\title{Explicit associator relations for multiple zeta values}
\author{Ismael Soud{\`e}res}
\thanks{I want to thank Leila Schneps, Pierre Cartier
  for their support, their kind attention to my work and all the
discussions that lead to this article. Benjamin
Collas was a great help in mastering the computer, relations
in the annexe being computed using Mathematica. I also want to thanks
particularly  Francis
Brown for all his important and useful comments.}
\address{Fachbereich Mathematik \\
Universität Duisburg-Essen, Campus Essen \\
Universitätsstrasse 2\\
45117 Essen\\
Germany \\
ismael.souderes@uni-due.de}
\date{\today}
\begin{document}
\maketitle
\begin{abstract} Associators were introduced by Drinfel'd in \cite{DrinQTQH} as
  a monodromy  
representation of a Knizhnik-Zamolodchikov equation.  Associators can be
briefly described as formal series in two non-commutative variables
satisfying three equations.  These three equations yield a large number
of algebraic relations between the coefficients of the series, a situation
which is particularly interesting in the case of the original Drinfel'd 
associator, whose coefficients are multiple zetas values.  In the first
part of this paper, we work out these algebraic relations among multiple
zeta values by direct use of the defining relations of associators. While 
well-known for the first two relations, the algebraic relations we obtain
for the third (pentagonal) relation, which are algorithmically explicit
although we do not have a closed formula, do not seem to have been 
previously written down.  The second part of the paper shows that if one has
an explicit basis for the bar-construction of the moduli space $\m_{0,5}$ of 
genus zero Riemann surfaces with $5$ marked points at one's disposal, then
the task of writing down the algebraic relations corresponding to the pentagon 
relation becomes significantly easier and more economical compared to the 
direct calculation above.
We discuss the explicit basis described by Brown, Gangl and Levin, which is dual to
the basis of the enveloping algebra of the braids Lie algebra $\UB$.

In order to write down the relation between multiple zeta values, we
then remark that it is enough to write down the relations associated to
elements that generate the bar construction as an algebra. This
corresponds to looking at the bar construction modulo shuffle, which is dual
to the Lie algebra of $5$-strand braids. We write down, in the
appendix, the associated algebraic relations between 
multiple zeta values in weights $2$ and $3$.
\end{abstract}

\tableofcontents

\section{Introduction}

In the first part of this introduction we recall the necessary
definitions concerning associators, and in the second part, we recall
the definitions and main results concerning multiple zeta values.  In the
third part, we give the outline of the paper and state the main results.
\subsection{Associators}
Let $k$ be a field of characteristic $0$. Let $\UF=k\llan X_0,X_1\rran$
be the ring of formal power series over $k$ in two non-commutative
variables. The coproduct $\Delta$ on $\UF$ is defined by 
\[
\Delta(X_0)=X_0 \ot 1 + 1 \ot X_0 \qquad \Delta(X_1)=X_1 \ot 1 + 1 \ot X_1.
\]
An element $\Phi= \Phi(X_0, X_1) \in \UF$ is \emph{group-like} if it
satisfies $\Delta(\Phi)=\Phi \hat{\ot}\Phi$ where $\hat{\ot}$ denotes
the complete tensor product. 
\begin{rem}We remark that the constant term of a
group-like element is $1$.
\end{rem}

\begin{defn}
If $S$ is a finite set, let $S^*$ denote the set of words with letters
in $S$, that is the dictionary over $S$. If $S=\{s_1,\ldots, s_n\}$
we may write $\{s_1,\ldots, s_n\}^*$.

Let $\ax$ be the dictionary over $\{X_0, X_1\}$.
\end{defn}
We remark that the monomials in $\UF$ are words in $\ax$; the empty word 
$\emptyset$ in $\ax$ will be $1$ by convention when considered in $\UF$. 
The following definition allows us to define a filtration on $\UF$.
\begin{defn}
The depth $\de(W)$ of a monomial   $W\in\UF$, that is an element of
$\ax$, is the number of $X_1$'s, and
its weight (or length) $\we(W)=|W|$ is the number of letters.
\end{defn}
The algebra $\UF$ is filtered by the weight, and its graded pieces of
weight $d$ are the subspaces generated by the monomials of length $d$;
$\UF$ is thus a graded algebra. 

Let $\UB$ be the enveloping algebra of $\mf B_5$, the completion (with respect 
to the natural grading) of the pure sphere braid Lie algebra \cite{IharaAPSBG}; that is,
$\UB$ is the quotient of $k\llan X_{ij}\rran $ with $1\leqs i \leqs
5$ and $1\leqs j \leqs 5$ by the relations
\begin{itemize}
\item $X_{ii}=0$ for $1 \leqs i \leqs 5$,
\item $X_{ij}=X_{ji}$ for $1 \leqs i,j \leqs 5$,
\item $\sum\limits_{j=1}^5 X_{ij}=0$ for $1 \leqs i \leqs 5$, 
\item $[X_{ij}, X_{kl}]=0$ if $\{i,j\}\cap\{k,l\}=\emptyset$.
\end{itemize} 

\begin{defn}[{Drinfel'd \cite{DrinQTQH}}]\label{ass}
A group-like element $\Phi$ in $\UF$ having coefficients equal to zero 
in degree $1$, together with an element $\mu \in k^*$, is 
an associator if it satisfies the following equations
\begin{gather}
\label{rel1}  \Phi(X_0,X_1)\Phi(X_1,X_0)=1 \tag{I}, \\
\label{rel2} \tag{II}
e^{\frac{\mu}{2}X_0}\Phi(X_{\infty},X_0)e^{\frac{\mu}{2} X_{\infty}}
\Phi(X_1,X_{\infty})e^{\frac{\mu}{2}X_1}\Phi(X_0,X_1)=1, 
\intertext{with $X_0+X_1+X_{\infty}=0$, and } 
\tag{III} \label{rel3}
\Phi(X_{12},X_{23}) \Phi(X_{34},X_{45}) \Phi(X_{51},X_{12})
\Phi(X_{23},X_{34}) \Phi(X_{45},X_{51})=1, 
\end{gather} 
where \eqref{rel3} takes place in $\UB$.

We will write an associator as
 \[
\Phi(X_0,X_1)=\sum_{W\in \ax}  Z_W W=1+\sum_{\substack{ W\in \ax\\ W\neq 
\emptyset}} Z_W W.
\] 
We have $Z_{\emptyset}=1$ because $\Phi$ is group-like.
\end{defn}
In \cite{DrinQTQH}, Drinfel'd gives an explicit associator $\pkz$ over $\C$, 
known as the Drinfel'd associator and associated to a
Knizhnik-Zamolodchikov equation (KZ equation). More precisely,
consider the KZ equation (one can also see \cite{FurushoMZVSDA}[\S 3]).
\begin{gather}
\tag{KZ} \frac{\partial g}{\partial u} = 
\left(\frac{X_0}{u}+\frac{X_1}{u-1}
\right)\cdot g(u) \label{KZ}
\end{gather}
where $g$ is an analytic function in one complex variable $u$ with
values in $\C\llan X_0,X_1\rran$ (analytic means that each coefficient
is an analytic function). This equation has singularities only
at $0$, $1$ and $\infty$. The equation \eqref{KZ} has a unique
solution on $C=\C\sm(]-\infty, 0] \cup [1,\infty[)$ having a specified
value at a given point in $C$, because $C$ is simply
connected.  Moreover, at $0$ (resp. $1$), there exists a unique solution
$g_0(u)$ (resp. $g_1(u)$)  such that 
\[
g_0(u)\sim u^{X_0} \quad (u \ra 0)
\qquad \left(\mx{resp. }
g_1(u)\sim (1-u)^{X_1} \quad (u \ra 1) \right).
\]
As $g_0$ and $g_1$ are invertible with specified asymptotic behavior,
they must coincide up to multiplication on the right by an
invertible element in $\C\llan X_0,X_1\rran$.
\begin{defn}
The \emph{Drinfel'd associator}
\footnote{In \cite{DrinQTQH}, Drinfel'd actually defined $\phi_{KZ}$
  rather than $\pkz$, where 
$\phi_{KZ}(X_0,X_1)=  \pkz(\frac{1}{2i\pi} X_0, \frac{1}{2i\pi}X_1)$ and is 
defined via the KZ equation $ \frac{\partial g}{\partial u} = \frac{1}{2i\pi}
\left(\frac{X_0}{u}+\frac{X_1}{u-1} 
\right)\cdot g(u)$.}
  $\pkz$ is the element in $\C\llan X_0,X_1\rran $ defined by 
\[
g_0(u)= g_1(u)\pkz(X_0,X_1).
\]
\end{defn}  
In \cite{DrinQTQH}, Drinfel'd proved the following result.
\begin{prop} The element $\pkz$ is a group-like element and it
  satisfies \eqref{rel1}, \eqref{rel2} with $\mu=2i\pi$, and \eqref{rel3}
  of definition \ref{ass}.  That is, 
\begin{gather}
\label{I}  \pkz(X_0,X_1)\pkz(X_1,X_0)=1 \tag{I\tsub{KZ}} 
\end{gather}
\begin{multline}
e^{i\pi X_0}\pkz(X_{\infty},X_0)e^{i\pi X_{\infty}}
\pkz(X_1,X_{\infty})e^{i\pi X_1}\pkz(X_0,X_1)=1 
\\ \mx{with}\quad X_0+X_1+X_{\infty}=0\tag{II\tsub{KZ}} \label{II}
\end{multline}
\begin{multline}
\pkz(X_{12},X_{23}) \pkz(X_{34},X_{45}) \pkz(X_{51},X_{12})
\pkz(X_{23},X_{34}) \\
\pkz(X_{45},X_{51})=1 
 \qquad \mx{in } \UB\tag{III\tsub{KZ}} .\label{III}
\end{multline} 
\end{prop}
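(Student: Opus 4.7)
\medskip

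\noindent\textit{Proof proposal.} The plan is to derive all four assertions — the group-like property and the relations \eqref{rel1}, \eqref{rel2}, \eqref{rel3} — from the \eqref{KZ} equation and its natural generalizations to flat connections on the moduli spaces $\m_{0,4}\cong\mathbb{P}^1\sm\{0,1,\infty\}$ and $\m_{0,5}$. For the group-like property, I would first show that $g_0$ itself is group-like. Extending the KZ connection diagonally to $\C\llan X_0,X_1\rran\hat{\ot}\C\llan X_0,X_1\rran$, one checks that both $g_0\hat{\ot} g_0$ and $\Delta\circ g_0$ satisfy the same first-order linear differential equation (using that $\Delta$ is an algebra homomorphism and $\Delta(X_i)=X_i\ot 1+1\ot X_i$) and share the same asymptotic behavior $u^{X_0\ot 1+1\ot X_0}$ at $u=0$. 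Uniqueness of the solution with this asymptotic then forces $\Delta(g_0)=g_0\hat{\ot} g_0$, and similarly for $g_1$. Since group-like elements form a group under multiplication, $\pkz=g_1^{-1}g_0$ is group-like.

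For \eqref{rel1}, I would exploit the symmetry $u\mapsto 1-u$: under this substitution the one-form $X_0\tfrac{du}{u}+X_1\tfrac{du}{u-1}$ is mapped (up to sign) to its analogue with $X_0$ and $X_1$ interchanged, and the singular points $0$ and $1$ are swapped. Tracking the effect on the canonical solutions $g_0$ and $g_1$ and comparing the two resulting expressions for $\pkz$ yields $\pkz(X_0,X_1)\pkz(X_1,X_0)=1$. For \eqref{rel2}, the strategy is to interpret each $\pkz$ factor as the regularized parallel transport of the KZ connection between tangential base points at two adjacent singular points of $\mathbb{P}^1\sm\{0,1,\infty\}$, and each exponential $e^{i\pi X_j}$ as a local half-monodromy around the corresponding singularity. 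The left-hand side of \eqref{rel2} is then the total monodromy of a composite loop that encircles each of $0$, $1$, $\infty$ once, which is contractible; hence this monodromy is the identity. The value $\mu=2i\pi$ is dictated by the normalization of $\pkz$ recalled in the footnote.

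The proof of \eqref{rel3} follows the same scheme on $\m_{0,5}$: one uses the KZ-type connection $\sum_{i<j}X_{ij}\,d\log(z_i-z_j)$, whose residues along the boundary divisors of the Deligne--Mumford compactification $\overline{\m}_{0,5}$ are precisely the generators $X_{ij}$, and which satisfies the infinitesimal braid relations imposed on $\UB$. The five factors $\pkz(X_{ij},X_{jk})$ correspond to the five regularized parallel transports between the corners of a pentagonal region in the real locus of $\m_{0,5}$, whose edges approach the five relevant adjacent boundary divisors. Since this pentagon is contractible, the total monodromy around its boundary is trivial, and no extra exponential factors appear because the path never encircles any divisor. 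The main obstacle is the careful bookkeeping at infinity: one must verify that the arguments $X_{ij}$ of each $\pkz$ factor correctly describe the local behavior of the connection near the corresponding boundary divisor, and that the chosen tangential base points and orientations yield exactly the five factors of \eqref{rel3} with no residual exponentials. This geometric input is precisely what makes the pentagon relation much more subtle than \eqref{rel1} and \eqref{rel2}, and is the starting point for the combinatorial analysis undertaken in the remainder of the present paper.
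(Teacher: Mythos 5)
\noindent Your outline is correct and follows essentially the same route as the paper: the proposition itself is quoted from Drinfel'd without proof, but the monodromy/parallel-transport picture you describe is exactly the one developed later in Section~\ref{section:bar} (Theorems \ref{IMZVgeom}, \ref{IIMZVgeom} and \ref{IIIMZVgeom}), where \eqref{I} comes from comparing the normalized solutions $g_0$ and $g_1$ via the symmetry $u\mapsto 1-u$, \eqref{II} from analytic continuation along a contractible path through the six tangential base points whose half-circles contribute the factors $e^{i\pi X_j}$, and \eqref{III} from comparing the five normalized solutions $L_i$ at the corners of the standard cell of $\m_{0,5}(\R)$ (with the identification of the local transports with the $\pkz(X_{ij},X_{jk})$ deferred, as in the paper, to Brown's and Drinfel'd's computations). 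One phrase to tighten: the hexagonal loop is contractible precisely because it makes only a \emph{half}-turn around each of $0$, $1$, $\infty$ (consistent with your half-monodromy factors $e^{i\pi X_j}$), whereas a loop that genuinely encircles each puncture once is not contractible in $\mathbb{P}^1\sm\{0,1,\infty\}$.
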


\subsection{Multiple zeta values}
For a $p$-tuple $\mathbf{k} = (k_1,\ldots , k_p)$ of strictly positive 
integers with $k_1 \geqs 2$, the  multiple zeta value $\zeta(\mathbf{k})$ is 
defined as
\[\zeta(\mathbf{k}) = \sum_{n_1>\ldots>n_p>0} \frac{1}{n_1^{k_1}\cdots
  n_p^{k_p}}.\] 
\begin{defn}
The depth of a $p$-tuple of integers $\mb k= (k_1, \ldots , k_p)$ is
$\de(\mb k)=p$, and its weight $\we(\mb k)$ is $ \we(\mb k)=k_1 +\cdots +k_p$.
\end{defn}

To the tuple of integers $\mathbf{k}$, with $n=\we(\mb k)$, we associate the
$n$-tuple $\ol k$ of $0$ and $1$ by:
\[\ol{k}=(\underbrace{0,\ldots , 0}_{k_1-1\mx{ times}}, 1,\ldots
  ,\underbrace{0,\ldots ,0}_{k_p-1\mx{ times}},
  1)=(\ve_n,\ldots,\ve_1)\]
and the word in $\{X_0, X_1\}^*$ 
\[
X_{\ve_n}\cdots X_{\ve_1}.
\]
This makes it possible to associate a multiple zeta value $\zeta(W)$
to each word $W$ in $X_0 \{X_0,X_1\}^* X_1$ (where $W$ begins with $X_0$
and ends with $X_1$).

Following Kontsevich and Drinfel'd, one can write the multiple zeta values as a Chen
iterated integral \cite{IIDFLSChen}
\[
\zeta(\mb k )= \int_0^1 (-1)^p\frac{\du}{u- \ve_n} \circ \cdots \circ
\frac{\du}{u- \ve_1}. 
\]
Note that, as $k_1 \geqs 2 $, we have $\ve_n =0$. This expression as
an iterated integral leads directly to an expression of the multiple zeta values as
an integral over a simplex
\[
\zeta(\mb k )= \int_{\Delta_n}  (-1)^p\fr{\dt_1}{t_1-\ve_1}\w 
\cdots\w \fr{\dt_n}{t_n-\ve_n}
\]
where $\Delta_n=\{0 <t_1 <\ldots <t_n< 1\}$.

Thanks to the work of Boutet-de-Monvelle, Ecalle, Gonzales-Lorca and Zagier,
with the further developments by Ihara, Kaneko or Furusho,  we can
extend the definition of multiple zeta values to tuples without the condition $k_1\geqs
2$ (see \cite{GonLorca}, \cite{DMRRac}, \cite{IKZ} or \cite{FurushoMZVSDA}).  These
extended multiple zeta values are called regularized multiple zeta values, and we speak of
regularizations. We will be interested in a specific regularization, the
\emph{shuffle regularization}.
\begin{defn}[Shuffle product] A shuffle of $\{1,2, \ldots, n\}$ and
  $\{1, \ldots, m\}$ is a permutation $\sigma$ of $\{1,2,\ldots, n+m\}$ such
  that:
\[
\sigma(1)<\sigma(2)<\cdots <\sigma(n) 
\qquad \mx{and} \mx \qquad
\sigma(n+1)<\sigma(n+2)<\cdots <\sigma(n+m). 
\]
The set of all the shuffles of $\{1,2, \ldots, n\}$ and
  $\{1, \ldots, m\}$ is denoted by $\on{sh}(n,m)$

Let $V=X_{i_1}\cdots X_{i_n}$ and $W=X_{i_{n+1}}\cdots X_{i_{n+m}}$ be
two words in $\ax$. The shuffle of $V$ and $W$ is the
collection of words 
\[
\on{sh}(V, W) = (X_{i_{\sigma^{-1}(1)}}X_{i_{\sigma^{-1}(2)}} \cdots
X_{i_{\sigma^{-1}(n+m)}} )_{\sigma \in \on{sh}(n,m)}.
\]
Working in $\C\llan X_0,X_1\rran$, we will also consider the sum 
\[
V\sha W =\sum_{U\in \on{sh}(V, W) }U=
\sum_{\sigma \in \on{sh}(n,m)}
X_{i_{\sigma^{-1}(1)}}X_{i_{\sigma^{-1}(2)}} \cdots
X_{i_{\sigma^{-1}(n+m)}} 
\]
and extend the shuffle product $\sha$ by linearity.
\end{defn}

\begin{defn} The \emph{shuffle regularization} of the multiple zeta
  values is the 
  collection of real numbers $\big(\zsha(W)\big)_{W\in\ax}$
  such that:
\begin{enumerate}
\item $\zsha(X_0)=\zsha(X_1)=0$,
\item $\zsha(W)=\zeta(W)$ for all $W\in \xay $,
\item $\ds \zsha(V)\zsha(W)=\sum_{U\in \on{sh}(V,W)}\zsha(U)$ for all
  $V,W \in \ax$
\end{enumerate}
\end{defn}
These regularized multiple zeta values $\zsha(W)$, for $W$ not in $\xay$,
are in fact linear combinations of the usual multiple zeta values, which were
given explicitly by Furusho in \cite{FurushoMZVSDA}. Seeing
$\zsha$ as a linear map from $\C\llan X_0,X_1\rran$ to $\R$, one can then
rewrite the third condition as 
\[
\zsha(V\sha W)=\zsha(V)\zsha(W).
\]

The coefficients of the Drinfel'd associator can be written in an
explicit way using convergent multiple zeta values
\cite{FurushoMZVSDA}.
\begin{prop} Using the shuffle regularization we can write
(\cite{TTQMurakamiKIKP}, \cite{GonLorca}, \cite{FurushoMZVSDA})
\[
\pkz(X_0,X_1)=\sum_{W\in\ax} (-1)^{\de(W)}\zsha(W)W.
\]
\end{prop}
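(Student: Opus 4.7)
My plan is to identify each coefficient $Z_W$ of $\pkz$ with a regularized iterated integral along $[0,1]$ of the forms $\omega_0=\frac{du}{u}$ and $\omega_1=\frac{du}{u-1}$, and then to extend the identification from the convergent words to all of $\ax$ via the shuffle relation. Writing $g_0(u)=u^{X_0}h_0(u)$ with $h_0$ holomorphic at $0$ and $h_0(0)=1$ turns \eqref{KZ} into a regular ODE for $h_0$, solved by Picard iteration into a formal series over words $W=X_{\varepsilon_n}\cdots X_{\varepsilon_1}$ whose coefficient is the iterated integral $\int_0^u \omega_{\varepsilon_n}\circ\cdots\circ\omega_{\varepsilon_1}$, suitably regularized to absorb the $\log u$ singularities that appear when $\varepsilon_n=1$. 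A mirror-image argument near $u=1$ yields $g_1(u)=(1-u)^{X_1}h_1(u)$, and the defining identity $g_0=g_1\pkz$ then expresses each $Z_W$ as a regularized iterated integral from $0$ to $1$.

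For a convergent word $W\in X_0\ax X_1$ no regularization is needed. Using $\omega_1=-\frac{du}{1-u}$ together with the Chen integral formula for $\zeta(\mathbf{k})$ recalled in the excerpt, one gets
\[
Z_W=\int_0^1 \omega_{\varepsilon_n}\circ\cdots\circ\omega_{\varepsilon_1}=(-1)^{\de(W)}\zeta(W)=(-1)^{\de(W)}\zsha(W),
\]
where the sign packages the $\de(W)$ minus signs coming from the factors $\omega_1=-\frac{du}{1-u}$. In addition $Z_{X_0}=Z_{X_1}=0$, the first by the vanishing of the degree-one part of $\pkz$ imposed in the associator axioms, the second matching $\zsha(X_0)=\zsha(X_1)=0$.

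To propagate the identity to arbitrary $W\in\ax$, I invoke the shuffle relation. The coproduct $\Delta$ on $\UF$ is dual to the shuffle product, so group-likeness of $\pkz$ is equivalent to the family $(Z_W)_{W\in\ax}$ being a character of the shuffle algebra $(k\langle X_0,X_1\rangle,\sha)$. The map $W\mapsto(-1)^{\de(W)}\zsha(W)$ is likewise a shuffle character, because $\de$ is additive under shuffle (every term of $V\sha W$ has depth $\de(V)+\de(W)$) and $\zsha(V\sha W)=\zsha(V)\zsha(W)$ by definition. Two shuffle characters that agree on a set of algebra generators of the free shuffle algebra coincide; and every Lyndon word in $\{X_0,X_1\}^*$ is either $X_0$, $X_1$, or, having length $\geqs 2$, starts with $X_0$ and ends with $X_1$ and so lies in $X_0\ax X_1$. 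Equality has already been established on each of these, hence it holds everywhere.

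The main obstacle is making the iterated integral expansion rigorous in Step~1: one must define the regularization carefully enough that the ratio $g_1^{-1}g_0$ is a well-defined element of $\UF$ and that its coefficients actually form a shuffle character. This is cleanly handled by the tangential base point formalism (or equivalently by truncating at $\varepsilon$ and $1-\varepsilon$ and extracting the constant term in $\log\varepsilon$, as in Le--Murakami); once this is granted, the shuffle character argument reduces the entire identification to the convergent regime, which was treated in Step~2.
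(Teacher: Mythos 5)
The paper does not prove this proposition: it is quoted with references to Le--Murakami, Gonzalez-Lorca and Furusho, and your argument is essentially the proof found in those sources. You identify the coefficients of $\pkz$ with regularized iterated integrals of $\frac{du}{u}$ and $\frac{du}{u-1}$ over $[0,1]$, check the identity on convergent words where it reduces to the Chen integral formula for $\zeta(\mathbf{k})$ (with the sign $(-1)^{\de(W)}$ accounting for the factors $\frac{du}{u-1}=-\frac{du}{1-u}$), and propagate to all of $\ax$ by observing that both $W\mapsto Z_W$ and $W\mapsto(-1)^{\de(W)}\zsha(W)$ are characters of the shuffle algebra, which is a free polynomial algebra on Lyndon words, every Lyndon word of length at least $2$ being convergent. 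That reduction is clean and correct. The only slip is one of convention: with the KZ equation written as $g'=\bigl(\frac{X_0}{u}+\frac{X_1}{u-1}\bigr)g$, the normalized solution factors as $g_0(u)=h_0(u)\,u^{X_0}$ with $h_0$ holomorphic and $h_0(0)=1$ (the singular term then becomes the commutator $\frac{1}{u}[X_0,h_0]$, which is regular since $h_0-1=O(u)$), not $u^{X_0}h_0(u)$, for which the conjugate $u^{-X_0}X_1u^{X_0}$ would reintroduce powers of $\log u$. This does not affect the structure of the argument, and the genuinely technical point --- that the regularized coefficients of $g_1^{-1}g_0$ are well defined and form a shuffle character --- is exactly what you correctly identify and defer to the tangential base point (or $\log\varepsilon$-truncation) formalism of the cited references.
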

\subsection{Main results}
In Theorem \ref{rel1coef} and Theorem \ref{rel2coef} we will give
explicit relations between the 
coefficients of the series defining an associator $\Phi$ equivalent to
the relation \eqref{rel1} and \eqref{rel2} satisfied by $\Phi$. Both
were well-known, as it is easy to expand the product of the associators in
$\UF$, even if the author does not know whether the relations of Theorem
\ref{rel2coef} have actually appeared explicitly in the literature. In the
case of the pentagon relation \eqref{rel3}, writing down relations
between the coefficients implies fixing a basis $B$ of $\UB$. Even if
fixing such a basis breaks the natural symmetry of the pentagon
relation \eqref{rel3}, it makes it possible to give an explicit family of
relations between the coefficients of $\Phi$ equivalent to
\eqref{III}. More precisely, decomposing a word $W$ in the subset of letters
$X_{34},X_{45},X_{24},X_{12},X_{23} $ in the basis $B$ we have 
\[
W=\sum_{b\in B} l_{b,W} b,
\]
and we obtain the following theorem.
\begin{thma}[Theorem \ref{rel3coef}]
The relation \eqref{rel3} is equivalent to the family of relations
\begin{flalign*}
\forall b \in B\, \, (b\neq 1) & \qquad \qquad 
\sum_{W\in \{X_{34},X_{45},X_{24},X_{12},X_{23}\}^*} l_{b,W} C_{5,W} =0, & &
\end{flalign*}
where $C_{5,W}$ are explicitly given by: 
\[
C_{5,W}=\sum_{\substack{U_1,\ldots,U_5 \in \axb \\ U_1\cdots U_5=W}}
Z_{\pa(U_1)}Z_{\pb(U_2)}Z_{\pc(U_3)}Z_{\pd(U_4)}Z_{\pe(U_5)}.
\]
In the above formula, $\axb$ denotes $ \{X_{34},X_{45},X_{24},X_{12},X_{23}\}^*$
 and the $\pg$ are maps from $\UB$ to $\UF$ defined on the letters
$X_{12}$, $X_{23}$,  $X_{34}$, $X_{45}$, $X_{24}$ in
Definition \ref{pgdef} (as example: $\pa(X_{12})=X_0$, $\pa(X_{23})=X_1$ and
$\pa(X_{34})=\pa(X_{45})=\pa(X_{24})=0$) with the convention that $Z_0=0$.
\end{thma}
Applying this theorem to the particular basis $B_4$ coming from the
identification 
\[
\UB\simeq k\llan X_{34},X_{45},X_{24} \rran \rtimes k\llan
X_{12},X_{23}\rran,  
\]
one can compute the coefficients $l_{b,W}$ using the equation defining
$\UB$ (here $\rtimes$ denotes the complete semi-direct product). In particular it
is easy to see that $l_{b,W}$ is in $\Z$ in that case. As shown by Ihara in the
Lie algebra setting (\cite{IharaAPSBG}), the above identification
is induced by the morphism $ f_4 : \UB \lra \UF$ that sends $X_{i4}$ to $0$,
$X_{12}$ to $X_0$ and $X_{23}$ to $X_1$ and by a particular choice of generators
of the kernel (that is $X_{24}$, $X_{34}$ and $X_{45}$).

After explaining each family of relations between the coefficients, we apply our
results to the particular case of the Drinfel'd associator and give
the corresponding family between multiple zeta values in equations
\eqref{IMZV}, \eqref{IIMZV} and \eqref{IIIMZV}.

In Section \ref{section:bar} of the article, we explain how  these
families of relations between multiple 
zeta values are induced by iterated integrals on $\m_{0,4}$ and
$\m_{0,5}$ using the bar construction studied by Brown in
\cite{BrownMZVPMS}. The geometry of $\m_{0,5}$ allows us in
Proposition \ref{propitiC5W} to interpret the coefficients $C_{5,W}$
using iterated integrals.
\begin{propa}[{Proposition \ref{propitiC5W}}]
For any bar symbol $\om_W$ dual to a word $W$ in the letters $X_{34}$,
$X_{45}$, $X_{24}$, $X_{12}$, $X_{23}$,
we have
\[
C_{5,W}=\iti_{\gamma} \Reg(\om_{W},\gamma)
\]
where $\Reg(\om, D)$ is the regularization of a bar symbol in
$\oplus\HH^1(\m_{0,5})^{\otimes n}$ along boundary components $D
\subset \partial \m_{0,5}$ and where $\gamma$ is a path around the standard cell
of $\m_{0,5}(\R)$.
\end{propa}
This is a consequence of Theorem \ref{IIIMZVgeom} which links 
the family of relations \eqref{IIIMZV} to the bar construction.
\begin{thma}[{Theorem \ref{IIIMZVgeom}}]
The relation \eqref{III} is equivalent to the family of relations
\[
\forall b_4 \in B_4 \qquad \iti_{\gamma}\Reg(b_{4}^*,\gamma)=0
\]
which is exactly the family of relations \eqref{IIIMZV}. Here 
$(b_4^*)_{b_4 \in  B_4}$ denotes the basis in $V(\m_{0,5})$, the bar
construction on $\m_{0,5}$, dual to the basis $B_4$ of $\UB$ described earlier.
\end{thma}

More generally, we then have that for any basis $F=(f)_{f\in F}$
on $V(\m_{0,5})$, the pentagon relation \eqref{III} is equivalent to 
\[
\forall f \in F \qquad \iti_{\gamma}\Reg(f,\gamma)=0.
\]

Using different methods, and for another purpose, Brown, Gangl and Levin in
\cite{BroGan10} obtain 
the same basis $B_4^*$ of $V(\m_{0,5})$. In their work, the
basis $B_4^*$ is described using combinatorial objects. More precisely, they
use maximal triangulations of rooted decorated polygons.

Instead of looking at all the elements of a basis $F$ of $V(\m_{0,5})$, it
is enough to consider only a subset of $F$ that generates  $V(\m_{0,5})$ as
a shuffle algebra. Indeed, if $\om$ in $V(\m_{0,5})$ is equal to
$f_1\sha f_2$, the iterated integral $\iti_{\gamma} \om$ is equal to
$\iti_{\gamma} f_1 \iti_{\gamma} f_2$. Thus it does not give a new
relation between multiple zeta values. Considering a set of generators
of the shuffle algebra leads to computing many less relations. In degrees
$2$ and $3$ we have respectively $4$ and $10$ generators instead of
$19$ and $65$ elements in the vector space basis. In the appendix, we will give
these relations in degrees $2$ and $3$ using the basis $B_4$. 

The multiplicative generators 
that we have found do not have  a particularly simple expression in terms of
symbols $\om_{W}$ dual to words $W$ in the letters $X_{34}$, $X_{45}$, $X_{24}$,
$X_{12}$, $X_{23}$. But it seems to be linked with our particular choice of
identification. Indeed, using $X_{14}$, $X_{24}$ and $X_{34}$ as generators of
the kernel of $f_4 : \UB \lra \UF$ leads to an other identification:
\[
\UB \simeq  k\llan X_{34},X_{14},X_{24} \rran \rtimes k\llan
X_{12},X_{23}\rran
\] and to another basis $\tilde B_4$ of $\UB$. Then, multiplicative generators 
 can be found with a particularly simple expression in terms of
symbols $\om_{W}$ dual to words $W$ in the letters $X_{34}$, $X_{14}$, $X_{24}$,
$X_{12}$, $X_{23}$. More precisely, writing such a word $W$ as 
\[
W= \sum_{\tilde b_4\in \tilde B_4} l_{\tilde b_4, W} \tilde b_4,
\]
we can write $\tilde b_4^*=\sum_{W} l_{\tilde b_4,W} \om_W$. The multiplicative
generators in low degree are
 elements $\tilde b_4^*$ such that the number of $l_{\tilde b_4,W}$ is as minimal as possible.
This seems to be a general fact.

\section{Combinatorial description of associator relations}
The goal of this section is, for any associator and for the particular
case of $\pkz$, to give an explicit expression for the relations between the 
coefficients derived from the associator relations \eqref{rel1}, \eqref{rel2} 
and \eqref{rel3}. For each of these relations, we will first study the case of 
a general associator and then deduce, for the Drinfel'd associator, relations
between the regularized multiple zeta values.  Let 
\[
\Phi= \sum_{W\in \ax}Z_W W
\]
be an associator. The idea will be to expand the product in the right
hand side of the equations \eqref{rel1}, \eqref{rel2} and
\eqref{rel3} in 
a suitable basis of the space $\UF$ or $\UB$.  Both $\UF$ and $\UB$ can be seen
as a completion of 
  polynomial algebras. Precisely, $\UF$ is the completion of $k\langle X_0,X_1\rangle$, the
  polynomial algebra over $k$ in two non-commutative variables, with respect to
  the ideal generated by $X_0$ and $X_1$. The algebra $\UB$ is the completion
  with respect to the ideal generated by the $X_{ij}$ of the polynomial algebra
  $k\langle X_{ij}\rangle / \mc R$ with $1\leqs i,j \leqs 5$ and where $\mc R$ denotes the
  following relations:
\begin{itemize}
\item $X_{ii}=0$ for $1 \leqs i \leqs 5$,
\item $X_{ij}=X_{ji}$ for $1 \leqs i,j \leqs 5$,
\item $\sum\limits_{j=1}^5 X_{ij}=0$ for $1 \leqs i \leqs 5$, 
\item $[X_{ij}, X_{kl}]=0$ if $\{i,j\}\cap\{k,l\}=\emptyset$.
\end{itemize} 
\begin{defn}\label{defnbasis}
A \emph{basis} $B=(b)_{b \in B}$ of $\UF$ (resp. $\UB$)
  will denote a basis of the underlying vector space of the polynomial algebra
  $k\langle X_0,X_1\rangle$ (resp. $k\langle X_{ij}\rangle/\mc R$) such that 
\begin{itemize}
\item Any element $\Psi$ in $\UF$ (resp. $\UB$) can be uniquely written as a series
\[
\Psi=\sum_{b \in B} a_b b.
\]
\item The elements $b$ in $B$ are homogeneous.
\end{itemize}
\end{defn}
Speaking of a basis of $\UF$ or $\UB$, we will always mean a basis as in the
above definition.
\begin{rem}\label{rembasis} Let $B$ be a basis (as above) of $\UF$
  (resp. $\UB$). Assumptions in Definition \ref{defnbasis} ensure that $1$
  is in $B$ and
\begin{itemize}
\item Any $W$ in $ \ax$ (resp. a word in the letters $X_{ij}$) can be uniquely written
  as
\[
W=\sum_{b \in B} l_{b,W} b \qquad \mx{in }\UF \quad (\mx{resp. in }\UB).
\]
\item Given such a decomposition for $W$, only finitely many $l_{b,W}$ are non
  zero when $b$ runs through $B$.
\item Fixing $b$, only finitely many $l_{b,W}$ are non zero  when $W$ runs
  through $\ax$ (resp. runs through the words in the letters $X_{ij}$).
\end{itemize} 
\end{rem}
\subsection{The symmetry, \eqref{rel1} and \eqref{I}}
Let $P_2$ be
the product 
\[P_2=\Phi(X_0,X_1)\Phi(X_1,X_0).\]
As the monomials in $\UF$,
i.e. the words in $\ax$, form a basis of $\UF$, we can write $P_2$ as 
\[
P_2=\sum_{W\in \ax}C_{2,W} W=1+\sum_{W\in \ax,\, W\neq
  \emptyset}C_{2,W} W. 
\]
The relation \eqref{rel1} tells us that for each $W \in \ax$, $W$ being
nonempty, we have
\begin{equation}\label{rel1Cw}
C_{2,W}=0.
\end{equation}
\begin{exm}In low degree we have the following relations:
\begin{itemize}
\item In degree one, there are just $2$ words: $X_0$ and $X_1$ and
  \eqref{rel1Cw} gives: 
\begin{align*}
C_{2,X_0}=Z_{X_0}+Z_{X_1}=0 \\
C_{2,X_1}=Z_{X_1}+Z_{X_0}=0
\end{align*}
\item In degree two there are $4$ words $X_0X_0$, $X_0X_1$, $X_1X_0$ and
  $X_1X_1$ and \eqref{rel1Cw} gives:
\begin{align*}
C_{2,X_0X_0}=Z_{X_0X_0}+Z_{X_0}Z_{X_1}+Z_{X_1X_1}=0\\
C_{2,X_0X_1}=Z_{X_0X_1}+Z_{X_0}Z_{X_0}+Z_{X_1X_0}=0\\
C_{2,X_1X_0}=Z_{X_1X_0}+Z_{X_1}Z_{X_1}+Z_{X_0X_1}=0\\
C_{2,X_0X_0}=Z_{X_1X_1}+Z_{X_1}Z_{X_0}+Z_{X_0X_0}=0
\end{align*}
\item In degree three there are $8$ words. Looking at the coefficients of the 
words
  $X_0X_0X_1$ in $P_2$, equation \eqref{rel1Cw} gives:
\[
Z_{X_0X_0X_1} +Z_{X_0X_0}Z_{X_0}+Z_{X_0}Z_{X_1X_0}+Z_{X_1X_1X_0}=0
\] 
\end{itemize}
\end{exm}
Let $ \theta $ be the automorphism of $\UF$ that sends $X_0$ to $X_1$ and $X_1$
to $X_0$. Then we have:
\begin{thm}\label{rel1coef} 
The relation \eqref{rel1} is equivalent to the family of relations
\begin{equation}\label{rel1coefeq}
\forall W \in \ax\sm \{\emptyset\}, \qquad \qquad
\sum_{\substack{U_1,U_2 \in \ax\\U_1U_2=W}}Z_{U_1}Z_{\theta(U_2)}=0.  
\end{equation}
\end{thm}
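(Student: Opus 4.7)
The plan is to expand the product $P_2=\Phi(X_0,X_1)\Phi(X_1,X_0)$ directly in the monomial basis of $\UF$ and read off the coefficient of each word. Since the words in $\ax$ form a basis of $\UF$, the equation $P_2=1$ is equivalent to the vanishing of $C_{2,W}$ for every nonempty $W$ together with $C_{2,\emptyset}=1$, so the whole content of the theorem is to identify $C_{2,W}$ as the sum on the right-hand side of \eqref{rel1coefeq}.

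First I would observe that swapping $X_0$ and $X_1$ in $\Phi$ is nothing but applying the involution $\theta$ word by word, so that
\[
\Phi(X_1,X_0)=\sum_{U\in\ax} Z_U\,\theta(U).
\]
Multiplying this by $\Phi(X_0,X_1)=\sum_{U_1\in\ax}Z_{U_1}U_1$ yields
\[
P_2=\sum_{U_1,U\in\ax} Z_{U_1}Z_U\,U_1\theta(U),
\]
and since $\theta$ preserves word length and is bijective on words, grouping by the resulting monomial $W=U_1\theta(U)$ amounts to the change of summation index $U_2:=\theta(U)$. This gives the closed formula
\[
C_{2,W}=\sum_{\substack{U_1,U_2\in\ax\\ U_1U_2=W}} Z_{U_1}Z_{\theta(U_2)}.
\]
For $W=\emptyset$ only the trivial factorization contributes and $C_{2,\emptyset}=Z_\emptyset^2=1$, in agreement with the constant term of $1$. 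For every nonempty $W$, the relation \eqref{rel1} is equivalent to $C_{2,W}=0$, which is exactly \eqref{rel1coefeq}.

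There is no real obstacle here: the proof is an expansion in a basis, and the only point requiring a moment of care is the interchange of sums, which is justified because for each fixed $W$ only finitely many factorizations $W=U_1U_2$ occur. The slight bit of combinatorial bookkeeping — rewriting $\Phi(X_1,X_0)$ via $\theta$ and reindexing the second factor — is what makes the involution $\theta$ appear in the stated relations.
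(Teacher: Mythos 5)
Your argument is correct and follows essentially the same route as the paper: you expand $P_2$ in the monomial basis of $\UF$ and use the involution $\theta$ to reindex so that the coefficient of $W$ becomes $\sum_{U_1U_2=W}Z_{U_1}Z_{\theta(U_2)}$ (the paper performs the reindexing on $\Phi(X_1,X_0)$ before multiplying, you do it after, which is the same computation). No gaps; the finiteness of factorizations of each fixed $W$ is correctly noted as the only point needing care.
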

\begin{proof}
As $\Phi(X_1,X_0)= \theta(\Phi(X_0,X_1))$, we have 
\begin{align*}
\Phi(X_1,X_0)&=\theta\left( 1+\sum_{\substack{W \in \ax \\ W\neq \emptyset}}
Z_WW
\right)
= 1+\sum_{\substack{W \in \ax \\ W\neq \emptyset}}
Z_W\theta(W)\\
&
= 1+\sum_{\substack{W \in \ax \\ W\neq \emptyset}}
Z_{\theta(W)}W.\\
\end{align*}
Then, expanding the product $P_2$ and reorganizing, we have 
\begin{align*}
\Phi(X_0,X_1)\Phi(X_1,X_0)&=\left( 
1+\sum_{\substack{U_1 \in \ax \\ U_1\neq \emptyset}}
Z_{U_1}U_1
\right)\left(
1+\sum_{\substack{U_2 \in \ax \\ U_2\neq \emptyset}}
Z_{\theta(U_2)}U_2
\right)\\
&=1+\sum_{\substack{W \in \ax \\ W\neq \emptyset}} \left( 
\sum_{\substack{U_1,U_2 \in \ax\\U_1U_2=W}}Z_{U_1}Z_{\theta(U_2)}
\right)W.
\end{align*}
\end{proof}
\begin{coro}\label{IMZVcomb}
The relation \eqref{I} is equivalent to the family of relations 
\begin{multline}
\forall W \in \ax, \, W\neq \emptyset, \quad \\
\sum_{\substack{U_1,U_2 \in \ax\\U_1U_2=W}} (-1)^{\de(U_1)}\zsha(U_1)
(-1)^{\de(\theta(U_2))}\zsha(\theta(U_2)) =0,\label{IMZV}
\end{multline}
that family being equivalent to the following
\begin{equation}
\forall W \in \ax, \, W\neq \emptyset,  \quad
 \sum_{\substack{U_1,U_2 \in \ax\\U_1U_2=W}}(-1)^{|U_2|} \zsha(U_1)
\zsha(\theta(U_2)) =0.
\end{equation}
\end{coro}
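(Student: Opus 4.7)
The plan is to specialize Theorem \ref{rel1coef} to the Drinfel'd associator $\pkz$ and then perform a sign manipulation to obtain the second, cleaner form of the family.

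First, I recall that the coefficients of $\pkz$ in the basis of words of $\ax$ are $Z_W = (-1)^{\de(W)}\zsha(W)$, as stated in the proposition just before the \emph{Main results} subsection. Since \eqref{I} is exactly the instance of \eqref{rel1} satisfied by $\pkz$, substituting these coefficients into the relations \eqref{rel1coefeq} furnished by Theorem \ref{rel1coef} immediately yields the first family \eqref{IMZV}. Conversely, if the first family holds, it is precisely the family \eqref{rel1coefeq} evaluated at $Z_W = (-1)^{\de(W)}\zsha(W)$, and Theorem \ref{rel1coef} then asserts that $\pkz(X_0,X_1)\pkz(X_1,X_0)=1$, i.e.\ \eqref{I}.

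Next, I would show that the two displayed families in the corollary are equivalent by a sign computation. Since $\theta$ swaps $X_0$ and $X_1$, the number of $X_1$'s in $\theta(U_2)$ equals the number of $X_0$'s in $U_2$, so
\[
\de(\theta(U_2)) = |U_2| - \de(U_2).
\]
Using the decomposition $W=U_1 U_2$, we also have $\de(U_1)+\de(U_2)=\de(W)$, and therefore
\[
\de(U_1) + \de(\theta(U_2)) \;=\; \de(U_1) + |U_2| - \de(U_2) \;=\; \de(W) - 2\de(U_2) + |U_2|,
\]
whence $(-1)^{\de(U_1)+\de(\theta(U_2))} = (-1)^{\de(W)}(-1)^{|U_2|}$. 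Consequently, each summand of \eqref{IMZV} equals $(-1)^{\de(W)}$ times the corresponding summand of the second family. Since the global factor $(-1)^{\de(W)}$ depends only on $W$ and not on the decomposition $U_1 U_2 = W$, it can be pulled out of the sum, and the vanishing of one sum is equivalent to the vanishing of the other for every nonempty $W \in \ax$.

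There is no genuine obstacle here: the corollary is a direct specialization of Theorem \ref{rel1coef} followed by a parity bookkeeping, and the only care required is to correctly track the signs coming from $\de$ versus length under the involution $\theta$.
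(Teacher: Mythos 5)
Your proposal is correct and matches the argument the paper intends (the corollary is stated without proof, as an immediate specialization of Theorem \ref{rel1coef} to $\pkz$ using $Z_W=(-1)^{\de(W)}\zsha(W)$). Your sign bookkeeping $\de(\theta(U_2))=|U_2|-\de(U_2)$ and $\de(U_1)+\de(U_2)=\de(W)$, giving the global factor $(-1)^{\de(W)}$ relating the two families, is exactly right.
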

\begin{rem}If $W=X_{\ve_1}\cdots X_{\ve_n}$ is a word in $\ax$, we define
  $\st{\leftarrow}{W}$ to  be the word $W=X_{\ve_n}\cdots X_{\ve_1}$. One can
  then check that the family of relations 
  \eqref{IMZV} (and thus \eqref{I}) is implied by the following:
\begin{enumerate}
\item \emph{Shuffle relations}: 
\[
\text{for all }V\text{  and }W\text{ in }\ax,\qquad \zsha(V \sha
  W)=\zsha(V)\zsha(W).
\] 
\item \emph{Duality relations \cite{Ohnodual, Zagierdual}}:
\[
\text{ for all }W\text{ in }\ax,\qquad 
\zsha(W)=\zsha(\st{\leftarrow}{\theta(W)}).
\] 
\end{enumerate}

The author does not know whether one can deduce the duality relations from the
double shuffle relations. 

The duality relations may be derived from \eqref{I}, that is
\[
\pkz(X_0,X_1)\pkz(X_1,X_0)=1,
\]
and  correspond geometrically to a change of variables $t_i=1-u_i$ in the iterated
integral representation of the multiple zeta values. In order to recover
duality relations directly from \eqref{rel1} and the group-like property, the
argument goes as follows. We want to show that a non-commutative power series in $\UF$
\[
\Phi(X_0,X_1)=1+\sum_{W\in \ax \sm\{
  \emptyset\}}C_W W 
\] 
 which is a group-like element and satisfies the $2$-cycle equation 
\[
\Phi(X_0,X_1)\Phi(X_1,X_0)=1
\]
has coefficients that satisfy the duality relations
\begin{equation}\label{generalduality}
\forall W \in \ax, \quad W\neq \emptyset, \qquad \qquad 
C_{\st{\leftarrow}{\theta(W)}}=(-1)^{\we(W)}C_W.
\end{equation}

Applying this result to the Drinfel'd associator, that is for
\[
C_W=(-1)^{\de(W)}\zsha(W),
\] one derives from \eqref{I} the duality relations
for the multiple zeta values, that is 
\[
 \forall W\, \in \ax\sm\{\emptyset\}, \qquad \qquad 
\zsha(W)=\zsha(\st{\leftarrow}{\theta(W)}). 
\]

To obtain the set of relations \eqref{generalduality}, one should first remark
that  
\begin{align*}
\Phi(X_0,X_1)^{-1}&=1+\sum_{W\in \ax \sm\{\emptyset\}}(-1)^{\we(W)}C_W
\st{\leftarrow}{W}\\
&=1+\sum_{W\in \ax \sm\{\emptyset\}}(-1)^{\we(\st{\leftarrow}{W})}C_{\st{\leftarrow}{W}} W.
\end{align*}
As the group elements are Zariski dense in the group-like elements, one has the above
equality because the inverse of a group element $g=e^{\ve_1X_{i_1}}\cdots
e^{\ve_nX_{i_n}}$, with $X_{i_k}$ in $\{X_0, X_1\}$ and $\ve_i$ in $\{\pm 1\}$, is given by
$g^{-1}=e^{-\ve_nX_{i_n}}\cdots e^{-\ve_1X_{i_1}}$. Then, as 
\[
\Phi(X_1, X_0)=1+\sum_{W\in \ax \sm\{\emptyset\}}C_W\theta(W)
=1+\sum_{W\in \ax \sm\{\emptyset\}}C_{\theta(W)}W,
\]
using the $2$-cycle equation
\eqref{rel1} written as $\Phi(X_1, X_0)=\Phi(X_0, X_1)^{-1} $, one obtains
\[
\forall W \in \ax, \quad W\neq \emptyset, \qquad \qquad 
C_{\theta(W)}=(-1)^{\we(\st{\leftarrow}{W})}C_{\st{\leftarrow}{W}}.
\]
The above set of relations is equivalent to the duality relations \eqref{generalduality}.

\end{rem}
\subsection{The $3$-cycle or the hexagon relation, \eqref{rel2} and
  \eqref{II}} 
For any element $P=\sum_{W\in \ax}a_W W$ in $\UF$, let $\cax(P|W)$ be the
coefficient $a_W$ of the monomial $W$.

Let $P_3$ be the product 
\[P_3= 
e^{\frac{\mu}{2}X_0}\Phi(X_{\infty},X_0)e^{\frac{\mu}{2} X_{\infty}}
\Phi(X_1,X_{\infty})e^{\frac{\mu}{2}X_1}\Phi(X_0,X_1).
\] 
We can write $P_3$ as 
\[
P_3=\sum_{W\in \ax}\cax(P_3|W) W=\sum_{W\in \ax}C_{3,W} W.
\]
The relation \eqref{rel2} tells us that for each $W \in \ax$, $W\neq
\emptyset$,  we have 
\begin{equation}\label{rel2Cw}
C_{3,W}=0.
\end{equation}
In order to make these coefficients explicit, we will need some definitions.
\begin{defn}Let $\alpha_0$ (resp. $\alpha_1$ and $\alpha_{\infty}$) be the
  endomorphism of $\UF$ defined on $X_0$ and $X_1$ by:
\[
\alpha_0(X_0)=X_0 \qquad \mx{and} \qquad \alpha_0(X_1)=0, 
\] 
respectively 
\[
\alpha_1(X_0)=0 \qquad \mx{and} \qquad \alpha_1(X_1)=X_1 
\] 
and 
\[
\alpha_{\infty}= -(\alpha_0+\alpha_1).
\]
Let $\tilde{\alpha}_i$ be the composition of $\alpha_i$ with $X_0,X_1 
\mapsto 1$.  
\end{defn}

The following proposition is a consequence of the expression of the exponential
\[
\forall P\in \UF \qquad \exp(P)= \sum_{n\geqs 0} \frac{P^n}{n!} 
\]
and of the equality 
\begin{equation} \label{eqwordexp}
(-X_0-X_1)^n=\sum_{\substack{W\in \ax\\ |W|=n}}(-1)^{|W|}W.
\end{equation}
\begin{prop}\label{C3exp} Let $W$ be a word in $\ax$. Then
\begin{align*}
\cax(e^{\frac{\mu}{2}X_0}|W)&=
\frac{\mu^{|W|}}{2^{|W|}|W|!}\tilde{\alpha}_0(W),\\
\cax(e^{\frac{\mu}{2}X_1}|W)&=
\frac{\mu^{|W|}}{2^{|W|}|W|!}\tilde{\alpha}_1(W)\qquad \mx{and}\\
\cax(e^{\frac{\mu}{2}X_{\infty}}|W)&=(-1)^{|W|}
\frac{\mu^{|W|}}{2^{|W|}|W|!}.\\
\end{align*}
\end{prop}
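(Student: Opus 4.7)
The plan is to expand each of the three exponentials directly using the defining series $\exp(P) = \sum_{n \geq 0} P^n/n!$ and then read off the coefficient of an arbitrary word $W \in \ax$. The proposition is essentially a bookkeeping statement that unpacks the definitions of $\alpha_0, \alpha_1$ and $\tilde\alpha_0, \tilde\alpha_1$, so the work is to match the three cases to the stated formulas.

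For $e^{\frac{\mu}{2}X_0}$, the series reduces term by term to $\sum_{n \geq 0} \frac{\mu^n}{2^n n!}\, X_0^n$, so the only words with nonzero coefficient are the pure powers $X_0^n$, with value $\frac{\mu^n}{2^n n!}$. I would observe that this is exactly what $\frac{\mu^{|W|}}{2^{|W|}|W|!}\,\tilde\alpha_0(W)$ computes: $\alpha_0$ annihilates any word containing an $X_1$ and acts as the identity on words made only of $X_0$'s, and then the substitution $X_0, X_1 \mapsto 1$ sends any such $X_0^{|W|}$ to $1$. The case of $e^{\frac{\mu}{2}X_1}$ is identical with the roles of $X_0$ and $X_1$ swapped.

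For $e^{\frac{\mu}{2}X_\infty}$, I would use the relation $X_\infty = -X_0 - X_1$ together with the key identity \eqref{eqwordexp}, which expands $(-X_0 - X_1)^n$ in the free noncommutative algebra as a sum over all length-$n$ words with common sign $(-1)^n$. Substituting into the exponential series yields
\[
e^{\frac{\mu}{2}X_\infty} = \sum_{n \geq 0} \frac{\mu^n}{2^n n!} \sum_{\substack{W \in \ax \\ |W| = n}} (-1)^n\, W,
\]
and reading off the coefficient of $W$ gives the claimed formula $(-1)^{|W|}\frac{\mu^{|W|}}{2^{|W|}|W|!}$.

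There is no real obstacle: once equation \eqref{eqwordexp} is in hand, each of the three identities is a one-line computation. The only subtlety worth flagging is the noncommutative expansion of $(-X_0 - X_1)^n$, where one must check that every word of length $n$ occurs exactly once; this is however precisely the content of \eqref{eqwordexp}, which is assumed available at this point of the paper.
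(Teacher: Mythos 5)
Your proposal is correct and follows exactly the route the paper indicates: the paper gives no separate proof but states that the proposition is a consequence of the exponential series and of the identity \eqref{eqwordexp}, which is precisely the computation you carry out. Your fleshing-out of the $X_0$ and $X_1$ cases via the vanishing of $\tilde{\alpha}_0$, $\tilde{\alpha}_1$ off pure powers is the intended reading of the statement.
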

In order to describe the coefficient of $\Phi(X_i,X_j)$ with either one of the
variables being $X_{\infty}$, we introduce a set of different decompositions 
of $W$ into sub-words.
\begin{defn}
Let $W$ be a word in $\ax$. For $i\in\{0,1\}$, let 
$
\decx(W,X_i)
$
 be the set of tuples $(V_1, X_i^{k_1}, V_2, X_i^{k_2},\ldots,V_p,X_i^{k_p})$
 with 
\begin{enumerate}
\item $1\leqs  p < \infty$, 
\item $V_j \in \ax$ and  $V_2, \ldots, V_p \neq
 \emptyset$,
\item $k_1, \ldots, k_{p-1} >0$ and $k_p \geqs 0$
\end{enumerate}
 such that  
\[
W=V_1X_i^{k_1} V_2 X_i^{k_2}\cdots V_pX_i^{k_p}.
\]
We will write  
$
(\mb V, \mb k) \in \decx(W,X_i)
$ instead of 
\[(V_1, X_i^{k_1}, V_2, X_i^{k_2},\ldots,V_p,X_i^{k_p})
\in \decx(W,X_i)
\]
 and $|\mb V|$ (resp. $|\mb k|$|) will denote
$|V_1|+\cdots +|V_p|$ (resp. $k_1+\cdots +k_p$). 
\end{defn}
The following proposition describes the coefficient of $W$ in the series
$\Phi(X_0,X_1)$, $\Phi(X_{\infty},X_0)$ and $\Phi(X_1,X_{\infty})$.
\begin{prop}\label{C3Phi}%
Let $W$ be a word in $\ax$. We have 
\[
\cax(\Phi(X_0,X_1)|W)=Z_W.
\]
The coefficients
  $\cax(\Phi(X_{\infty},X_0)|W)$ and $\cax(\Phi(X_1,X_{\infty})|W)$ can be
  written as 
\[
\cax(\Phi(X_{\infty},X_0)|W)=
\sum_{(\mb V, \mb k) \in \decx(W,X_0)}
(-1)^{|\mb V|}
Z_{X_0^{|V_1|} X_1^{k_1}X_0^{| V_2|} X_1^{k_2}\cdots
  X_0^{|V_p|}X_1^{k_p}}
\]
and
\[
\cax(\Phi(X_1,X_{\infty})|W)=
\sum_{(\mb V, \mb k) \in \decx(W,X_1)}
(-1)^{|\mb V|}
Z_{X_1^{|V_1|} X_0^{k_1}X_1^{| V_2|} X_0^{k_2}\cdots X_1^{|V_p|}X_0^{k_p}}.
\]
\end{prop}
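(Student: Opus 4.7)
The first formula $\cax(\Phi(X_0,X_1)|W)=Z_W$ is immediate from the definition of $\Phi$ as a series with coefficients indexed by $\ax$.

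For the second formula, the plan is to substitute $X_0 \mapsto X_\infty = -X_0 - X_1$ and $X_1 \mapsto X_0$ directly in the series $\Phi(X_0,X_1) = \sum_{U \in \ax} Z_U U$ and read off the coefficient of a fixed word $W$. To track things explicitly, I would use the canonical alternating block decomposition
\[
U = X_0^{a_1} X_1^{k_1} X_0^{a_2} X_1^{k_2} \cdots X_0^{a_p} X_1^{k_p},
\]
with $p \geqs 1$, $a_1, k_p \geqs 0$, $a_2,\ldots,a_p \geqs 1$ and $k_1,\ldots,k_{p-1} \geqs 1$, so that the decomposition is unique. After substitution, $U$ becomes $X_\infty^{a_1} X_0^{k_1} \cdots X_\infty^{a_p} X_0^{k_p}$, and applying \eqref{eqwordexp} to each factor $X_\infty^{a_i}$ produces a sum of monomials with signs $(-1)^{a_i}$ indexed by words $V_i \in \ax$ of length $a_i$. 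Extracting the coefficient of $W$ in this expansion amounts to counting tuples $(V_1,\ldots,V_p)$ of words of prescribed lengths whose interleaving with the fixed blocks $X_0^{k_j}$ equals $W$.

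The main step is then to recognize this reorganized sum as a sum over $\decx(W,X_0)$. The defining constraints on $\decx(W,X_0)$ (namely $V_1 \in \ax$ arbitrary, $V_2,\ldots,V_p \neq \emptyset$, $k_1,\ldots,k_{p-1}>0$ and $k_p \geqs 0$) match exactly the maximality conditions on the alternating block decomposition of $U$, so the pairs $(U,(V_1,\ldots,V_p))$ that contribute the target word $W$ are in bijection with $\decx(W,X_0)$. The associated sign is $(-1)^{a_1+\cdots+a_p}=(-1)^{|\mathbf{V}|}$, and the word indexing the coefficient $Z$ is $U = X_0^{|V_1|} X_1^{k_1} \cdots X_0^{|V_p|} X_1^{k_p}$, which yields the second formula. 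The third formula is obtained by the symmetric substitution $X_0 \mapsto X_1$, $X_1 \mapsto X_\infty$, which exchanges the roles of the two letters and leads to decomposing $U$ along its $X_1$-blocks, producing elements of $\decx(W,X_1)$. The only genuine obstacle is the combinatorial bookkeeping: one must verify that the boundary cases (empty $V_1$, vanishing $k_p$, and the value $p=1$) are handled consistently, so that no word $U$ is missed and none is counted twice.
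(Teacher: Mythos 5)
Your proposal is correct and follows essentially the same route as the paper: the author likewise parametrizes the monomials of $\Phi$ by their canonical alternating block structure (via the index set $\Lcn$, which is exactly your set of tuples $(a_1,k_1,\ldots,a_p,k_p)$), substitutes $X_\infty=-X_0-X_1$, expands each power $X_\infty^{l}$ with \eqref{eqwordexp}, and reorganizes the resulting sum over $\decx(W,X_0)$. Your extra attention to the boundary cases and the explicit bijection is a welcome clarification but not a different argument.
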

\begin{proof} The first statement is immediate. 
Let $\Lcn$ denote the set of double $p$-tuples ( $0\leqs
  p<\infty$) of integers
$  ((l_1,\ldots,l_p),(k_1,\ldots k_p))$ with $k_i,l_i \in \N$, such that, when
$p\geqs 2$ one has $k_i>0 $ for $i=1,\ldots,
{p-1}$, and  $l_j >0$ for $j=2,\ldots, {p}$. Let
$(\mb l, \mb k)$ denote an element of $\Lcn$.
We can write $\Phi(X_{\infty},X_0)$ as 
\[
\Phi(X_{\infty},X_0)=
\sum_{(\mb l, \mb k)\in \Lcn} Z_{X_{0}^{l_1} X_1^{k_1}\cdots  X_{0}^{l_p}X_1^{k_p}} 
X_{\infty}^{l_1} X_0^{k_1}\cdots
X_{\infty}^{l_p}X_0^{k_p} 
\]
which equals 
\[
\sum_{(\mb l, \mb k)\in \Lcn}  Z_{X_{0}^{l_1} X_1^{k_1}\cdots
  X_{0}^{l_p}X_1^{k_p}}
(-1)^{|\mb l|} 
(X_0+X_1)^{l_1} X_0^{k_1}\cdots (X_0+X_1)^{l_p}X_0^{k_p}. 
\]
Reorganizing, we see that the expression of $\cax(\Phi(X_{\infty},X_0)|W)$ follows
from \eqref{eqwordexp}; the case of $\cax(\Phi(X_1,X_{\infty})|W)$ is identical.
\end{proof}
\begin{thm}\label{rel2coef} 
The relation \eqref{rel2} is equivalent to the family of relations
\begin{multline}\label{rel2coefeq} 
\forall W \in \ax\sm\{\emptyset\}, \\
\shoveleft
\sum_{\substack{W_1, \ldots,W_6 \in \ax \\ 
W_1\cdots W_6=W}}
\frac{\mu^{|W_1|}}{2^{|W_1|}|W_1|!} \tilde{\alpha_0}(W_1) \times \\ 
\left(\sum_{\substack{(\mb U, \mb k) \in \\ \decx(W_2,X_0)}} 
(-1)^{|\mb U|} Z_{X_0^{|U_1|} X_1^{k_1}\cdots
  X_0^{|U_p|}X_1^{k_p}} \right) 
(-1)^{|W_3|}\frac{\mu^{|W_3|}}{2^{|W_3|}|W_3|!}  \times \\
\left(\sum_{\substack{(\mb V,\mb l)\in \\ \decx(W_4,X_1)}}
(-1)^{|\mb V|} 
Z_{X_1^{|V_1|} X_0^{l_1}\cdots  X_1^{|V_p|}X_0^{l_p}} \right)
\frac{\mu^{|W_5|}}{2^{|W_5|}|W_5|!} \tilde{\alpha_1}(W_5) 
Z_{W_6} =0.
\end{multline}
\end{thm}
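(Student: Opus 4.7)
The plan is to expand the six-fold product $P_3$ term by term and read off the coefficient of an arbitrary word $W \in \ax$, then invoke \eqref{rel2Cw}. Because $\UF$ is the completion of the free algebra on $\{X_0,X_1\}$ and the words in $\ax$ form a basis, the coefficient of $W$ in a product $A^{(1)}\cdots A^{(6)}$ of six series $A^{(i)} = \sum_{U} a^{(i)}_U U$ is exactly
\[
\cax(A^{(1)}\cdots A^{(6)}\,|\,W) = \sum_{\substack{W_1,\ldots,W_6\in\ax\\ W_1\cdots W_6=W}} a^{(1)}_{W_1}a^{(2)}_{W_2}\cdots a^{(6)}_{W_6}.
\]
So once I know each individual coefficient $a^{(i)}_{W_i}$, the theorem follows by a single substitution.

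The first step is to match each of the six factors of $P_3$ with the appropriate auxiliary result already established. Propositions \ref{C3exp} and \ref{C3Phi} are designed precisely for this: Proposition \ref{C3exp} gives the three exponential coefficients
\[
\cax(e^{\tfrac{\mu}{2}X_0}|W_1)=\tfrac{\mu^{|W_1|}}{2^{|W_1|}|W_1|!}\tilde{\alpha}_0(W_1),\quad \cax(e^{\tfrac{\mu}{2}X_\infty}|W_3)=(-1)^{|W_3|}\tfrac{\mu^{|W_3|}}{2^{|W_3|}|W_3|!},\quad \cax(e^{\tfrac{\mu}{2}X_1}|W_5)=\tfrac{\mu^{|W_5|}}{2^{|W_5|}|W_5|!}\tilde{\alpha}_1(W_5),
\]
while Proposition \ref{C3Phi} provides $\cax(\Phi(X_0,X_1)|W_6)=Z_{W_6}$ and the two more intricate formulas for $\cax(\Phi(X_\infty,X_0)|W_2)$ and $\cax(\Phi(X_1,X_\infty)|W_4)$ indexed by the decomposition sets $\decx(\cdot,X_0)$ and $\decx(\cdot,X_1)$.

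Substituting these six expressions into the formula for $C_{3,W}=\cax(P_3|W)$ produces precisely the big summation appearing in \eqref{rel2coefeq}. The equivalence then comes from \eqref{rel2Cw}: the pentagonal\,--\,hexagonal relation \eqref{rel2} holds if and only if $C_{3,W}=0$ for every nonempty $W\in\ax$, which is the displayed family.

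The only genuine point to verify in the argument is that no convergence or finiteness issue obstructs the term-by-term multiplication. This is straightforward because $\UF$ is graded by weight, each factor has constant term $1$, and for fixed $W$ only finitely many tuples $(W_1,\ldots,W_6)$ with $W_1\cdots W_6=W$ contribute, and for each such tuple the inner sums over $\decx(W_2,X_0)$ and $\decx(W_4,X_1)$ are also finite (bounded by $|W_2|$ and $|W_4|$ respectively). Thus the heart of the proof is the bookkeeping of plugging Propositions \ref{C3exp} and \ref{C3Phi} into the coefficient extraction, with no further subtlety.
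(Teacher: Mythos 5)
Your proposal is correct and follows essentially the same route as the paper: the paper likewise reduces relation \eqref{rel2} to the vanishing of $\cax(P_3|W)$ for all nonempty $W$, expands the coefficient of $W$ in the six-fold product as a sum over decompositions $W_1\cdots W_6=W$ of products of the individual coefficients, and then substitutes the formulas from Propositions \ref{C3exp} and \ref{C3Phi}. Your added remark on finiteness of the sums is a harmless elaboration of a point the paper leaves implicit.
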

\begin{proof}
The relation \eqref{rel2} is equivalent to the family of relations 
\[
\forall W \in \ax\sm\{\emptyset\} \qquad \qquad \cax(P_3,W)=0.
\]
As $P_3$ is a product of six factors, this is equivalent to 
\begin{multline*}
\forall W \in \ax\sm \{\emptyset\}\\
\shoveleft
\sum_{\substack{W_1, \ldots,W_6 \in \ax \\ 
W_1\cdots W_6=W}}
\cax(e^{\frac{\mu}{2}X_0},W_1)\cax(\Phi(X_{\infty},X_0),W_2)\cdot\\
\cax(e^{\frac{\mu}{2}X_{\infty}},W_3)\cax(\Phi(X_1,X_{\infty}),W_4)\cdot\\
\cax(e^{\frac{\mu}{2}X_{1}},W_5)\cax(\Phi(X_{0},X_1),W_6)=0.
\end{multline*}
The proposition then follows from Proposition \ref{C3exp} and \ref{C3Phi}.
\end{proof}
\begin{coro}\label{IIMZVcom}The relation \eqref{II} is equivalent to the family of
  relations 
 \begin{multline}
 \forall W \in \ax\sm\{\emptyset\}, \\
\shoveleft
\sum_{\substack{W_1, \ldots,W_6 \in \ax \\ 
 W_1\cdots W_6=W}}
 \frac{(i\pi)^{|W_1|}}{|W_1|!} \tilde{\alpha_0}(W_1) \times \\
 \left(\sum_{\substack{(\mb U, \mb k)
     \\ \in \decx(W_2,X_0)}} 
 (-1)^{|W_2|}\zsha(X_0^{|U_1|} X_1^{k_1}\cdots
   X_0^{|U_p|}X_1^{k_p}) \right) 
 (-1)^{|W_3|}\frac{(i\pi)^{|W_3|}}{|W_3|!}  \times \\
 \left(\sum_{\substack{(\mb V, \mb l) \\
 \in \decx(W_4,X_1)}} \hspace{-3ex}
 \zsha(X_1^{|V_1|} X_0^{l_1}\cdots  X_1^{|V_p|}X_0^{l_p}) \right)
\frac{(i\pi)^{|W_5|}}{|W_5|!} \tilde{\alpha_1}(W_5) \times \\
 (-1)^{\de(W_6)}\zsha(W_6) =0. \label{IIMZV}
 \end{multline}
\end{coro}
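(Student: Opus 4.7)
The plan is to derive the corollary by directly specializing Theorem \ref{rel2coef} to the Drinfel'd associator $\pkz$ using Drinfel'd's result that $\pkz$ is an associator with parameter $\mu=2i\pi$, together with the explicit formula $Z_W=(-1)^{\de(W)}\zsha(W)$ for its coefficients. So the equivalence we need is obtained purely by substituting the specific numerical values of $\mu$ and $Z_W$ into the general identity of Theorem \ref{rel2coef}.

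First I would note that with $\mu=2i\pi$, every prefactor of the form $\mu^{|W_i|}/(2^{|W_i|}|W_i|!)$ collapses to $(i\pi)^{|W_i|}/|W_i|!$, giving the three exponential factors in \eqref{IIMZV} directly. The factor $(-1)^{|W_3|}$ coming from $\cax(e^{\frac{\mu}{2}X_\infty}|W_3)$ carries over unchanged. The last factor $Z_{W_6}$ becomes $(-1)^{\de(W_6)}\zsha(W_6)$ by Furusho's formula, matching the corresponding term in \eqref{IIMZV}.

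The only step that requires care is tracking the two sign-contributions inside the sums over $\decx(W_2,X_0)$ and $\decx(W_4,X_1)$. For the second factor, a word $X_0^{|U_1|}X_1^{k_1}\cdots X_0^{|U_p|}X_1^{k_p}$ has depth equal to $|\mathbf{k}|$, so $Z_{(\cdot)}=(-1)^{|\mathbf{k}|}\zsha(\cdot)$; combined with the existing $(-1)^{|\mathbf{U}|}$ coming from Proposition \ref{C3Phi}, and using that $W_2=U_1X_0^{k_1}\cdots U_pX_0^{k_p}$ forces $|W_2|=|\mathbf{U}|+|\mathbf{k}|$, the signs consolidate to the single factor $(-1)^{|W_2|}$ appearing in \eqref{IIMZV}. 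For the fourth factor, a word $X_1^{|V_1|}X_0^{l_1}\cdots X_1^{|V_p|}X_0^{l_p}$ has depth $|\mathbf{V}|$, so the sign $(-1)^{\de(\cdot)}$ from Furusho's formula cancels exactly against the existing $(-1)^{|\mathbf{V}|}$, which explains the absence of any sign factor in front of the corresponding $\zsha$-sum in \eqref{IIMZV}.

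The main (and only) obstacle is therefore bookkeeping: making sure that for each of the two nontrivial sums the combinatorial sign from the general associator expansion correctly merges with the depth sign from the Furusho formula. Once this bookkeeping is done for both sums, the substitution transforms the identity of Theorem \ref{rel2coef} termwise into \eqref{IIMZV}, proving equivalence between \eqref{II} and the stated family of relations among shuffle-regularized multiple zeta values.
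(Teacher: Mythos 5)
Your proposal is correct and is exactly the argument the paper intends: the corollary is stated as an immediate specialization of Theorem \ref{rel2coef} to $\pkz$ with $\mu=2i\pi$ and $Z_W=(-1)^{\de(W)}\zsha(W)$, and your sign bookkeeping (the depth of $X_0^{|U_1|}X_1^{k_1}\cdots X_0^{|U_p|}X_1^{k_p}$ being $|\mathbf{k}|$ so that $(-1)^{|\mathbf{U}|+|\mathbf{k}|}=(-1)^{|W_2|}$, and the depth of $X_1^{|V_1|}X_0^{l_1}\cdots X_1^{|V_p|}X_0^{l_p}$ being $|\mathbf{V}|$ so that the signs cancel) is exactly what makes the stated form of \eqref{IIMZV} come out. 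Nothing is missing.
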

\subsection{The $5$-cycle or the pentagon relation, \eqref{rel3} and
  \eqref{III}}  
In order to find families of relations between the coefficients
equivalent to \eqref{rel1} and \eqref{rel2}, we decomposed the
product $P_2$ and $P_3$ in the basis of $\UF$ given by the words in $X_0$ and
$X_1$. We will do the same thing here; however, the monomials in the variables
$X_{ij}$ do not form a basis of $\UB$, because there are relations
between the $X_{ij}$. Using the defining relations of $\UB$, we see
that $X_{51}=-X_{12}-X_{13}-X_{14}$, and that
\begin{align*}
X_{51}&= -X_{54}-X_{53}-X_{52}\\
&=2X_{23}+2X_{24}+2X_{34}+X_{12}+X_{13}+X_{14}.
\end{align*}
Then, as the characteristic of $k$ is zero, we have
$X_{51}=X_{23}+X_{24}+X_{34}$. In this section, we will expand the
product in the  R.H.S of \ref{rel3}
using this relation and then decompose this product in a basis
of $\UB$. Let $B$ denote a basis of $\UB$ (in the sense of Definition
\ref{defnbasis}), and let $B_4$ denote the basis of $\UB$ coming from the 
identification  
\[
\UB \simeq k\llan X_{24} , X_{34},X_{45} \rran \rtimes k\llan
X_{12,}X_{23} \rran.
\]
This identification is induced by the morphism $f_4 : \UB \lra \UF$ that
maps $X_{i4}$ to $0$ ($1\leqs i\leqs 5$), $X_{12}$ to $X_0$, $X_{23} $
to $X_1$; the images of the other generators  are easily deduced from
these, by the choice of $X_{24}$, $X_{34}$ and $X_{45}$ as generators of the
kernel of $f_4$ (see \cite{IharaAPSBG}). Using the relation defining $\UB$, one sees that
\[
[X_{ij},X_{jk}]=-[X_{ik},X_{jk}] \qquad i\neq j,k \mx{ and } j\neq k
\]
which gives for example
\[
[X_{12},X_{24}]=-[X_{14},X_{24}]=[X_{34},X_{24}]+[X_{45},X_{24}].
\]

The basis $B_4$ is formed by $1$ and  the monomials, that is words of the form $U_{245}V_{123}$
where $U_{245}$ is a word in $\axq=\{X_{24},X_{34},X_{45}\}^*$ and $V_{123}$ is in
$\axud=\{X_{12},X_{23}\}^*$. Speaking of the empty word $\emptyset$ in $B_4$, we will
mean $1$ when seen in $\UB$ and $\emptyset$ when seen as the word.

Let $\axb$ be the dictionary $\{X_{24},X_{34},X_{45},X_{12},X_{23}\}^*$, and
let $\axtq$ and $\axudtq$ be respectively the sub-dictionary

\[
\axtq=\{X_{23},X_{24},X_{34}\}^*\qquad \mx{and}
\qquad \axudtq=\{X_{12},X_{23},X_{24},X_{34}\}^*.
\]  

Let $P_5$ be the product in $\UB$.
\[
\Phi(X_{12},X_{23}) \Phi(X_{34},X_{45}) \Phi(X_{51},X_{12})
\Phi(X_{23},X_{34}) \Phi(X_{45},X_{51}).
\]

As $X_{51}= X_{23}+X_{24}+X_{34}$, we can write $P_5$ without using
$X_{51}$
\begin{multline*}
P_5=\Phi(X_{12},X_{23}) \Phi(X_{34},X_{45}) \Phi( X_{23}+X_{24}+X_{34},X_{12})
\Phi(X_{23},X_{34}) \\
\Phi(X_{45}, X_{23}+X_{24}+X_{34}).
\end{multline*}
Expanding the terms $(X_{23}+X_{24}+X_{34})^n$ as 
\[
\sum_{\substack{W \in \axtq\\ |W|=n}} W,
\]
we have 
\begin{equation}\label{C5Wdef}
P_5=\sum_{W \in \axb} C_{5,W} W.
\end{equation}
Despite the fact that this expression is not unique as a decomposition of $P_5$
in $\axb$, these $C_{5,W}$ are the coefficients of a word $W$ just after expanding the product 
$P_5$ without $X_{51}$ (that is replacing $X_{51}$ by $X_{23}+X_{24}+X_{34}$), and as
such, they are unique and well defined. 

\begin{defn}\label{pgdef} Let $\pa$, $\pb$, $\pc$, $\pd$, $\pe$ be the morphisms
  from $\UB$ to $\UF$ defined respectively  on the monomial $X_{12}$, $X_{23}$,
  $X_{34}$, $X_{45}$, $X_{24}$ by: 
\newcommand{\espa}{\quad}
\[\hspace{-4pt}
\begin{array}{l@{\hspace{0.32em}}l@{\hspace{0.32em}}l@{\hspace{0.32em}}l%
@{\hspace{0.32em}}l}
\pa(X_{12}) =X_0, &\pa(X_{23})=X_1, &\pa(X_{34})=0, &
\pa(X_{45})=0, &\pa(X_{24})=0, \\
\pb(X_{12}) =0, &\pb(X_{23})=0, &\pb(X_{34})=X_0, &
\pb(X_{45})=X_1, &\pb(X_{24})=0, \\
\pc(X_{12}) =X_1, &\pc(X_{23})=X_0, &\pc(X_{34})=X_0, &
\pc(X_{45})=0, &\pc(X_{24})=X_0, \\
\pd(X_{12}) =0, &\pd(X_{23})=X_0, &\pd(X_{34})=X_1, &
\pd(X_{45})=0, &\pd(X_{24})=0, \\
\pe(X_{12}) =0, &\pe(X_{23})=X_1, &\pe(X_{34})=X_1, &
\pe(X_{45})=X_0, &\pe(X_{24})=X_1. 
\end{array}
\]
By convention, we will have $\pg(1)=\pg(\emptyset)=1$.
\end{defn}
\begin{prop}\label{C5W}
For all words $W \in \axb$ ($W\neq \emptyset$), the coefficient $C_{5,W}$ is given by
\begin{equation}
C_{5,W}=\sum_{\substack{U_1,\ldots,U_5 \in \axb \\ U_1\cdots U_5=W}}
Z_{\pa(U_1)}Z_{\pb(U_2)}Z_{\pc(U_3)}Z_{\pd(U_4)}Z_{\pe(U_5)},
\end{equation}
where by convention $Z_{0}=0$ and $Z_{1}=Z_{\emptyset}=1$.
\end{prop}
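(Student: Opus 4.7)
\medskip
\noindent\textbf{Proof plan.} The strategy is to express each of the five factors $\Phi_k$ appearing in
\[
P_5 = \Phi(X_{12},X_{23})\,\Phi(X_{34},X_{45})\,\Phi(X_{23}+X_{24}+X_{34},X_{12})\,\Phi(X_{23},X_{34})\,\Phi(X_{45},X_{23}+X_{24}+X_{34})
\]
uniformly as a sum $\sum_{U\in\axb} Z_{\pg_k(U)}\,U$ for the corresponding morphism $\pg_k \in \{\pa,\pb,\pc,\pd,\pe\}$, and then to multiply these five series and collect coefficients. The convention $Z_0=0$ will play a silent but essential role in extending each sum from a small sub-dictionary to all of $\axb$.

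\medskip
\noindent The first step is to treat the three \emph{pure} factors. For $\Phi(X_{12},X_{23})=\sum_{V\in\ax}Z_V\,V\big|_{X_0\mapsto X_{12},\,X_1\mapsto X_{23}}$, the substitution is inverse to $\pa$ on $\{X_{12},X_{23}\}^{*}$, so this factor equals $\sum_{U\in\{X_{12},X_{23}\}^{*}} Z_{\pa(U)}\,U$, which extends to a sum over all $U\in\axb$ because $\pa$ kills any other letter (making $Z_{\pa(U)} = Z_0 = 0$). The same argument gives the analogous expression for $\Phi(X_{34},X_{45})$ with $\pb$ and $\Phi(X_{23},X_{34})$ with $\pd$. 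For the two \emph{mixed} factors, I would use the identity
\[
(X_{23}+X_{24}+X_{34})^{n} \;=\; \sum_{\substack{V\in\axtq\\|V|=n}} V,
\]
which, inserted into $\Phi(X_{23}+X_{24}+X_{34},X_{12})=\sum_V Z_V\,V\big|_{X_0\mapsto X_{23}+X_{24}+X_{34},\,X_1\mapsto X_{12}}$, produces a sum indexed by words $U$ in $\{X_{23},X_{24},X_{34},X_{12}\}^{*}$ whose coefficient is exactly $Z_W$ for the $\{X_0,X_1\}$-word $W$ obtained by sending each of $X_{23},X_{24},X_{34}$ to $X_0$ and $X_{12}$ to $X_1$; but this $W$ is precisely $\pc(U)$. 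The same manipulation with $X_{45}\mapsto X_0$ and $X_{23},X_{24},X_{34}\mapsto X_1$ produces $\pe$ for the final factor.

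\medskip
\noindent The last step is mechanical: multiplying the five series and distributing, the product of $\sum_{U_k\in\axb} Z_{\pg_k(U_k)}\,U_k$ for $k=1,\dots,5$ equals
\[
\sum_{U_1,\ldots,U_5\in\axb} Z_{\pa(U_1)}Z_{\pb(U_2)}Z_{\pc(U_3)}Z_{\pd(U_4)}Z_{\pe(U_5)}\;U_1U_2U_3U_4U_5,
\]
and regrouping by $W=U_1U_2U_3U_4U_5$ yields the claimed formula. There is no deep obstacle; the only point requiring care is conceptual rather than computational: the $C_{5,W}$ are the coefficients \emph{before} any use of the defining relations of $\UB$ (as emphasised after \eqref{C5Wdef}), so the decomposition is well defined even though the monomials in $\axb$ do not form a basis of $\UB$. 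Both the expansion of $(X_{23}+X_{24}+X_{34})^n$ and the concatenation $U_1\cdots U_5$ take place in this weaker sense, which is exactly the level at which the proposition is stated.
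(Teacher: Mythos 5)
Your proposal is correct and follows essentially the same route as the paper: each of the five factors is rewritten as $\sum_{U\in\axb} Z_{\pg(U)}\,U$ (using the convention $Z_0=0$ to extend the pure factors from their sub-dictionaries, and the expansion of $(X_{23}+X_{24}+X_{34})^n$ as the sum of all words of length $n$ in $\axtq$ for the mixed factors), and the five series are then multiplied and regrouped by $W=U_1\cdots U_5$. The only point treated slightly more explicitly in the paper is the verification that each word of $\axudtq$ occurs exactly once in the expanded mixed factor, via its unique decomposition $V_1X_{12}^{k_1}\cdots V_pX_{12}^{k_p}$ with $V_i\in\axtq$, but this is implicit in your argument and does not constitute a gap.
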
 
\begin{proof} It is enough to show that the $i$-th factor of $P_5$
  without using $X_{51}$ can be written as 
\[
\sum_{U_i \in \axb} Z_{\pg(U_i)}U_i. 
\]
As the first, second and fourth factors are similar, we will discuss only the
first one.
It is clear in the case of $\Phi(X_{12},X_{23})$ that either
$U_1$ is in $\axud$ and its coefficient is then $Z_{\pa(U_1)}$,
or $U_1$ is not in $\axud$ and it
does not appear in $\Phi(X_{12},X_{23})$ which means that its coefficient is
$0$.  

The third and fifth factors are similar and thus we will treat only the former. 
We can write $\Phi( X_{23}+X_{24}+X_{34},X_{12})$ as 
\[
\sum_{(\mb l, \mb k )\in \Lcn} 
Z_{X_0^{l_1}X_1^{k_1}\cdots X_0^{l_p}X_1^{k_p}} 
(X_{23}+X_{24}+X_{34})^{l_1}X_{12}^{k_1} 
\cdots (X_{23}+X_{24}+X_{34})^{l_p}X_{12}^{k_p}.
\]
We can rewrite  the previous sum as running over all the words in the
letters $X_{12}$, $X_{23}$, $X_{24}$ and $X_{34}$ because $(X_{23}+X_{24}+X_{34})^{l}$ is equal to
\[
\sum_{\substack{W\in \axtq \\ |W|=l}} W.
\]
Using the unique decomposition (as word) of $U_3\in \axudtq$ as
\[
U_3=V_1X_{12}^{k_1} \cdots V_pX_{12}^{k_p}\qquad \mx{with }V_i \in \axtq,
\] 
we see that each word $U_3$ in $\axudtq$ appears one and only one time in $\Phi(
X_{23}+X_{24}+X_{34},X_{12})$ with the
coefficient $Z_{X_0^{|V_1|}X_1^{k_1}\cdots X_0^{|V_p|}X_1}$. We finally have 
\[
\Phi( X_{23}+X_{24}+X_{34},X_{12})=\sum_{U_3 \in \axb} Z_{\pc(U_3)} U_3.
\]
\end{proof}

We fix a basis $B$ of $\UB$ (in the sense of Definition \ref{defnbasis}). Remark
\ref{rembasis} ensures that
for every $W$ in $\axb$, there exists a unique decomposition of $W$ (in  $\UB$)
in terms of linear 
combinations of elements of $B$
\[
W=\sum_{b\in B} l_{b,W} b \qquad l_{b,W} \in k.
\]
Then, using the basis $B$, we can find a family of relations equivalent to \eqref{rel3}.
\begin{thm}\label{rel3coef}The relation \eqref{rel3} is equivalent to the family of relations
\begin{flalign}\label{rel3coefeq}
\forall b \in B\, \, (b\neq 1) & \qquad \qquad 
\sum_{W\in \axb} l_{b,W} C_{5,W} =0 & &
\end{flalign}
where $C_{5,W}$ are given by Proposition \ref{C5W}.
\end{thm}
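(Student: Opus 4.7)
The plan is to start from the identity $P_5 = \sum_{W \in \axb} C_{5,W} W$ established by \eqref{C5Wdef} and Proposition \ref{C5W}, where the $C_{5,W}$ are obtained by substituting $X_{51} = X_{23}+X_{24}+X_{34}$ and formally expanding the product of five $\Phi$'s. Note that this expression does \emph{not} yet use the full relations $\mathcal R$ defining $\UB$, so the $W$'s are not yet linearly independent. To turn $P_5 = 1$ into genuine equations between the $Z$'s, I will re-expand every monomial $W$ in the chosen basis $B$.

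By Remark \ref{rembasis}, each word $W \in \axb$ has a finite decomposition $W = \sum_{b \in B} l_{b,W} b$ in $\UB$. Substituting and swapping the order of summation gives
\[
P_5 = \sum_{b \in B} \left( \sum_{W \in \axb} l_{b,W} C_{5,W} \right) b.
\]
The swap is legitimate because the elements of $B$ are homogeneous: if $l_{b,W} \neq 0$ then $|W| = |b|$, so only finitely many $W$ contribute to the inner sum for any fixed $b$ (there being only finitely many words in $\axb$ of a given length). Since $B$ is a basis of $\UB$ in the sense of Definition \ref{defnbasis}, the equation $P_5 = 1$ is equivalent to the vanishing of the coefficient of every $b \neq 1$ together with the coefficient of $b=1$ being equal to $1$; these are exactly the relations \eqref{rel3coefeq} for $b \neq 1$, plus one extra scalar equation for $b = 1$.

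It remains to show that the $b=1$ equation is automatic and can be omitted. Since $1 \in B$ is homogeneous of degree $0$, homogeneity forces $l_{1,\emptyset} = 1$ and $l_{1,W} = 0$ for any nonempty $W \in \axb$. Hence the $b=1$ relation reduces to $C_{5,\emptyset} = 1$. This holds because $P_5$ is a product of five group-like elements, each with constant term $1$, so its constant term is $1$; equivalently, by Proposition \ref{C5W}, $C_{5,\emptyset} = Z_{\emptyset}^5 = 1$. The conclusion of Theorem \ref{rel3coef} follows. The only mildly delicate point in the argument is the legitimacy of the re-summation, which is handled by the finiteness observations in Remark \ref{rembasis}; there is no genuine analytic obstacle.
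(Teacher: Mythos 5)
Your proposal is correct and follows essentially the same route as the paper: decompose each $W$ in the basis $B$, swap the order of summation using the finiteness guarantees of Remark \ref{rembasis}, and read off the coefficient of each $b$ from $P_5=1$. Your explicit verification that the $b=1$ equation reduces to $C_{5,\emptyset}=1$ and holds automatically (via group-likeness) is a small extra precision the paper leaves implicit, but it is not a different argument.
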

\begin{proof}
As observed in Remark \ref{rembasis}, for a given $W$ in $\axb$ there are only finitely
many $l_{b,W}$ 
that are non zero. Moreover, for any $b$ in $B$ there are only finitely many
$l_{b,W}$ that are non zero.

The product $P_5$ is then equal to 
\begin{align*}
P_5&=\sum_{W \in \axb} C_{5,W} W\\
&=\sum_{W \in \axb} C_{5,W}\left(\sum_{b\in B} l_{b,W} b \right)\\
&=\sum_{b\in B}\left(\sum_{W \in \axb} l_{b,W}C_{5,W} \right)b.
\end{align*}
The relation \eqref{rel3} tells us that 
\[
P_5=1
\]
which, because $1$ is in $B$, means that $C_{5,\emptyset}=1$ and 
\begin{flalign*}
\forall b \in B\, \, (b\neq 1) & \qquad \qquad 
\sum_{W\in \axb} l_{b,W} C_{5,W} =0. & &
\end{flalign*}
\end{proof}
Using the more common basis $B_4$ we have:
\begin{coro}
The relation \eqref{rel3} is equivalent to the family of relations
\begin{flalign}\label{rel3coefeqB4}
\forall b_4 \in B_4\, \, (b_4\neq 1) & \qquad \qquad 
\sum_{W\in \axb} l_{b_4,W} C_{5,W} =0 & &
\end{flalign}
where the $C_{5,W}$ are given by Proposition \ref{C5W}.
\end{coro}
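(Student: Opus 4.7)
The plan is to deduce this corollary as a direct application of Theorem \ref{rel3coef} to the specific basis $B_4$. The hypothesis of Theorem \ref{rel3coef} requires only that $B$ be a basis of $\UB$ in the sense of Definition \ref{defnbasis}, so the entire content of the corollary reduces to checking that $B_4$ satisfies that definition. Once this is verified, the same formula applies verbatim with $b$ replaced by $b_4$ and $l_{b,W}$ replaced by $l_{b_4,W}$.

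First I would recall the description of $B_4$ given earlier: it consists of $1$ together with the monomials of the form $U_{245}V_{123}$, where $U_{245}\in\axq=\{X_{24},X_{34},X_{45}\}^*$ and $V_{123}\in\axud=\{X_{12},X_{23}\}^*$. These are plainly homogeneous (each word has a well-defined total degree), so the second condition of Definition \ref{defnbasis} holds. For the first condition — that every $\Psi\in\UB$ has a unique expansion as a series in the $b_4\in B_4$ — I would invoke Ihara's identification
\[
\UB \simeq k\llan X_{24},X_{34},X_{45}\rran \rtimes k\llan X_{12},X_{23}\rran,
\]
cited in \cite{IharaAPSBG}, which exhibits any element of $\UB$ uniquely as such a series. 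Concretely, using the relations $\mc R$ one rewrites every word in the $X_{ij}$'s in the form $U_{245}V_{123}$ by moving all letters from $\axud$ to the right, producing only a finite sum of monomials of bounded weight at each stage, so Remark \ref{rembasis} applies.

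With this in hand, the corollary follows by specializing Theorem \ref{rel3coef} to $B=B_4$: for each $W\in\axb$ one has the decomposition $W=\sum_{b_4\in B_4} l_{b_4,W}\,b_4$ in $\UB$, and the equivalent family of relations reads
\[
\forall\, b_4\in B_4\;\;(b_4\neq 1),\qquad \sum_{W\in\axb} l_{b_4,W}\,C_{5,W}=0,
\]
with $C_{5,W}$ given by Proposition \ref{C5W}. There is no real obstacle here; the only nontrivial ingredient is Ihara's semi-direct product decomposition which guarantees that $B_4$ qualifies as a basis in the sense required. One could even observe, as a bonus, that because the relations defining $\UB$ have integer coefficients and the rewriting process preserves this, the coefficients $l_{b_4,W}$ lie in $\Z$, as noted in the introduction.
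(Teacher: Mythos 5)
Your proposal is correct and matches the paper's (implicit) argument: the corollary is stated there as an immediate specialization of Theorem \ref{rel3coef} to $B=B_4$, with the fact that $B_4$ is a basis in the sense of Definition \ref{defnbasis} taken for granted from Ihara's identification $\UB \simeq k\llan X_{24},X_{34},X_{45}\rran \rtimes k\llan X_{12},X_{23}\rran$ recalled earlier in the section. Your explicit verification of that hypothesis, and the remark on integrality of the $l_{b_4,W}$, simply make precise what the paper leaves to the reader.
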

\begin{rem}
In the case of the basis $B_4$ one can check that the coefficients
$l_{b_4,W}$ are in $\Z$.
\end{rem}
The previous corollary, applied to the particular case of the Drinfel'd
associator and making explicit the $C_{5,W}$ in terms of multiple zeta values, gives:
\begin{thm}\label{IIIMZVcom} With the convention that $\zsha(0)=0$,
the relation \eqref{III} is equivalent to the family of relations
\newlength{\hauteurpar}
\begin{multline}\label{IIIMZV}
\forall b_4 \in B_4\, \, (b_4\neq 1) \\
\shoveleft 
\sum_{W
}l_{b_4,W} 
\left(
\sum_{
U_1\cdots U_5=W}
(-1)^{\dea(U_1)+\deb(U_2) +\dec(U_3)+ \ded(U_4)+ \dee(U_5)} \right.\\
\left. 
\vphantom{%
\sum_{
U_1\cdots U_5=W}
}
\zsha(\pa(U_1))
\zsha(\pb(U_2))
\zsha(\pc(U_3)
\zsha(\pd(U_4))
\zsha(\pe(U_5))
\rule{0pt}{\hauteurpar} 
\right)
 =0 
\end{multline}
where $\degg(U)$ is the depth of $\pg(U)$ and the words $W$, $U_i$ are in $\axb$.
\end{thm}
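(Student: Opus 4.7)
The plan is to apply the previous corollary (the specialization of Theorem \ref{rel3coef} to the basis $B_4$) to the Drinfel'd associator $\pkz$, and then to unfold the coefficients $C_{5,W}$ furnished by Proposition \ref{C5W} using the explicit formula for the coefficients of $\pkz$.

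As a first step, I would invoke the corollary: since $\pkz$ is an associator, the pentagon relation \eqref{III} is equivalent to
\[
\forall b_4 \in B_4 \text{ with } b_4 \neq 1, \qquad \sum_{W \in \axb} l_{b_4,W}\, C_{5,W} = 0,
\]
where the $C_{5,W}$ are those of Proposition \ref{C5W}. By Remark \ref{rembasis}, only finitely many $l_{b_4,W}$ are nonzero for each fixed $b_4$, so the sum is well defined.

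The next step is to substitute the identity $Z_V = (-1)^{\de(V)}\zsha(V)$, which holds for $\pkz$, into
\[
C_{5,W} = \sum_{U_1\cdots U_5 = W} Z_{\pa(U_1)} Z_{\pb(U_2)} Z_{\pc(U_3)} Z_{\pd(U_4)} Z_{\pe(U_5)}.
\]
Each factor $Z_{\pa(U_i)}$ becomes $(-1)^{\dea(U_i)}\zsha(\pa(U_i))$, and similarly for $\pb,\pc,\pd,\pe$ using the shorthand $\deb,\dec,\ded,\dee$ defined in the statement; the overall sign of a summand is then $(-1)^{\dea(U_1)+\deb(U_2)+\dec(U_3)+\ded(U_4)+\dee(U_5)}$ as in \eqref{IIIMZV}.

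Finally, I would check that the conventions match: the formula of Proposition \ref{C5W} uses $Z_0 = 0$, while the rewritten expression uses $\zsha(0) = 0$; both force a summand to vanish as soon as one of the $\pa(U_i),\ldots,\pe(U_5)$ equals $0$, so the two forms of $C_{5,W}$ coincide term by term. Plugging the rewritten $C_{5,W}$ into the corollary then produces exactly the family \eqref{IIIMZV}. The argument is essentially bookkeeping built on top of Theorem \ref{rel3coef}; the only points requiring attention are the sign tracking through the five factors and the compatibility of the two vanishing conventions, neither of which poses a real obstacle.
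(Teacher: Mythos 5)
Your proposal is correct and follows exactly the route the paper takes: the paper derives Theorem \ref{IIIMZVcom} in one step by applying the corollary of Theorem \ref{rel3coef} (for the basis $B_4$) to $\pkz$ and substituting $Z_V=(-1)^{\de(V)}\zsha(V)$ into the formula of Proposition \ref{C5W}, with the conventions $Z_0=0$ and $\zsha(0)=0$ matching as you note. Your sign bookkeeping through the five factors is the same as the paper's implicit computation, so there is nothing to add.
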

\section{Bar Construction and associator relations}\label{section:bar}
In this section, we suppose that $k$ is $\C$. We review the notion of bar construction and its
links with multiple zeta values. Those results have been shown in
greater generality in \cite{IIDFLSChen} and \cite{BrownMZVPMS}. We
will recall Brown's variant of Chen's reduced bar construction in
the case of the moduli spaces of curves of genus $0$ with $4$ and $5$
marked points, $\m_{0,4}$ and $\m_{0,5}$.
\subsection{Bar Construction}
The moduli space of curves of genus $0$ with $4$ marked points, $\m_{0,4}$, is 
\[
\m_{0,4}=\{(z_1,\ldots, z_4) \in (\p^1)^4\, |\, z_i \neq z_j \mx{ if }
i\neq j\}/\PGL_2(k)
\]
 and is identified as 
\[
\m_{0,4}\simeq\{t\in (\p^1)\, |\, t \neq0,1, \infty\}
\]
by sending the point $[(0,t,1,\infty)]\in \m_{0,4}$ to $t$.
 
The moduli space of curves of genus $0$ with $5$ marked points, $\m_{0,5}$, is 
\[
\m_{0,5}=\{(z_1,\ldots, z_5) \in (\p^1)^5\, |\, z_i \neq z_j \mx{ if }
i\neq j\}/\PGL_2(k)
\]
and is identified as 
\[
\m_{0,5}\simeq\{(x,y)\in (\p^1)^2\, |\, x,y \neq0,1, \infty \mx{ and
}x\neq y\}
\]
by sending the point $[(0,xy,y,1,\infty)]\in \m_{0,5}$ to $(x,y)$. This
identification can be interpreted as the composition of 
\[
\xymatrix@R=1ex{
\m_{0,5} \ar[r] & \m_{0,4} \times  \m_{0,4} \\
[(z_1,\ldots,z_5)] \ar@{|->}[r] & [(z_1,z_2,z_3,z_5)]\times [(z_1,z_3,z_4,z_5)]  
}
\]
with the previous identification of $\m_{0,4}$ using the fact that
\[
[(0,xy,y,\infty)]=[(0,x,1,\infty)].
\]

For $\mc M=\m_{0,4}$ or $\mc M=\m_{0,5}$, Brown has defined in
\cite{BrownMZVPMS} a graded Hopf $k$-algebra
\begin{equation}
V(\mc M)= \oplus_{m=0}^{\infty} V_m(\mc M) \subset
\oplus_{m=0}^{\infty} \Hdr^1(\mc M)^{\otimes m}.
\end{equation}
Here $V_0(\mc M)=k$, $V_1(\mc M)=\Hdr^1(\mc M)$ and $V_m(\mc M)$ is the
intersection of the kernel $\wedge_i$ for $1\leqs i \leqs m-1$:
\[
\xymatrix@R=1ex{
\wedge_i :\Hdr^1(\mc M)^{\otimes m} \ar[r] & \Hdr^1(\mc M)^{\otimes
  m-i-1} \otimes \Hdr^2(\mc M) \otimes \Hdr^1(\mc M)^{\otimes i-1} \\
\nu_m \otimes \cdots \otimes \nu_1\ar@{|->}[r] & \nu_m \otimes
\cdots \otimes (\nu_{i+1} \wedge \nu_{i})\otimes \cdots \otimes \nu_1 .
}
\]
Suppose that $\om_1, \ldots, \om_k$ form a basis of $\Hdr^1(\mc M)$;
then the elements of $V_m(\mc M)$ can be written as linear combinations of symbols
\[
\sum_{I=(i_1, \ldots , i_m)} c_I[\om_{i_m}|\ldots|\om_{i_1}],
\]
with $c_I \in k$, which satisfy the integrability condition
\begin{equation}\label{intcond}
\sum_{I=(i_1, \ldots , i_m)} c_I \om_{i_m} \otimes
\cdots \otimes \om_{i_{j+2}}\otimes (\om_{i_{j+1}} \wedge
\om_{i_{j}})\otimes \om_{i_{j-1}} \otimes\cdots \otimes \om_{i_1} =0
\end{equation}
for all $1\leqs j \leqs m-1$.
\begin{defn} Brown's bar construction over $\mc M$ is the tensor product 
\[
B(\mc M)=\mc O_{\mc M} \otimes V(\mc M).
\]
\end{defn}
\begin{thm}[{\cite{BrownMZVPMS}}]
The bar construction $B(\mc M)$ is a commutative graded Hopf algebra isomorphic
to the $0^{Th}$ cohomology group of Chen's reduced bar complex on
$\mc O_{\mc M}$: 
\[
B(\mc M) \simeq \HH^0(B(\Omega^{\bullet}\mc O_{\mc M})).
\]
\end{thm}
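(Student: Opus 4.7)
The plan is to unpack the definition of Chen's reduced bar complex $B(\Omega^{\bullet}\mc O_{\mc M})$ and match its $0^{Th}$ cohomology directly against Brown's algebra $B(\mc M)=\mc O_{\mc M}\otimes V(\mc M)$. Since both sides are graded $k$-algebras built from tensors of differential forms on $\mc M$, the task is to check that the cocycle condition in bar-degree zero coincides with the integrability condition \eqref{intcond} that defines $V(\mc M)$, and that the algebra structure is preserved.

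First I would recall Chen's construction: the reduced bar complex is a bicomplex on the normalized bar construction of the commutative dga $\Omega^{\bullet}\mc O_{\mc M}$, with a total differential $D = d_{\mathrm{int}}+d_{\mathrm{ext}}$ where $d_{\mathrm{int}}$ is induced by the exterior derivative on each tensor factor and $d_{\mathrm{ext}}$ is the bar differential given by wedging adjacent factors. Since $\mc M$ is an affine smooth variety, we may work globally with $\Omega^{\bullet}\mc O_{\mc M}$. In bar-degree $m$ and total (form) degree zero, a cochain is a finite sum $\sum c_I[\om_{i_m}|\cdots|\om_{i_1}]$ with $\om_{i_j}\in \Hdr^1(\mc M)$ (actually closed global $1$-forms, but the passage to cohomology classes is valid because $\mc M$ is affine and $\Hdr^1(\mc M)$ is generated by logarithmic forms). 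Applying $d_{\mathrm{ext}}$ produces exactly the sum
\[
\sum_{j=1}^{m-1}\sum_I c_I\, \om_{i_m}\otimes\cdots\otimes(\om_{i_{j+1}}\wedge \om_{i_j})\otimes\cdots\otimes\om_{i_1},
\]
while $d_{\mathrm{int}}$ vanishes since the $\om_i$ are closed. The vanishing of this expression for every $j$ is precisely the integrability condition \eqref{intcond}, so the space of $0$-cocycles in bar degree $m$ is canonically identified with $V_m(\mc M)$.

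Next I would check that there are no nontrivial coboundaries in total degree zero: a potential coboundary would have to come from a cochain of total form-degree $-1$, which does not exist in the normalized bar construction. Hence $\HH^0$ equals the cocycles. Tensoring with $\mc O_{\mc M}$ (the $0^{Th}$ piece of $\Omega^{\bullet}\mc O_{\mc M}$ in external form-degree) accounts for the full $B(\mc M)=\mc O_{\mc M}\otimes V(\mc M)$, so the underlying graded vector spaces match. It then remains to verify that the shuffle product on Chen's bar complex descends to the shuffle product on tensors of $1$-forms used by Brown, and that the coproduct (deconcatenation) is compatible; both are formal consequences of the definitions, which I would state and refer to \cite{IIDFLSChen} and \cite{BrownMZVPMS} for the detailed verification.

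The main obstacle I expect is the careful identification of the bar differential with the wedge-kernel condition in the presence of the normalization, together with checking that we are allowed to replace global closed $1$-forms by their cohomology classes without losing or gaining cocycles; this is where affineness of $\mc M_{0,4}$ and $\mc M_{0,5}$ and the fact that $\Hdr^1$ is spanned by explicit logarithmic forms $\mathrm{dlog}(z_i-z_j)$ enter crucially. Once this is in place, the isomorphism of graded Hopf algebras follows, giving the stated theorem as in \cite{BrownMZVPMS}.
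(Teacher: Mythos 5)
The paper states this theorem as a direct quotation from \cite{BrownMZVPMS} and gives no proof of it, so there is no internal argument to compare yours against; the following assesses your sketch on its own terms. Your outline is the standard one and is correct in broad strokes: for a model of the de Rham complex with vanishing differential, the bar differential in total degree zero reduces to the alternating sum of adjacent wedge maps, its kernel in bar-degree $m$ is exactly $V_m(\mc M)$ by the integrability condition \eqref{intcond}, the reduced (normalized) bar construction has nothing in total degree $-1$ so $\HH^0$ coincides with the cocycles, and the shuffle product and deconcatenation coproduct match Brown's structures formally.

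Where the sketch underestimates the work is the reduction to such a model. Observing that $d_{\mathrm{int}}$ vanishes ``since the $\om_i$ are closed'' only shows that your particular cochains are cocycles for the internal differential; to identify $\HH^0$ of the bar complex on the full $\Omega^{\bullet}\mc O_{\mc M}$ with the bar complex of the subalgebra generated by the logarithmic forms $\dd\log u_{ij}$, one needs (i) that this subalgebra is a quasi-isomorphic sub-dga — formality of $\m_{0,n}$, i.e.\ the Arnold--Brieskorn description of $\Hdr^{\bullet}(\m_{0,n})$ by logarithmic forms, which is strictly more than ``$\Hdr^1$ is spanned by logarithmic forms'' since one must also control $\Hdr^2$ and the quadratic relations among wedges — and (ii) that a quasi-isomorphism of connected dga's induces an isomorphism on $\HH^0$ of reduced bar complexes, via the length-filtration spectral sequence. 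Together with the precise role of the $\mc O_{\mc M}$ factor, which is not merely tensored on but arises from the degree-zero part of the dga, this is the actual content of Brown's proof, and your sketch defers it to the references rather than supplying it. As a reconstruction of a cited background theorem this is reasonable and you correctly flag the obstacle, but as a self-contained proof the argument has its genuine gap at exactly that step.
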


Let $\nu_m, \ldots, \nu_1$ be $m$ holomorphic $1$-forms in $\Omega^1(\mc M)$. The
iterated integral of the word $\nu_m \cdots \nu_1$, denoted by 
\[
\iti \nu_m \circ \cdots \circ \nu_1,
\]
is the application that sends any path $\gamma : [0,1] \ra \mc M$
to 
\[
\iti_{\gamma} \nu_m \circ \cdots \circ \nu_1= \int_{0<t_1<\ldots <t_m}
\gamma^*\nu_1(t_1) \w \cdots \w \gamma^*\nu_m(t_m). 
\]
This value is called the iterated integral  of $\nu_m \cdots \nu_1$
along  $\gamma$. We extend these definitions by linearity to linear
combinations of forms $\sum_Ic_I \nu_{i_m} \ldots \nu_{i_1}$. 

When, for any $\gamma$, the iterated integral 
\[
\iti_{\gamma} \sum_I c_I \nu_m \circ \cdots \circ \nu_1
\]
 depends only on the homotopy class of $\gamma$,  
we say it is an homotopy invariant iterated integral and denote it by $\iti  \sum c_I
\nu_m \circ \cdots \circ \nu_1$. Let $L(\mc M)$ denote the set of all  
homotopy invariant iterated integrals.

\begin{prop}[{\cite{BrownMZVPMS}}]
The morphism $\rho$ defined by 
\[
\xymatrix@R=1ex{
\rho :B(\mc M)\ar[r] & L(\mc M)\\
\ds \sum_I c_I [\om_{i_m}|\cdots|\om_{i_1}]\ar@{|->}[r] &
\ds 
\iti \sum_{I} c_I \om_{i_m}
\circ \cdots \circ \om_{i_1} 
}
\]
is an isomorphism.
\end{prop}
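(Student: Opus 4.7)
The plan is to recognize this statement as a version of Chen's fundamental theorem on homotopy invariant iterated integrals, specialized to the reduced bar complex on $\mc O_{\mc M}$. Since the target of $\rho$ is $L(\mc M)$, there are three things to establish: $\rho$ is well-defined (lands in the homotopy-invariant part), it is injective, and it is surjective onto $L(\mc M)$.

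For well-definedness, I would fix a smooth homotopy $H : [0,1]^2 \to \mc M$ between two paths and compute the derivative in the homotopy parameter $s$ of
\[
\iti_{H(\cdot,s)} \sum_I c_I \om_{i_m} \circ \cdots \circ \om_{i_1}.
\]
Using Stokes' theorem on the square, the resulting variation breaks into a telescoping sum of iterated integrals in which the $(j,j+1)$-consecutive factors collapse into a contribution pairing against $\om_{i_{j+1}} \wedge \om_{i_j}$, multiplied by an iterated integral in the remaining forms. The integrability condition \eqref{intcond} is designed precisely to make the sum of these terms vanish for every $j$, so the variation is zero and $\rho$ maps into $L(\mc M)$. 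This argument extends to elements of $B(\mc M) = \mc O_{\mc M} \otimes V(\mc M)$ by treating the $\mc O_{\mc M}$ factor as multiplication by endpoint-evaluated functions and checking that exact-form contributions reduce to such boundary functions.

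For injectivity I would use the classical linear independence of iterated integrals: since $\om_1, \ldots, \om_k$ form a basis of $\Hdr^1(\mc M)$, one can find, for any fixed bar monomial $[\om_{i_m}|\cdots|\om_{i_1}]$, a small path (or concatenation of small paths tangent to carefully chosen directions) which separates this monomial from the others under the iterated integral pairing. Hence if $\rho(\xi) = 0$ then all coefficients in $\xi$ must vanish. For surjectivity I would take any homotopy invariant iterated integral of $1$-forms $\nu_1, \ldots, \nu_m$, decompose each $\nu_i = \sum_j a_{ij} \om_j + df_i$ with $f_i \in \mc O_{\mc M}$, and expand the iterated integral accordingly. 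The exact parts integrate to boundary terms in $\mc O_{\mc M}$, while the closed parts give an element of $\bigoplus_m \Hdr^1(\mc M)^{\otimes m}$ whose homotopy invariance, via the same Stokes' computation as above, forces the integrability condition, so it lies in $V(\mc M)$.

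The main obstacle is the interplay between the $\mc O_{\mc M}$ factor and the iterated-integral functional: one must carefully track how endpoint-dependent boundary terms arise from exact forms and show that these precisely account for the $\mc O_{\mc M}$ coefficients in $B(\mc M)$, so that the decomposition into $\mc O_{\mc M} \otimes V(\mc M)$ matches the decomposition of $L(\mc M)$ into its endpoint-dependent and purely cohomological parts. Once this bookkeeping is handled, the isomorphism $\rho$ follows from the classical $\pi_1$-de Rham theorem of Chen.
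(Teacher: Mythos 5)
First, note that the paper does not prove this proposition: it is quoted from Brown \cite{BrownMZVPMS} and ultimately rests on Chen's $\pi_1$-de Rham theorem, so your proposal can only be measured against that standard argument. Your overall strategy (well-definedness via Stokes and the integrability condition, injectivity via linear independence of iterated integrals, surjectivity by reducing arbitrary forms to the chosen basis) is indeed the shape of Chen's proof, and the well-definedness step is essentially right, with one caveat: the integrability condition \eqref{intcond} as stated lives in $\Hdr^1(\mc M)^{\otimes m-i-1}\otimes \Hdr^2(\mc M)\otimes\cdots$, i.e.\ it only forces the relevant $2$-form to vanish in cohomology, whereas your Stokes computation needs the actual $2$-form $\sum_I c_I\,\om_{i_m}\otimes\cdots\otimes(\om_{i_{j+1}}\w\om_{i_j})\otimes\cdots$ to vanish, or its exactness to be absorbed into lower-length terms. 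For $\m_{0,4}$ and $\m_{0,5}$ this is harmless because the subalgebra of forms generated by the $d\log u_{ij}$ is isomorphic to $\Hdr^{\bullet}$ (Arnold, Orlik--Solomon), but it is a point you must say out loud.

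The genuine gap is in surjectivity. An element of $L(\mc M)$ is a homotopy invariant iterated integral of arbitrary holomorphic $1$-forms $\nu_i\in\Omega^1(\mc M)$, and on the two-dimensional space $\m_{0,5}$ such forms need not be closed; your decomposition $\nu_i=\sum_j a_{ij}\om_j+df_i$ is therefore unavailable in general, and the proposed expansion does not get off the ground. Chen's actual argument handles non-closed forms by working with $\HH^0$ of the full reduced bar complex $B(\Omega^{\bullet}\mc O_{\mc M})$, whose internal differential involves the $d\nu_i$, and then invoking a quasi-isomorphism between $\Omega^{\bullet}\mc O_{\mc M}$ and the model generated by the logarithmic forms $\om_{ij}$; this is exactly the content of the isomorphism $B(\mc M)\simeq \HH^0(B(\Omega^{\bullet}\mc O_{\mc M}))$ quoted just above the proposition, and your proof cannot bypass it. The injectivity step is also only a sketch: individual bar monomials are not homotopy invariant, so you cannot isolate one monomial by testing against a single well-chosen path; the correct route is the Ree--Chen linear independence of iterated integrals as functions on the space of \emph{all} paths (proved by induction on length using the deconcatenation coproduct), which applies here because $\rho(\xi)=0$ in $L(\mc M)$ means vanishing on every path, not merely on every homotopy class. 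These repairs are all available in the literature, but as written the proposal does not yet constitute a proof.
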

\begin{rem}In particular for any such $\gamma$ homotopically equivalent to zero,
  we have for all $\sum_I c_I [\om_{i_m}|\cdots|\om_{i_1}] $ in $ V(\mc M) $:
\[\sum_{I} c_I\iti_{\gamma} \om_{i_m}
\circ \cdots \circ \om_{i_1} = 0
\]
\end{rem}
\subsection{Bar Construction on $\m_{0,4}$, symmetry and hexagon
  relations}
Here, we will show how the symmetry relations \eqref{I} and the hexagon \eqref{II}
relations are related to the bar construction on $\m_{0,4}$.

First of all we should remark that $B(\m_{0,4})$ is extremely
simple. 
\begin{prop}Let $\om_0$ and $\om_1$ denote respectively the
  differential $1$-form, in $\Om^1(\m_{0,4})$, $\frac{\dt}{t}$ and
  $\frac{\dt}{t-1}$.

Then, any element $[\om_{\ve_n}|\cdots|\om_{\ve_1}]$ with $\ve_i$ in
$\{0,1\}$ is an element of $V(\m_{0,4})$. Moreover, the family of
these elements is a basis of $V(\m_{0,4})$.
\end{prop}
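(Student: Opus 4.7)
The plan is to exploit the fact that $\m_{0,4}$ is an affine curve, so $\Hdr^2(\m_{0,4})=0$, which trivializes the integrability condition \eqref{intcond} and forces $V_m(\m_{0,4})$ to be the entire tensor power $\Hdr^1(\m_{0,4})^{\otimes m}$.

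First I would recall that under the identification $\m_{0,4}\simeq \p^1\sm\{0,1,\infty\}$ the space is a smooth affine curve, and that for a smooth affine variety of dimension $1$ the algebraic de Rham cohomology vanishes in degrees $\geqs 2$. This gives $\Hdr^2(\m_{0,4})=0$. Second, I would establish that $\om_0=\dt/t$ and $\om_1=\dt/(t-1)$ form a basis of $\Hdr^1(\m_{0,4})$: for $\p^1$ with $n$ punctures, $\Hdr^1$ is generated by the logarithmic differentials at the punctures modulo the single residue relation $\sum_i \om_{p_i}=0$; here this gives dimension $2$ with basis $(\om_0,\om_1)$ (one can also check directly that a rational $1$-form on $\p^1$ regular on $\m_{0,4}$ decomposes uniquely modulo exact forms as a combination of $\om_0$ and $\om_1$).

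Granted these two facts, for any tensor
\[
\sum_{I=(i_1,\ldots,i_m)} c_I\,\om_{i_m}\otimes\cdots\otimes \om_{i_1}\in \Hdr^1(\m_{0,4})^{\otimes m}
\]
and any $1\leqs j\leqs m-1$, the wedge product $\om_{i_{j+1}}\w\om_{i_j}$ lies in $\Hdr^2(\m_{0,4})=0$, so every map $\w_j$ is identically zero. Consequently the integrability condition \eqref{intcond} is automatic, and therefore
\[
V_m(\m_{0,4})=\bigcap_{j=1}^{m-1}\ker(\w_j)=\Hdr^1(\m_{0,4})^{\otimes m}.
\]
In particular every bar symbol $[\om_{\ve_n}|\cdots|\om_{\ve_1}]$ with $\ve_i\in\{0,1\}$ belongs to $V(\m_{0,4})$.

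Finally, since $(\om_0,\om_1)$ is a $k$-basis of $\Hdr^1(\m_{0,4})$, the family of pure tensors $\om_{\ve_m}\otimes\cdots\otimes\om_{\ve_1}$ indexed by $(\ve_1,\ldots,\ve_m)\in\{0,1\}^m$ is a basis of $\Hdr^1(\m_{0,4})^{\otimes m}$, hence of $V_m(\m_{0,4})$; summing over $m\geqs 0$ gives the claimed basis of $V(\m_{0,4})$. There is no real obstacle in this argument; the only point to be careful about is correctly invoking the vanishing of $\Hdr^2$ for the affine curve $\m_{0,4}$, which is what makes the bar construction on $\m_{0,4}$ as combinatorially transparent as stated.
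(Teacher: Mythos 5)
Your proposal is correct and follows essentially the same route as the paper: the paper simply observes that $\om_0\w\om_1=0$ (since $\m_{0,4}$ is a curve), so the integrability condition \eqref{intcond} is vacuous and $V_m(\m_{0,4})=\Hdr^1(\m_{0,4})^{\otimes m}$, with the basis then coming from $(\om_0,\om_1)$ being a basis of $\Hdr^1(\m_{0,4})$. Your appeal to $\Hdr^2(\m_{0,4})=0$ for an affine curve is just the cohomological restatement of the same vanishing.
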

\begin{proof}
As $\om_0 \w \om_1=0$, the integrability condition \eqref{intcond} is
automatically 
satisfied, so any element $[\om_{\ve_n}|\cdots|\om_{\ve_1}]$ ($\ve_i=0,1$)
is an element of $V(\m_{0,4})$. Moreover, as $(\om_0,\om_1)$ is a basis of
$\Hdr^1(\m_{0,4})$, the elements $[\om_{\ve_n}|\cdots|\om_{\ve_1}]$ form a basis
of $V(\m_{0,4})$.
\end{proof}

Sending $X_0$ to $\om_0$ and $X_1$ to $\om_1$ gives a one to one correspondence 
between words $W=X_{\ve_n} \cdots X_{\ve_1}$ in $\ax$ and the elements
$[\om_{\ve_n}|\cdots|\om_{\ve_1}]$ of the
previous basis of $V(\m_{0,4})$. This correspondence allows us to identify
$V(\m_{0,4})$ with the graded dual of $\UF$, 
\[
V(\m_{0,4})\simeq (\UF)^{*}.
\]   
The word $W=X_{\ve_n} \cdots X_{\ve_1}$ is sent to its dual 
$W^*=\om_{W}=[\om_{\ve_n}|\cdots|\om_{\ve_1}]$.

\begin{rem}Let $\alpha$ and $\beta$ be two paths in a variety with
$\alpha(1)=\beta(0)$. We will denote by  $\beta  \circ \alpha$ the composed path
beginning with $\alpha$ and ending with $\beta$. 

The iterated integral of
$\om=\om_n \cdots \om_1$ along $\beta \circ \alpha$ is then equal to 
\begin{equation}\label{pathcompo}
\sum_{k=0}^n \left(
\iti_{\beta} \om_n \circ \cdots \circ \om_{n-k+1}
\right)
\left(
\iti_{\alpha} \om_{n-k} \circ \cdots \circ \om_1
\right).
\end{equation}
\end{rem}

Following \cite{BrownMZVPMS} and considering the three dihedral structures on
$\m_{0,4}$, one can define $6$ tangential base points: $\vec{01}$,
$\vec{10}$, $\vec{1\infty}$, $\vec{\infty 1}$,$\vec{\infty 0}$ and
$\vec{0\infty}$. Let $p$ denote the path beginning at the tangential base
point $\vec{01}$ and ending at $\vec{10}$ defined by $t \mapsto t$ and
let $p^{-1}$ denote its 
inverse $t \mapsto 1-t$.

If $\gamma$ is a path, starting at  a tangential
base point $\vec{P}$ (and/or ending at a tangential base point
$\vec{P'}$) an iterated integral $\iti \om$ may be 
divergent. However, one can give (as in \cite{BrownMZVPMS}) a value
to that divergent integral; we speak of the regularized iterated integral. 

If $W$ is a word in $\xay$, the iterated integral $\iti_p \om_W$ is
convergent and is equal to $(-1)^{\de(W)}\zeta(W)$. If $W$ is a word beginning 
by $X_1$ and/or ending by $X_0$ (that is in $\ax
\sm \xay$), then the regularized iterated integral $\iti_p \om_W$ is
equal to $(-1)^{\de(W)}\zsha(W)$. 

We may, thereafter, omit the term
\emph{regularized} in the expressions ``regularized iterated
integral'' or ``regularized homotopy invariant iterated integral''.

\begin{thm}\label{IMZVgeom}
The relation \eqref{I} is equivalent to the family of relations
\[
\forall W\in \ax \qquad 
\iti_{p\circ p^{-1}} \om_{W} =0,
\]
which is exactly the family \eqref{IMZV}.
\end{thm}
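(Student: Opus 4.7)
The plan is to expand $\iti_{p \circ p^{-1}} \om_W$ using the path composition formula \eqref{pathcompo}, then apply the change of variable $s = 1-t$ to convert the $p^{-1}$-factor into a $p$-factor, and finally match the result term-by-term with the family \eqref{IMZV}. Since Corollary \ref{IMZVcomb} already provides the equivalence \eqref{IMZV} $\Leftrightarrow$ \eqref{I}, this completes the chain. The independent fact that $p \circ p^{-1}$ is a contractible loop at $\vec{10}$ and $\om_W \in V(\m_{0,4})$ ensures, via the remark following the isomorphism $\rho : B(\m_{0,4}) \to L(\m_{0,4})$, that $\iti_{p \circ p^{-1}} \om_W = 0$ automatically, so the content of the theorem is really the identification of these vanishings with \eqref{IMZV}.

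Concretely, I would first write
\[
\iti_{p \circ p^{-1}} \om_W = \sum_{\substack{U_1, U_2 \in \ax \\ U_1 U_2 = W}} \iti_p \om_{U_1} \cdot \iti_{p^{-1}} \om_{U_2},
\]
using \eqref{pathcompo} with $\alpha = p^{-1}$ and $\beta = p$. For the second factor I would compute the pullbacks $(p^{-1})^* \om_0 = \om_1$ and $(p^{-1})^* \om_1 = \om_0$, where the sign coming from $d(1-s) = -ds$ cancels exactly against the sign appearing when one rewrites $\frac{ds}{s-1}$ as $-\frac{ds}{1-s}$. Since the oriented simplex $\{0 < s_1 < \cdots < s_m < 1\}$ is identical to the one used for $p$, the identity $\iti_{p^{-1}} \om_{U_2} = \iti_p \om_{\theta(U_2)}$ holds at the regularized level. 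Substituting $\iti_p \om_V = (-1)^{\de(V)} \zsha(V)$ in both factors then reproduces exactly the left-hand side of \eqref{IMZV}, and Corollary \ref{IMZVcomb} closes the equivalence with \eqref{I}.

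The one point that demands care is the compatibility of the shuffle regularization with $t \mapsto 1-t$: the change of variable swaps the tangential basepoints $\vec{01}$ and $\vec{10}$, and one must verify that the regularized values obtained from the asymptotic expansions at each of the two endpoints transform along $\theta$, so that the identity $\iti_{p^{-1}} \om_{U_2} = \iti_p \om_{\theta(U_2)}$ survives after regularization and is not merely valid for the convergent words in $\xay$. Once this compatibility is isolated, the remaining steps are essentially bookkeeping: splitting words at every internal position, reading off signs in terms of $\de$, and matching them against the summand of \eqref{IMZV}.
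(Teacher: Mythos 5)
Your proposal is correct, and its computational core --- expanding $\iti_{p\circ p^{-1}}\om_W$ via \eqref{pathcompo}, computing $(p^{-1})^*(\om_\ve)=\om_{1-\ve}$, and substituting $\iti_p\om_V=(-1)^{\de(V)}\zsha(V)$ to recover the summands of \eqref{IMZV} --- is exactly the second half of the paper's proof. Where you diverge is in how the equivalence with \eqref{I} is established. The paper does not invoke Corollary \ref{IMZVcomb} at all: it goes back to the KZ equation, uses the uniqueness of the normalized solution $g_1$ to write $g_1(u)=g_1(u)\pkz(X_0,X_1)\pkz(X_1,X_0)$, and identifies the product $\pkz(X_0,X_1)\pkz(X_1,X_0)$ with the generating series $\sum_W(\iti_{p\circ p^{-1}}\om_W)W$ by interpreting each factor as the comparison series along $p$ and $p^{-1}$ respectively; the equivalence of \eqref{I} with the vanishing of all loop integrals then falls out of comparing coefficients, with homotopy triviality of $p\circ p^{-1}$ supplying the vanishing. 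Your route instead closes the loop through the purely combinatorial Corollary \ref{IMZVcomb}: since the family of loop integrals is term-by-term identical to \eqref{IMZV}, and \eqref{IMZV} is already known to be equivalent to \eqref{I}, the claimed equivalence follows. This is logically sound and shorter, since the algebraic work was done in Section 2, but it bypasses the geometric point the section is making, namely that \eqref{I} \emph{arises} from analytic continuation along a null-homotopic loop rather than merely being consistent with it. Your flag about the compatibility of the shuffle regularization with $t\mapsto 1-t$ (the swap of tangential basepoints $\vec{01}$ and $\vec{10}$) is well taken; the paper performs the same pullback computation without explicitly addressing it, so you are, if anything, more careful than the source on this point.
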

\begin{proof}
Considering the KZ equation \eqref{KZ}
\[
\frac{\partial g}{\partial u} = 
\left(\frac{X_0}{u}+\frac{X_1}{u-1}
\right)\cdot g(u)
\]
and the two normalized solutions at $0$ and $1$, $g_0$ and $g_1$,
$\pkz(X_0,X_1)$ is the unique element in $\UF$ such that 
\[
g_0(u)=g_1(u)\pkz(X_0,X_1).
\] 
Using the symmetry of the situation we also have
\[
g_1(u)=g_0(u)\pkz(X_1,X_0).
\]
The equation \eqref{I} comes from the uniqueness of such a solution
normalized at $1$:
\begin{equation}\label{normsolat1}
g_1(u)=g_1(u)\pkz(X_0,X_1)\pkz(X_1,X_0).
\end{equation}
The elements $\pkz(X_0,X_1)$ and $\pkz(X_1,X_0)$ can be expressed
using regularized iterated integrals as 
\[
\pkz(X_0,X_1)=\sum_{W \in \ax} \left(\iti_{p} \om_W \right)W
\]
and 
\[
\pkz(X_1,X_0)=\sum_{W \in \ax} \left(\iti_{p^{-1}} \om_W \right)W.
\]
Equation \eqref{normsolat1}  corresponds to the comparison of the normalized
solution $g_1$ with the solution given by analytic continuation of $g_1$ along
$p\circ p^{-1}$. The product 
\[
\pkz(X_0,X_1)\pkz(X_1,X_0)
\]
is then the series
\[
\sum_{W \in \ax} \left(\iti_{p \circ p^{-1}} \om_W \right)W.
\]
As the path $p \circ p^{-1}$ is homotopically equivalent to $0$, all the previous
  iterated integrals (for $W \neq \emptyset$) are $0$. We deduce that \eqref{I} is
  equivalent to  
\[
\forall W\in \ax \qquad 
\iti_{p\circ p^{-1}} \om_{W} =0.
\]

Now, fix any $W=X_{\ve_n} \cdots X_{\ve_1}$ in $\ax$ and compute the regularized iterated
integral $\iti_{p\circ p^{-1}} \om_{W} $. Using \eqref{pathcompo}, we have 
\[
\iti_{p\circ p^{-1}} \om_{W} = \sum_{k=0}^{n}\left( \iti_p \om_{\ve_n} \circ
\cdots \circ \om_{\ve_{n-k+1}}\right)\left( \iti_{p^{-1}} \om_{\ve_{n-k}} \circ
\cdots \circ \om_{\ve_1}\right).
\]
Setting $U_1=X_{\ve_n} 
\cdots  X_{\ve_{n-k+1}}$ and $U_2=X_{\ve_{n-k}} 
\cdots X_{\ve_1}$, we have 
\[
\iti_p \om_{\ve_n} \circ
\cdots \circ \om_{\ve_{n-k+1}} = (-1)^{\de(U_1)} \zsha(U_1).
\]
As $p^{-1}$ is given by $t \mapsto 1-t$, we have for $\ve$ in $\{0,1\}$
\[
(p^{-1})^*(\om_{\ve}) =\om_{1-\ve}.
\] 
Moreover, as $p^*(\om_{\ve})=\om_{\ve}$, one computes
\begin{align*}
 \iti_{p^{-1}} \om_{\ve_{n-k}} \circ
\cdots \circ \om_{\ve_1} &= 
\int_{0<t_1 < \ldots <t_{n-k}}
(p^{-1})^*(\om_{\ve_1} (t_1))\w \cdots \w
(p^{-1})^*(\om_{\ve_{n-k}}(t_{n-k})) \\
&= 
\int_{0<t_1 < \ldots <t_{n-k}}
\om_{1-\ve_1} (t_1)\w \cdots \w
\om_{1-\ve_{n-k}}(t_{n-k}) \\
&=
 \iti_{p} \om_{1-\ve_{n-k}} \circ
\cdots \circ \om_{1-\ve_1} =\iti_p \om_{\theta (U_2)},
\end{align*}
where $\theta$ exchanges $X_0$ and $X_1$.
Finally, we obtain
\[
\iti_{p^{-1}} \om_{U_2}=\iti_p \om_{\theta(U_2)}=
(-1)^{\de(\theta(U_2))}\zsha(\theta(U_2)) 
\]
and 
\[
0=\iti_{p\circ p^{-1}} \om_{W} = \sum_{U_1U_2=W} (-1)^{\de(U_1)}
\zsha(U_1)(-1)^{\de(\theta(U_2))}\zsha(\theta(U_2)) 
\]
which is exactly the relation \eqref{IMZV} for the word $W$.
\end{proof}

Now, let $c$ be the infinitesimal half circle around $0$ in the lower half
plane, connecting the tangential base point $\vec{0\infty}$ and
$\vec{01}$. The path $c$ can be seen as the limit when $\ve $ tends to $0$ of
$c_{\ve} : t \mapsto \ve e^{i(\pi+t\pi)}$.

 We have a natural $3$-cycle on
$\m_{0,4}$ given by $\tau : t \mapsto \frac{1}{1-t}$. Let $\gamma$ be the
path $ c \circ \tau^2(p)\circ\tau^2(c)\circ \tau (p) \circ \tau(c)\circ p$.

\begin{thm}\label{IIMZVgeom}
The relation \eqref{II} is equivalent to the family of relations
\[
\forall W\in \ax \qquad 
\iti_{\gamma} \om_{W} =0
\]
which is exactly the family \eqref{IIMZV}.
\end{thm}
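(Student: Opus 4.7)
The plan is to mirror the proof of Theorem \ref{IMZVgeom}: identify each factor of the left-hand side of \eqref{II} with a (regularized) iterated integral along a segment of $\gamma$, combine them via the path composition formula \eqref{pathcompo}, and then invoke homotopy invariance of $\iti$ in $\m_{0,4}$ together with the fact that $\gamma$ is contractible on the three-punctured sphere.

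First I would express each factor of
\[
P_3 = e^{i\pi X_0}\pkz(X_{\infty},X_0)e^{i\pi X_{\infty}}\pkz(X_1,X_{\infty})e^{i\pi X_1}\pkz(X_0,X_1)
\]
as a generating series of iterated integrals. The rightmost factor is already $\sum_W (\iti_p \om_W)\, W$ by Theorem \ref{IMZVgeom}. Since $\tau : t \mapsto 1/(1-t)$ cyclically permutes the three punctures of $\m_{0,4}$ (and the associated tangential base points), pulling back the KZ $1$-form by $\tau$ and $\tau^2$ shows that $\pkz(X_1,X_{\infty})$ and $\pkz(X_{\infty},X_0)$ are the analogous series along $\tau(p)$ and $\tau^2(p)$; rewriting them in the generators $X_0, X_1$ via $X_{\infty} = -X_0 - X_1$ reproduces exactly the signed combinatorial expansion of Proposition \ref{C3Phi}. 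Similarly, the factors $e^{i\pi X_0}$, $e^{i\pi X_{\infty}}$, $e^{i\pi X_1}$ are the local KZ monodromies at $0$, $\infty$, $1$, and a direct computation of the regularized iterated integrals along the infinitesimal half-circles $c$, $\tau^2(c)$, $\tau(c)$ recovers precisely the coefficients of Proposition \ref{C3exp} with $\mu = 2i\pi$.

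Next I would apply \eqref{pathcompo} iteratively to the six-fold composition
\[
\gamma = c \circ \tau^2(p) \circ \tau^2(c) \circ \tau(p) \circ \tau(c) \circ p
\]
to obtain
\[
\iti_{\gamma} \om_W = \sum_{W_1\cdots W_6 = W} (\iti_c \om_{W_1})(\iti_{\tau^2(p)} \om_{W_2})(\iti_{\tau^2(c)} \om_{W_3})(\iti_{\tau(p)} \om_{W_4})(\iti_{\tau(c)} \om_{W_5})(\iti_p \om_{W_6}).
\]
By the previous paragraph this sum is exactly the coefficient $C_{3,W}$ of $W$ in $P_3$ given by Theorem \ref{rel2coef}, specialised to $\mu = 2i\pi$ and $Z_W = (-1)^{\de(W)}\zsha(W)$. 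Hence $P_3 = 1$ is equivalent to the family $\iti_{\gamma}\om_W = 0$ for every nonempty $W \in \ax$, which unpacked is exactly \eqref{IIMZV}. Finally, $\gamma$ is the standard hexagonal contour on $\m_{0,4} \simeq \p^1 \sm \{0,1,\infty\}$ winding once around each of the three punctures, hence homotopically trivial, so the relations $\iti_\gamma \om_W = 0$ hold by homotopy invariance of homotopy-invariant iterated integrals.

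The main obstacle is the pullback bookkeeping around $\tau$: since $\tau^*$ mixes $\om_0$ and $\om_1$ in accordance with $X_0 + X_1 + X_{\infty} = 0$, the identification of $\iti_{\tau(p)}\om_W$ and $\iti_{\tau^2(p)}\om_W$ with the coefficients $\cax(\pkz(X_1,X_{\infty})|W)$ and $\cax(\pkz(X_{\infty},X_0)|W)$ of Proposition \ref{C3Phi} must be verified carefully, and the same care is needed to turn the regularized half-circle integrals along $c, \tau(c), \tau^2(c)$ into the exponential coefficients of Proposition \ref{C3exp}, in particular tracking signs, factors of $i\pi$, and the tangential base points at the endpoints of each segment so that the composition is well-defined.
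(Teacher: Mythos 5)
Your proposal follows essentially the same route as the paper's proof: express each of the six factors of the left-hand side of \eqref{II} as a generating series of regularized iterated integrals along the segments $c$, $\tau^2(p)$, $\tau^2(c)$, $\tau(p)$, $\tau(c)$, $p$ of $\gamma$, combine them via \eqref{pathcompo} to identify $\iti_{\gamma}\om_W$ with $C_{3,W}$, and conclude from the contractibility of $\gamma$. The one caveat is that the ``pullback bookkeeping'' you defer at the end is precisely where the paper's proof spends most of its effort --- the explicit computation of $c_{\ve}^*$, $\tau(c_{\ve})^*$, $\tau^2(c_{\ve})^*$ on $\om_0,\om_1$, and an induction on $|U|$ matching $[(\tau^2)^*(\om_{\ve_n})|\cdots|(\tau^2)^*(\om_{\ve_1})]$ with the signed sum over $\decx(U,X_0)$ of Proposition \ref{C3Phi} --- so a complete write-up would need to carry out those verifications.
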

\begin{proof}
Comparing the six different normalized solutions of \eqref{KZ} at the six
different base points leads to six equations. Combining these equations, one
obtains \eqref{II} via the relation
\begin{equation}\label{normsolat0II}
g_0(u)= g_0(u)e^{i\pi X_0}\pkz(X_{\infty},X_0)e^{i\pi X_{\infty}}
\pkz(X_1,X_{\infty})e^{i\pi X_1}\pkz(X_0,X_1)
\end{equation}
where exponentials are coming from the relation between the solutions at the based
points $\vec{0\infty}$ and $\vec{01}$ (resp. $\vec{10}$ and $\vec{1\infty}$,
$\vec{\infty1}$ and $\vec{\infty0}$), that is, from the monodromy around $0$, $1$ and
$\infty$. 

Putting the six different relations together in order to get the previous equation  
is the same as comparing the solution $g_0$ with the analytic continuation of $g_0$
along any path starting at $\vec{01}$, joining the
other tangential base points  $\vec{10}$, $\vec{1\infty}$, $\vec{\infty1}$,
$\vec{\infty0}$, $\vec{0\infty}$ in that order and ending at $\vec{01}$ ;
staying all the time in the lower half plan. Such a path is homotopically
equivalent to $\gamma$.

Thus, equation \eqref{normsolat0II} gives a relation between $g_0$ and the
solution obtained
from $g_0$ by analytic continuation along $\gamma$. Then, the product in $\UF$ in the
R.H.S of \eqref{normsolat0II} can be 
expressed using homotopy invariant iterated integrals as
\begin{multline*}
e^{i\pi X_0}\pkz(X_{\infty},X_0)e^{i\pi X_{\infty}}
\pkz(X_1,X_{\infty})\\e^{i\pi X_1}\pkz(X_0,X_1) = \sum_{W \in \ax} \left(
  \iti_{\gamma} \om_W \right)W .
\end{multline*}

As $\gamma$ is homotopically equivalent to $0$, for any word $W$ in $\ax$, one has
\[
\iti_{\gamma} \om_{W}=0.
\]
This proves the first part of the theorem. 

Using the decomposition of iterated integrals on a composed path (Equation
\eqref{pathcompo}), we have 
\begin{multline*}
\forall W\in \ax \qquad 
\iti_{\gamma} \om_{W} =
\sum_{\substack{U_1, \ldots, U_6\\ U_1 \cdots U_6=W}} 
\iti_{c} \om_{U_1} \iti_{\tau^2(p)} \om_{U_2} \iti_{\tau^2(c)} \om_{U_3}
\\
\iti_{\tau(p)} \om_{U_4} \iti_{\tau(c)} \om_{U_5} \iti_{p} \om_{U_6} . 
\end{multline*}

Thus, in order to show that the family of relations 
\[
\forall W \in \ax \qquad \iti_{\gamma}\om_W=0
\]
gives exactly the family of relations \eqref{IIMZV}, it
is enough to show that  for any $U$ in $\ax$,
\begin{align*}
&\iti_{c} \om_{U}=\cax(e^{i\pi X_0}|U),\qquad \iti_{\tau^2(p)}
\om_{U}=\cax(\pkz(X_{\infty},X_0)|U),\\ 
&\iti_{\tau^2(c)}\om_{U}=\cax(e^{i\pi X_{\infty}}|U),\qquad
\iti_{\tau(p)}\om_{U} =\cax(\pkz(X_1,X_{\infty})|U),\\ 
&\iti_{\tau(c)} \om_{U}=\cax(e^{i\pi X_1}|U), \qquad 
\iti_{p} \om_{U}=\cax(\pkz(X_0,X_1)|U).
\end{align*}
In order to compute the iterated integral along $c$, $\tau(c)$ and $\tau^2(c)$,
it is enough to compute the limit when $\ve$ tends to $0$ of the iterated
integral along $c_{\ve}$, $\tau(c_{\ve})$ and $\tau^2(c_{\ve})$. As
\[
c_{\ve}^*(\om_0)=i\pi \dt\qquad \mx{and} \qquad c_{\ve}^*(\om_1)= 
\ve \frac{-i\pi  e^{i(\pi+\pi t)}\dt}{1-\ve  e^{i(\pi+\pi t)}},
\]
 the iterated integral $
\iti_{c_{\ve}}\om_U$ tends to $0$ except if $U=X_0^n$ and then
$\iti_{c_{\ve}}\om_{X_0^n}=\frac{(i\pi)^n}{n!}$ for all $\ve$. Thus, we have 
\[
\iti_{c} \om_{U}=\cax(e^{i\pi X_0}|U).
\]

Similarly we have 
\[
\tau(c_{\ve})^*(\om_1)=i\pi \frac{\dt}{1-\ve  e^{i(\pi+\pi t)}} \qquad 
\mx{and} \qquad 
\tau(c_{\ve})^*(\om_0)= 
\frac{\ve  i\pi e^{i(\pi+\pi t)}\dt}{1-\ve  e^{i(\pi+\pi t)}}.
\]
The iterated integral $
\iti_{\tau(c_{\ve})}\om_U$ tends to $0$ unless $U=X_1^n$, and then
$\iti_{c_{\ve}}\om_{X_0^n}$ tends to $\frac{(i\pi)^n}{n!}$ when $\ve$ tends to
$0$. Thus, we have 
\[
\iti_{\tau(c)} \om_{U}=\cax(e^{i\pi X_1}|U).
\]
Computing $\tau^2(c_{\ve})^*$, we have
\[
\tau^2(c_{\ve})^*(\om_0)=-i\pi \frac{\dt}{1-\ve  e^{i(\pi+\pi t)}} \qquad 
\mx{and} \qquad 
\tau^2(c_{\ve})^*(\om_1)= -i\pi \dt .
\]
Then, we find that the limit when $\ve$ tends to $0$ of $
\iti_{\tau^2(c_{\ve})}\om_U$ is $\frac{(-i\pi)^{|U|}}{|U|!}$, which gives
\[
\iti_{\tau^2(c)}\om_{U}=\cax(e^{i\pi X_{\infty}}|U).
\]

The equality 
\[
\iti_{p} \om_{U}=\cax(\pkz(X_0,X_1)|U)
\] 
is obvious. 

Cases of 
\[
\iti_{\tau(p)} \om_{U}\qquad \text{and} \qquad \iti_{\tau^2(p)} \om_{U}
\]
 are extremely similar and we will discuss only the last one. First, we should remark that
\[
(\tau^2)^*(\om_0)= -\om_0+\om_1 \qquad
\mx{and} \qquad
(\tau^2)^*(\om_1)=-\om_0.
\]

For $U=X_{\ve_n} \cdots X_{\ve_1}$ ($\ve_i =0,1$) we can rewrite the
iterated integral 
$\iti_{\tau^2(p)} \om_U$ as 
\[
\iti_p (\tau^2)^*(\om_{\ve_n}) \circ \cdots \circ (\tau^2)^*(\om_{\ve_1}).
\]
We will now prove by induction on $n=|U|$ that in $V(\m_{0,4})$
\begin{multline}\label{IIitip}
[ (\tau^2)^*(\om_{\ve_n})
| \cdots | (\tau^2)^*(\om_{\ve_1})]=\\
\sum_{(\mb V, \mb k) \in \decx(U,X_0)} 
(-1)^{|\mb V|}
\om_{X_0^{|V_1|} X_1^{k_1}X_0^{| V_2|} X_1^{k_2}\cdots
  X_0^{|V_p|}X_1^{k_p}} 
\end{multline}
which will give using Proposition \ref{C3Phi} the equality 
\[
\iti_{\tau^2(p)} \om_{U}=\cax(\pkz(X_0,X_1)|U).
\]

If $U=X_0$, the set $\decx(U,X_0)$ has $2$ elements $((X_0),(0))$ and
$((\emptyset),(1))$. Similarly, if $U=X_1$ then $\decx(U,X_0)$ has only one
element which is $((X_1),(0))$. In both cases \eqref{IIitip} is satisfied.

Let $U=X_{\ve_n} \cdots X_{\ve_1}$ be a word in $\ax$, and
let $\ve$ be in $\{0,1\}$. For the simplicity of notation, we
shall write $[\om_{\ve_n}|\cdots
|\om_{\ve_1}|\om_{\ve}]$ as
\[
[\om_U|\om_{\ve}]:=[\om_{\ve_n}|\cdots
|\om_{\ve_1}|\om_{\ve}].
\] 

We suppose now that 
\[U=U_1X_0 \qquad \qquad \text{with }|U_1|\geqs 1.
\]

We have a map from 
$\decx(U,X_0)$ to $\decx(U_1,X_0)$ that sends a decomposition 
\[(\mb V, \mb k)=
\left(
 (V_1, \ldots, V_p),(k_1, \ldots, k_p)
\right)
\] 
to 
\[
\left\{ 
\begin{array}{ll}
(\mb V, (k_1, \ldots , k_p -1))\qquad~ & \mx{if } k_p \neq 0 \\
((V_1, \ldots, V_p'), \mb k ) & \mx{if } k_p = 0 \mx{ and }V_p=V_p'X_0.
\end{array}
\right.
\]
Any decomposition $(\mb {V'}, \mb{k'})$ in  $\decx(U_1,X_0)$ has exactly two
preimages by this map. If one writes 
\[
U_1=V_1'X_0^{k_1'}\cdots V_1'X_0^{k_p'}
\]
then it leads to two decompositions of $U$ 
\[
((V_1',\ldots , V_p'), (k_1', \ldots, k_p'+1))
\qquad \mx{and} \qquad
((V_1',\ldots , V_p',X_0), (k_1', \ldots, k_p',0)).
\]

By induction we have in $V_{n-1}(\m_{0,4})$
\[
[
(\tau^2)^*(\om_{\ve_n})
| \cdots | (\tau^2)^*(\om_{\ve_2})]=
\sum_{\substack{(\mb{V'}, \mb{k'}) \in \\ \decx(U_1,X_0)}} 
(-1)^{|\mb{ V'}|}
\om_{X_0^{|V_1'|} X_1^{k_1'}\cdots
  X_0^{|V_p'|}X_1^{k_p'}}.
\]
We deduce from the previous equality and using the linearity of the tensor
product that $[(\tau^2)^*(\om_{\ve_n}) | \cdots | (\tau^2)^*(\om_{\ve_1})]$
is equal to  
\[
\sum_{\substack{(\mb{V'}, \mb{k'}) \in \\ \decx(U_1,X_0)}} 
(-1)^{|\mb{ V'}|}
[\om_{X_0^{|V_1'|} X_1^{k_1'}\cdots
  X_0^{|V_p'|}X_1^{k_p'}}|-\om_0+\om_1].
\]
This sum can be decomposed as 
\begin{multline*}
\sum_{\substack{(\mb{V'}, \mb{k'}) \in \\ \decx(U_1,X_0)}} 
(-1)^{|\mb{ V'}|+1}
[\om_{X_0^{|V_1'|} X_1^{k_1'}\cdots
  X_0^{|V_p'|}X_1^{k_p'}}|\om_0] + \\
\sum_{\substack{(\mb{V'}, \mb{k'}) \in \\ \decx(U_1,X_0)}} 
(-1)^{|\mb{ V'}|}
[\om_{X_0^{|V_1'|} X_1^{k_1'}\cdots
  X_0^{|V_p'|}X_1^{k_p'}}|\om_1].
\end{multline*}
The first term of the sum is equal to 
\[
\sum_{\substack{(\mb{V'}, \mb{k'}) \in \\ \decx(U_1,X_0)}} 
(-1)^{|\mb{ V'}|+1}
\om_{X_0^{|V_1'|} X_1^{k_1'}\cdots
  X_0^{|V_p'|}X_1^{k_p'}X_0} 
\]
and the second term is equal to 
\[
\sum_{\substack{(\mb{V'}, \mb{k'}) \in \\ \decx(U_1,X_0)}} 
(-1)^{|\mb{ V'}|}
\om_{X_0^{|V_1'|} X_1^{k_1'}\cdots
  X_0^{|V_p'|}X_1^{k_p'+1}} .
\]
 
The previous discussion on $\decx(U,X_0)$ tells us that adding the two sums  
above gives
\[
\sum_{(\mb V, \mb k) \in \decx(U,X_0)} 
(-1)^{|\mb V|}
\om_{X_0^{|V_1|} X_1^{k_1}X_0^{| V_2|} X_1^{k_2}\cdots
  X_0^{|V_p|}X_1^{k_p}} .
\] 
This gives \eqref{IIitip} when $U=U_1X_0$.

If $U=U_1X_1$ with $|U_1|\geqs 1$, we have a one to one correspondence between
$\decx(U_1,X_0)$ and $\decx(U,X_0)$ defined by 
\[
((V_1', \ldots, V_p'),(k_1',
\ldots, k_p'))\mapsto \left\{
\begin{array}{ll}
((V_1', \ldots, V_p',X_1),(k_1',\ldots, k_p',0))& \mx{if }k_p'>0\\
((V_1', \ldots, V_p'X_1),(k_1',\ldots, k_p'))&\mx{otherwise.}
\end{array} \right.
\] 
Then
\eqref{IIitip} follows by induction using the linearity of the tensor product. 
\end{proof}
\subsection{Bar Construction on $\m_{0,5}$ and the pentagon relations}
Here, we will show how the pentagon relations \eqref{III}  are related
to the bar construction on $\m_{0,5}$. 

The shuffle algebra $B(\m_{0,5})$ being much more complicated than
$B(\m_{0,4})$ we will first review some facts explained in
\cite{BrownMZVPMS}. We now fix a dihedral structure $\delta$, as described
in \cite{BrownMZVPMS}, on $\m_{0,5}$. We will used the ``standard'' dihedral
structure given by ``cyclic'' order on the marked points 
\[
z_1<z_2<z_3<z_4<z_5(<z_1) 
\] 
or with our normalization 
\[
0<xy<y<1<\infty.
\] 
This corresponds to a good choice of
coordinates to study the connected components of $\m_{0,5}(\R)$ such
that the marked points are in the order given by $\delta$. We will refer
to that component as the standard cell. 

More precisely, let $i$, $j$, $k$, $l$ denote distinct elements of $\{1,2,3,4,5\}$. The
cross-ratio $[i\,j|k\,l]$ is defined by the formula:
\[
[i\,j|k\,l]=\frac{z_i-z_k}{z_i-z_l}\frac{z_j-z_l}{z_j-z_k}.
\] 
Brown, in \cite[Sections 2.1 and 2.2]{BrownMZVPMS}, has defined coordinates on
$\m_{0,n}$, and more generally on an open $\m_{0,n}^{\delta}$ of the
Deligne-Mumford compactification of the moduli space of curves $\ol{\m_{0,n}}$, such that
\[
\m_{0,n}\subset\m_{0,n}^{\delta} \subset \ol{\m_{0,n}} .
\] 
These
coordinates respect the natural dihedral 
symmetry of the moduli spaces of curves. Applying his work to the case $n=5$,
let $i$ and $j$ be in 
$\{1,2,3,4,5\}$ such that $i$, $i+1$, $j$ and $j+1$ are distinct. We set
\[
u_{ij}=[i\,i+1|j+1\,j].
\]
In particular, the codimension $1$ components of $\partial \m_{0,n}$ contained in
$\m_{0,n}^{\delta}$ are given 
by $u_{ij}=0$ and the standard cell is contained in $\m_{0,n}^{\delta}$.

The coordinates $u_{ij}$ satisfy the relations
\begin{equation}\label{relcoord}
u_{ij}u_{im}+u_{kl}=1,
\end{equation}
for $k\equiv i-1 \mod 5$, $l \equiv i+1 \mod 5$
 and $m\equiv j+1 \mod 5$, all the indices being in $\{1,2,3,4,5\}$ and such
 that $u_{ij}$, $u_{im}$ and $u_{kl}$ are defined.

 In \cite{BrownMZVPMS}[Corollary 2.3],
these relations are given in terms of two sets of chords 
of a polygon (a pentagon for $\m_{0,5}$) and the picture corresponding to the
above relation is given below.
\[
\xymatrix@W=0ex@M=0ex@R=4ex@C=4ex{
 & & \bullet  \save[]+<0ex,2ex>*{i} \restore \ar@{--}[dddr] \ar@{--}[dddl]
\ar@{-}[drr]\ar@{-}[dll]& & \\
\bullet\save[]+<-2ex,0ex>*{k} \restore\ar@{-}[ddr] & & & &
 \bullet \save[]+<2ex,0ex>*{l}\restore \ar@{-}[ddl] \ar@{.}[llll] 
\\
& & & & \\
& \bullet \save[]+<-1ex,-1ex>*{m}\restore \ar@{-}[rr] & &
 \bullet \save[]+<1ex,-1ex>*{j} \restore
}
\]

Let $\om_{12}$, $\om_{23}$, $\om_{34}$, 
$\om_{45}$, $\om_{24}$ be the differential forms
\begin{multline*}
\om_{12} =\dd\log(u_{25})=\frac{\dx}{x}, \quad 
\om_{23}=\dd\log(u_{31}u_{41})=\frac{\dx}{x-1}, \quad \\
\om_{34}=\dd\log(u_{24}u_{41})=\frac{\dy}{y-1}, \quad 
\om_{45}=\dd \log(u_{35})=\frac{\dy}{y} \quad \\
 \text{and} \quad
\om_{24}=\dd \log(u_{41})=\frac{\dd(xy)}{xy-1}. 
\end{multline*}

If $W$ is a word in $\axb=\{ X_{34}, X_{45}, X_{24},
X_{12}, X_{23}\}^*$ with $|W|=n$, we will write $\om_W \in
\Hdr^1(\m_{0,5})^{\otimes n}$ for the bar symbol $[\om_{i_nj_n} | \cdots
|\om_{i_1j_1} ]$. Note that  the elements $\om_W$ for $W$ in $\axb$ are not 
all in $V(\m_{0,5})$; in general, only linear combinations of such symbols are 
in $V(\m_{0,5})$. 
\begin{exm}
The elements $[\om_{12}]$, $[\om_{23}]$ and $[\om_{12}|\om_{23}]$ are in
$V(\m_{0,5})$ even if  $[\om_{12}|\om_{45}]$ is not. However  $[\om_{12}|\om_{45}] +
[\om_{45}|\om_{12}]$ is in $V(\m_{0,5})$.
\end{exm}

Example 3.43 in \cite{BrownMZVPMS} (using  \cite[Thm. 3.38 and
Coro. 3.41]{BrownMZVPMS})
tells us that the exact sequence
\[
0 \lra  \C \llan X_{24} , X_{34},X_{45} \rran \lra \UB \lra  \C\llan
X_{12,}X_{23} \rran \lra 0
\]
is dual to the exact sequence
\[
0 \lra V(\m_{0,4}) \lra V(\m_{0,5}) \lra \C\langle \frac{\dd y}{y},  \frac{\dd
  y}{y-1}, \frac{x\dd y}{xy-1}\rangle  \lra 0
\]
which comes from the expression, in cubical coordinates, of the map $\m_{0,5}\lra \m_{0,4}$
which forgets the $4^{Th}$ point. Thus, the identification 
\[
\UB \simeq \C \llan X_{24} , X_{34},X_{45} \rran \rtimes \C\llan
X_{12,}X_{23} \rran
\]
is dual (as graded algebra) to 
\[
V(\m_{0,5}) \simeq V(\m_{0,4})  \otimes  \C \langle \frac{\dd y}{y},  \frac{\dd
  y}{y-1}, \frac{x\dd y}{xy-1}\rangle 
\]
and $V(\m_{0,5})$ is the graded dual $\UB^*$ of  $\UB$.

The graded  dual of the free non-commutative algebra of formal series 
\[
R=\C\llan X_{34}, X_{45}, X_{24},
X_{12}, X_{23}\rran 
\]
 is the shuffle algebra 
\[
T
:=\bigoplus_n \left( \C\om_{34}\oplus \C\om_{45}\oplus \C\om_{24}\oplus
\C\om_{12}\oplus \C\om_{23} \right)^{\otimes n}.
\]

Let $\Om$ be the element in $R \otimes \Hdr^1(\m_{0,5})$ defined by 
\[
\Om=X_{12}\otimes \om_{12} +X_{23}\otimes \om_{23} +X_{34}\otimes \om_{34}
+X_{45}\otimes \om_{45} +X_{24}\otimes \om_{24}.  
\] 
and
\[
\Exp(\Om):=\sum_{W\in \axb} W \otimes \om_{W} \in R \otimes T.
\] 
The element $\Exp(\Om)$ corresponds to the identity of $R$ and encodes the fact
that the dual of a word $W$ is $\om_{W}$. 
A word $W$ (seen in $\UB$) is written in the basis $B_4$ as
\[
W =\sum_{b_4\in B_4} l_{b_4,W} b_4.
\] 
Duality between $R$ and $T$ and between $\UB$ and $V(\m_{0,5})$ tells us
that, the basis $B_4^*=(b_4^*)_{b_4 \in B_4}$ of $V(\m_{0,5})$ dual to $B_4$ is given by
\[
\forall b_4 \in B_4 \qquad b_4^*=\sum_{W\in \axb} l_{b_4,W} \om_{W}.
\]

Using the projection $R \ra \UB$ one
can see $\Exp(\Om)$ in $\UB \otimes T$. Actually, by duality, $\Exp(\Om)$ lies
in $\UB \otimes V(\m_{0,5})$.  So, writing each $W$ in the basis $B_4$ leads to
the following expression of  
$\Exp(\Om)$ in $\UB\otimes V(\m_{0,5})$ 
\[
\Exp(\Om)=\sum_{b_4 \in B_4} b_4 \otimes b_4^*\quad \in  \UB
\otimes V(\m_{0,5}).
\] 
Thus, $\Exp(\Om)$ realized the identification between the graded dual of $\UB$
and $V(\m_{0,5})$ as was observed by Furusho in \cite{DSPfuru0}. This
discussion can be summarized by the following proposition.

\begin{prop}
We have a natural identification 
\[
\UB^* \simeq V(\m_{0,5}),
\]
$\UB^*$ being the graded dual of $\UB$. 

This identification gives a basis
$B_4^*$ of $V(\m_{0,5})$ dual to $B_4$ the basis of $\UB$ which comes from the
identification 
\[
\UB \simeq \C \llan X_{24} , X_{34},X_{45} \rran \rtimes \C\llan
X_{12,}X_{23} \rran.
\]
The basis $B_4^*=(b_4^*)_{b_4 \in B_4}$ is explicitly given for all $b_4$ in the
basis $B_4$ by 
\begin{equation}\label{baseB4*}
b_4^*=\sum_{W \in \axb} l_{b_4,W} \, \om_{W}.
\end{equation}
\end{prop}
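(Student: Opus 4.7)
The plan is to formalize the construction of $\Exp(\Om)$ sketched in the paragraphs immediately preceding the statement. First I would recall the tautological duality between the complete free algebra $R=\C\llan X_{34},X_{45},X_{24},X_{12},X_{23}\rran$ and the shuffle algebra $T=\bigoplus_n (\C\om_{34}\oplus \C\om_{45}\oplus \C\om_{24}\oplus \C\om_{12}\oplus \C\om_{23})^{\otimes n}$: the pairing $\langle W,\om_{W'}\rangle=\delta_{W,W'}$ identifies $T$ with the graded dual of $R$, and $\Exp(\Om)=\sum_{W\in \axb} W\otimes \om_W$ is the image of the identity endomorphism of $R$ under this identification. Next I would invoke Brown's Example 3.43 in \cite{BrownMZVPMS}, which asserts that the short exact sequence of complete algebras $0\to \C\llan X_{24},X_{34},X_{45}\rran \to \UB \to \C\llan X_{12},X_{23}\rran\to 0$ is dual to the short exact sequence $0\to V(\m_{0,4})\to V(\m_{0,5})\to \C\langle \dy/y,\dy/(y-1),x\dy/(xy-1)\rangle \to 0$. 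Combined with the tautological duality above applied to the subalgebra and quotient, this yields the desired graded identification $\UB^{*}\simeq V(\m_{0,5})$, compatible with the weight grading.

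To deduce the explicit formula for $b_4^*$, I would push $\Exp(\Om)$ through the surjection $R\to \UB$ and observe that the image actually lies in the subspace $\UB\otimes V(\m_{0,5})$ of $\UB\otimes T$, since by the duality just cited $V(\m_{0,5})$ is precisely the annihilator of the kernel $\mathcal R$ defining $\UB$. Writing each word $W\in \axb$ in the basis $B_4$ as $W=\sum_{b_4\in B_4} l_{b_4,W}\,b_4$ and substituting in the definition of $\Exp(\Om)$, an interchange of summation gives
\[
\Exp(\Om)=\sum_{W\in \axb}\Bigl(\sum_{b_4\in B_4} l_{b_4,W}\,b_4\Bigr)\otimes \om_W=\sum_{b_4\in B_4} b_4 \otimes \Bigl(\sum_{W\in \axb} l_{b_4,W}\,\om_W\Bigr).
\]
Since $\Exp(\Om)$ realises the canonical isomorphism $\UB^{*}\simeq V(\m_{0,5})$, reading off the coefficient of $b_4$ on the $\UB$ factor identifies its dual vector as $b_4^*=\sum_{W\in \axb} l_{b_4,W}\,\om_W$, which is \eqref{baseB4*}. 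The dual-basis property $\langle b_4,b_4'^{\,*}\rangle=\delta_{b_4,b_4'}$ then follows immediately from $\langle W,\om_{W'}\rangle=\delta_{W,W'}$ together with the fact that $(b_4)_{b_4\in B_4}$ is a basis.

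The main obstacle is verifying that each sum $\sum_{W} l_{b_4,W}\,\om_W$ does genuinely satisfy the integrability condition \eqref{intcond}, and hence belongs to $V(\m_{0,5})\subset T$. This is exactly the content of Brown's duality statement: the defining relations of $\UB$ among the $X_{ij}$ are, under the pairing, dual to the integrability conditions, so any linear combination of monomials $\om_W$ arising from re-expressing a single element of $\UB$ automatically fulfils \eqref{intcond}. Once this is granted, the finiteness assertions of Remark \ref{rembasis} ensure that each $b_4^*$ is a finite, well-defined linear combination of symbols $\om_W$, completing the proof.
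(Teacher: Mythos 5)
Your proposal is correct and follows essentially the same route as the paper: both rest on the tautological duality between $R$ and $T$ realized by $\Exp(\Om)$, the citation of Brown's Example 3.43 for the duality of the two exact sequences, and the interchange of summation after writing each $W$ in the basis $B_4$. Your added remark that the integrability condition \eqref{intcond} is dual to the defining relations of $\UB$ (so that each $b_4^*$ genuinely lands in $V(\m_{0,5})$) makes explicit a point the paper leaves to the phrase ``by duality,'' but it is the same argument.
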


Let $\wh{\m_{0,5}}$ be the universal covering of $\m_{0,5}$. A
multi-valued function on $\m_{0,5}$ is an analytic function on
$\wh{\m_{0,5}}$. 
Consider the formal differential equation on $\wh{\m_{0;5}}$
\[
\dd L = \Om L
\]
where $L$ takes values in $\UB$, whose coefficients are multi-valued
functions on $\m_{0,5}$. As in the case of the equation \eqref{KZ}, if
we fix either the value of $L$ at some point of $\m_{0,5}$ or its asymptotic
behavior at a tangential base point, then the solution is unique.

The irreducible components of codimension $1$ of $\partial\m_{0,5}$ in
$\ol{\m_{0,5}}$ are in one to one 
correspondence with the $2$-partitions of $\{z_1, z_2,z_3, z_4,z_5\}$ and
will be denoted as $z_{i_1}z_{i_2}|z_{i_3}z_{i_4}z_{i_5}$. These boundary
components are all isomorphic to $\ol{\m_{0,4}}$. Here, we will
only consider the following components $\bar D_{52}=z_1z_2|z_3z_4z_5$,
$\bar D_{13}=z_2z_3|z_4z_5z_1$, $\bar D_{24}=z_3z_4|z_5z_1z_2$,
$\bar D_{35}=z_4z_5|z_1z_2z_3$, $ \bar D_{41}=z_5z_1|z_2z_3z_4$ (we may use the
convention $\bar D_{ij}=\bar D_{ji}$).  One remarks that
those components are given by a partition that respect the dihedral
structure $\delta$ and the numbering $ \bar D_{ij}$ is coherent with the
notation of \cite{BrownMZVPMS}. We will write $D_{ij}\simeq
\m_{0,4}^{\delta}$ for the 
intersection of $\bar D_{ij}$ with $\m_{0,5}^{\delta}$
. The divisors $D_{ij}$ are given in the
dihedral coordinates by $u_{ij}=0$. Following  Brown, we have $5$
tangential base points (corresponding to the intersection of $2$
irreducible components) given by the triangulation of the polygon
corresponding to $\delta$; as we are working in $\m_{0,5}$, the polygon is a
pentagon, and a triangulation is given by two chords going out from a
single vertex, so one can number the triangulation by the number of its
vertex: precisely, one has 
\begin{align*}
P_3&=D_{35}\cap D_{13}, \qquad P_1=D_{13}\cap D_{41}, \qquad
P_4=D_{41}\cap D_{24}, 
\\ 
P_2&=D_{24}\cap D_{52}, \quad \text{ and }\quad
P_5=D_{52}\cap D_{35}.
\end{align*}
Let $L_{i}$  be the normalized solution at $P_i$ (see
\cite{BrownMZVPMS} Theorem 6.12).

Now, we fix a basis $B=(b)_{b \in B}$ of $\UB$ and its dual basis
$B^*=(b^*)_{b \in B}$ in $V(\m_{0,5})$. The description of the situation in
dimension $1$ and section 5.2 in \cite{BrownMZVPMS} shows that Theorem 6.27
of Brown's article in \cite{BrownMZVPMS} can be rewritten as follows.
\begin{prop}
For any tangential base point $P_i$, one can write
$L_i(z)$ as  
\[
\forall z \in \wh{\m_{0,5}} \quad
 L_i(z)=\sum_{b\in B} (\iti_{\gamma} b^*)b
\]
where $\gamma$ is a path from $P_i$ to $z$ and where iterated
integrals are regularized iterated integrals.
\end{prop}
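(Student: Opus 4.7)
The plan is to exhibit $L_i(z)$ as a formal iterated-integral expansion of $\Om$ along a path, reorganise the expansion in the chosen basis $B$, and then appeal to the regularization theory of \cite{BrownMZVPMS} to handle the divergences at the tangential base point $P_i$. First I would write the Picard-style formal solution of $\dd L = \Om L$: iterating the differential equation gives, formally along any path $\gamma$ from $P_i$ to $z$,
\[
L_i(z) = \sum_{n \geqs 0} \int_{\gamma} \Om \circ \cdots \circ \Om \quad (n \text{ copies}).
\]
Expanding $\Om = \sum_{ij} X_{ij} \otimes \om_{ij}$ and collecting, each monomial $W = X_{i_nj_n} \cdots X_{i_1j_1}$ in the letters $X_{12},X_{23},X_{34},X_{45},X_{24}$ appears with coefficient the (formal) iterated integral $\iti_{\gamma}\om_W$, so that, as an element of $\UB \otimes \C$,
\[
L_i(z) = \sum_{W \in \axb} \bigl(\iti_{\gamma}\om_{W}\bigr)\, W.
\]

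Next I would pass from the dictionary $\axb$ to the actual basis $B$ of $\UB$. Using Remark \ref{rembasis}, each word $W$ has a unique expansion $W = \sum_{b\in B} l_{b,W}\,b$, with only finitely many non-zero $l_{b,W}$ for each $W$, and for each $b$ only finitely many $W$ contribute. Substituting this into the formula above and swapping the two (locally finite) sums yields
\[
L_i(z) = \sum_{b \in B} \Biggl(\sum_{W \in \axb} l_{b,W}\,\iti_{\gamma}\om_{W}\Biggr) b
      = \sum_{b \in B} \Biggl(\iti_{\gamma}\sum_{W \in \axb} l_{b,W}\,\om_{W}\Biggr) b
      = \sum_{b \in B} \bigl(\iti_{\gamma}\, b^{*}\bigr)\, b,
\]
by the defining formula $b^{*} = \sum_{W} l_{b,W}\,\om_{W}$ given in \eqref{baseB4*} (valid for an arbitrary basis, not just $B_4$), and using the linearity of the iterated-integral map.

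The subtle point, and the main obstacle, is that the individual symbols $\om_W$ are generally not in $V(\m_{0,5})$, so the integrals $\iti_{\gamma}\om_W$ are not homotopy invariant and, at the tangential base point $P_i$, they are a priori divergent. The combinations $\sum_W l_{b,W}\om_W = b^{*}$ do lie in $V(\m_{0,5})$ by the duality construction preceding the proposition, and Brown's regularization procedure (\cite[Thm.\ 6.27 and \S 5.2]{BrownMZVPMS}), which assigns a finite, homotopy-invariant value to $\iti_{\gamma}\om$ for any $\om\in V(\m_{0,5})$ and any path $\gamma$ emanating from a tangential base point, is compatible with this linear reorganisation. Concretely, the unique normalization of $L_i$ at $P_i$ corresponds exactly to Brown's prescription for the regularized iterated integral along $\gamma$, which is what forces the equality of the two expansions. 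Once this compatibility is invoked, the proof reduces to the bookkeeping above, so the real content of the statement is the translation of \cite[Thm.\ 6.27]{BrownMZVPMS} into the basis-$B$ formulation.
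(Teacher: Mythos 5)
Your proposal is correct and follows essentially the same route as the paper, which offers no written proof at all: it simply asserts that the proposition is a rewriting of Theorem~6.27 of \cite{BrownMZVPMS} (together with Section~5.2 there), exactly the reduction you identify as the real content. Your added bookkeeping --- the Chen/Picard expansion $L_i(z)=\sum_{W}(\iti_{\gamma}\om_W)W$, the locally finite change of basis via $b^{*}=\sum_{W}l_{b,W}\,\om_{W}$, and the explicit acknowledgement that only the combinations $b^{*}\in V(\m_{0,5})$ (not the individual $\om_W$) admit homotopy-invariant regularized integrals --- is sound and in fact more detailed than what the paper provides.
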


The comparison of two different normalized solutions at two different
base points $P_i$ and $P_j$ is then given by 
\[
\forall z\in \wh{\m_{0,5}} \qquad L_i(z)=L_j(z)\left( \sum_{b\in B}
  \left(\iti_{\gamma} b^*\right)b\right) 
\] 
where $\gamma$  is any path going from $P_i$ to $P_j$ homotopically equivalent to
 a path $\gamma'$ going from $P_i$ to $P_j$ in the standard cell of $\m_{0,5}(\R)$.

Brown shows how to restrict any element $\om$ in $B(\m_{0,5})$ to any
boundary components $D$ introducing a regularization map $\on{Reg}(\om,D)$.
This map sends
each $\frac{\du_{ij}}{u_{ij}}$ to $0$ if the restriction of $u_{ij}$ to $D$
equals $0$ or $1$. More precisely,
\begin{defn}
Let $D_{ij}$ be a boundary component of $\m_{0,n}$ given by $u_{ij}=0$. We
define $\Reg(\frac{\du_{kl}}{u_{kl}},D_{ij})$ as follows:
\begin{itemize}
\item $\Reg(\frac{\du_{ij}}{u_{ij}},D_{ij})=0$,
\item $\Reg(\frac{\du_{kl}}{u_{kl}},D_{ij})=0$ if $u_{ij}u_{im}+u_{kl}=1$ as in
  \eqref{relcoord}, 
\item $\Reg(\frac{\du_{kl}}{u_{kl}},D_{ij})=\frac{\dd \tilde{u_{kl}}}{
    \tilde{u_{kl}}}$ where $\tilde{u_{kl}}$ is the restriction of $u_{kl}$ to
  $D_{ij}$ using the natural inclusion
\[
 D_{ij} \hookrightarrow \m_{0,5}^{\delta}.
\] 
\end{itemize}
\end{defn}
Now, using the inclusion $
\m_{0,4} \hookrightarrow \m_{0,4}^{\delta} \simeq D_{ij},
$ one can define  the map 
\[\Reg(-,D_{ij}) : V(\m_{0,5}) \lra V(\m_{0,4}).\]
 It sends an element 
\[
\om=\sum c_{i_1,j_1, \ldots, i_k,j_k} [\om_{i_1j_1}|\ldots|\om_{i_kj_k}] \in V(\m_{0,5})
\]
to
\[
\Reg(\om,D_{ij})=\sum c_{i_1,j_1, \ldots, i_k,j_k} 
[
\Reg(\om_{i_1j_1},D_{ij})|\ldots|\Reg(\om_{i_kj_k},D_{ij})] \in V(\m_{0,4}).
\]

\begin{exm}\label{exmrestrict}
As explained in Brown \cite[Lemma 2.6]{BrownMZVPMS}, the restriction of the
coordinate $u_{25}$ on $D_{35}$  can be computed in terms of the dihedral
coordinates on $D_{35} \simeq \m_{0,4}^{\delta}$ as follows. The chord $(3,5)$ splits the
pentagon
\[
\xymatrix@W=0ex@M=0ex@R=4ex@C=4ex{
 & & \bullet  \save[]+<1ex,1ex>*{3} \restore \ar@{--}[dddr]
\ar@{-}[drr]\ar@{-}[dll]& & \\
\bullet \ar@{-}[ddr]  \save[]+<-1ex,1ex>*{2} \restore& & & & 
\bullet  \save[]+<1ex,1ex>*{4} \restore \ar@{-}[ddl]\\
& & & & \\
& \bullet \ar@{-}[rr]  \save[]+<-1ex,-1ex>*{1} \restore& &
 \bullet \save[]+<1ex,-1ex>*{5} \restore
}
\]
into a square and a triangle
\[
\xymatrix@W=0ex@M=0ex@R=4ex@C=4ex{
\bullet  \save[]+<-1ex,1ex>*{\ol 2} \restore \ar@{-}[rr] \ar@{-}[dd] & & 
\bullet  \save[]+<1ex,1ex>*{\ol 3} \restore \ar@{--}[dd] \\
 & & \\
\bullet \ar@{-}[rr]  \save[]+<-1ex,-1ex>*{\ol 1} \restore& &
 \bullet \save[]+<1ex,-1ex>*{\ol 5} \restore
}
\qquad \qquad
\xymatrix@W=0ex@M=0ex@R=4ex@C=4ex{
\bullet  \save[]+<1ex,1ex>*{\ol 3} \restore \ar@{--}[dd] \ar@{-}[dr]\\
 & \bullet  \save[]+<1ex,1ex>*{\ol 4} \restore & \\
\bullet \ar@{-}[ur] \save[]+<1ex,-1ex>*{\ol 5} \restore & 
},
\]
where we have written $\ol i$ instead of $i$ to keep track of the difference
between the labeling on the pentagon (corresponding to $\m_{0,5}^{\delta}$) and the
square  (corresponding to $\m_{0,4}^{\delta}\simeq D_{35}$).
 
This decomposition corresponds to the isomorphism
\[
D_{35}\st{\sim}{\lra} \m_{0,4}^{\delta}\times \m_{0,3}^{\delta} \st{\sim}{\lra}
\m_{0,4}^{\delta}
\]
where, in $\m_{0,4} \subset \m_{0,4}^{\delta}$, the four marked points  are
labeled $z_{\ol 1}$, $z_{\ol 2}$, $z_{\ol 3}$ and $z_{\ol 5}$.

The coordinate $u_{25}$ is given by the cross-ratio
\[
u_{25}=[23|15].
\]
Its restriction to $D_{35}$ is the coordinate given by the chord
$(\ol 2, \ol 5)$ and thus by the cross-ratio
\[
\tilde{u_{25}}=[\ol 2 \ol 3|\ol 1 \ol 5].
\]

Following this description, there are two dihedral coordinates on $D_{35}\simeq
\m_{0,4}^{\delta}$ given by
\[
t_1=[\ol 2 \ol 3|\ol 1 \ol 5] \qquad \mx{and} \qquad
t_2=[\ol 1 \ol 2|\ol 3 \ol 5].
\]

Similarly, $u_{13}$, corresponding to the chords $(1,3)$ in the pentagon
description, restricts on $D_{35}$ to $t_2$ which corresponds to the chord 
$(\ol 1, \ol 3)$ on the square description of $D_{35}$. As $P_5$ is defined by
$u_{25}=u_{35}=0$, one sees that $t_1=\tilde{u_{25}}$ is $0$ at $P_5$ and
similarly that $t_2=\tilde{u_{13}}$ is $0$ at $P_3$. Moreover, on $D_{35}$
one has $t_2=1-t_1$, which agrees with the fact that on $\m_{0,5}$ one has
$u_{25}+u_{13}u_{14}=1$ and $u_{14}+u_{25}u_{35}=1$. Thus, the coordinate $t_1$
is equal to $1$ at $P_3$ and $t_2$ 
is equal to $1$ at $P_5$.
\end{exm} 
\begin{prop}\label{REGIII}
For any two consecutive tangential base points $P_i$ and $P_j$ with
$j\equiv i-2 \mod 5$, one has
\[
\forall z \in \wh{\m_{0,5}}  \qquad L_i(z)=L_j(z)\left( \sum_{b\in B}
  \left(\iti_{p_{ji}} \on{Reg}(b^*,D_{ji})\right)b\right) 
\]
where $p_{ji}$ is the real segment going in $D_{ji}$ from $P_i$ to
$P_j$. 
\end{prop}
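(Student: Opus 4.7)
The plan is to reduce the statement to the previous proposition and then apply Brown's regularization procedure for iterated integrals along boundary divisors. The previous proposition gives
\[
L_i(z) = L_j(z)\left(\sum_{b \in B}\bigl(\iti_\gamma b^*\bigr)\, b\right)
\]
for any path $\gamma$ from $P_i$ to $P_j$ in the standard cell of $\m_{0,5}(\R)$. Since $j \equiv i-2 \mod 5$, both tangential base points $P_i$ and $P_j$ lie on the common boundary divisor $D_{ji}$, so it is natural to choose $\gamma$ to approach $D_{ji}$ and travel along it.

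First I would construct a family $\gamma_\varepsilon$ of paths in the standard cell which, starting from the tangential direction at $P_i$, quickly approach $D_{ji}$, then travel parallel to and at distance $\varepsilon$ from $p_{ji}$ inside $\m_{0,5}$, and finally arrive at $P_j$ through the appropriate tangential direction. Each $\gamma_\varepsilon$ is homotopic (with fixed tangential endpoints) to $\gamma$, so the regularized iterated integrals $\iti_{\gamma_\varepsilon} b^*$ are all equal to $\iti_\gamma b^*$. Next I would compute $\lim_{\varepsilon \to 0} \iti_{\gamma_\varepsilon} b^*$ using Brown's analysis of bar-construction elements near boundary divisors: the form $\om_{ji} = \du_{ji}/u_{ji}$ becomes divergent when restricted to $D_{ji}$, whereas all other $\om_{kl}$ restrict either to $0$ or to the pullback $\widetilde{\om_{kl}}$ along the inclusion $D_{ji} \hookrightarrow \m_{0,5}^\delta$, in accordance with the definition of $\Reg(-,D_{ji})$. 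The integrability condition \eqref{intcond} defining $V(\m_{0,5})$ ensures that the divergent contributions from $\om_{ji}$ appearing in $b^*$ organize themselves into regularizable boundary terms which, after tangential normalization, sum to zero. What survives is exactly the iterated integral along $p_{ji}$ of the restricted/regularized word $\Reg(b^*, D_{ji}) \in V(\m_{0,4})$. Summing over $b \in B$ then yields the claimed formula.

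The main obstacle is the rigorous justification of the limit $\lim_{\varepsilon \to 0} \iti_{\gamma_\varepsilon} b^* = \iti_{p_{ji}} \Reg(b^*, D_{ji})$. This is not a routine computation: one must simultaneously control divergent logarithmic terms produced by $\om_{ji}$ against two tangential regularizations (at $P_i$ and $P_j$ as tangential base points) and verify that the integrability condition guarantees cancellation of all would-be divergences. Concretely, if one expands $b^* = \sum c_I \om_{W_I}$, the terms where $\om_{ji}$ appears contribute $\log\varepsilon$'s that must match across different $I$'s exactly because of \eqref{intcond}. This is essentially the content of Brown's Theorem 6.27 and the material of \cite{BrownMZVPMS}, Section 5.2, so the proof in its final form should reduce to invoking these results once the path $\gamma_\varepsilon$ is set up correctly; however, pinpointing the precise reference and checking that the sign conventions for $p_{ji}$ match those of the regularization map is the delicate technical point.
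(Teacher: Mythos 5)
Your overall strategy---start from the comparison $L_i(z)=L_j(z)\bigl(\sum_{b}(\iti_{\gamma}b^*)b\bigr)$ for a path $\gamma$ in the standard cell and then degenerate $\gamma$ onto the divisor $D_{ji}$---is a genuinely different route from the paper's. The paper does not take any limit of paths: it reduces by symmetry to $i=5$, $j=3$ and the basis $B_4$, invokes Brown's explicit formula (Definition 6.18 and the proof of Theorem 6.20 in \cite{BrownMZVPMS}) for the quotient $Z^{35}=L_3(z)^{-1}L_5(z)$ as a series supported on words in $X_{12},X_{23}$ with coefficients given by iterated integrals on $\m_{0,4}$, and then matches this term by term with $\sum_{b_4}\bigl(\iti_{p_{35}}\Reg(b_4^*,D_{35})\bigr)b_4$ by an explicit computation: the restrictions of the dihedral coordinates to $D_{35}$ give $\Reg(\om_{12},D_{35})=\dt/t$, $\Reg(\om_{23},D_{35})=\dt/(t-1)$ and $\Reg(\om_{ij},D_{35})=0$ otherwise, and the structure of the dual basis $B_4^*$ shows that exactly the elements $b_4\in\{X_{12},X_{23}\}^*$ survive. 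All the analysis is thus outsourced to an already-proved statement of Brown, and the remainder is combinatorics.

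The gap in your version is that the central identity $\iti_{\gamma}b^*=\iti_{p_{ji}}\Reg(b^*,D_{ji})$ is asserted rather than proved. Note that in the paper this identity appears only \emph{after} the proposition, as Corollary \ref{corohomo}, deduced from it---so you cannot borrow it without circularity. Moreover, the mechanism you offer is not the right one: you first (correctly) observe that the regularized integrals $\iti_{\gamma_\varepsilon}b^*$ are independent of $\varepsilon$ by homotopy invariance, so there are no $\log\varepsilon$ divergences left in the \emph{values} to be cancelled by the integrability condition \eqref{intcond}; the integrability condition is what makes the integrals homotopy invariant in the first place, while the divergences coming from $\om_{ji}=\du_{ji}/u_{ji}$ are removed by the tangential base point regularization, not by cancellation among the terms of $b^*$. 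The real content of the step is identifying the common value of these integrals with the boundary integral of the restricted symbol, and that requires either the explicit description of $Z^{ji}$ (the paper's route) or a careful degeneration argument with a precise statement from \cite{BrownMZVPMS}; pointing generically at Theorem 6.27 and Section 5.2 does not yet supply it. To close the gap you would need to either reproduce the paper's matching computation (restriction of the $u_{kl}$ to $D_{ji}$ plus the dichotomy on $b_4=U_{245}V_{123}$) or prove the limit formula directly for a single bar symbol $\om_W$ and extend by linearity.
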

\begin{proof}
The symmetry of the situation allows us to prove it only in the case where
$i=5$, $j=3$ and $B$ is the basis $B_4$. 

Let $p_{35}$ be the path in
$D_{35}$ going from $P_5$ to $P_3$; we need to show that 
\begin{equation}\label{compregonDij}
L_3(z)^{-1}L_5(z)= \sum_{b_4 \in B_4 }
  \left(\iti_{p_{35}}\Reg( b_4^*,D_{35}) \right)b_4.
\end{equation}

Brown, in \cite[Definition 6.18]{BrownMZVPMS}, defined $Z^{35}$ to be the
quotient $L_3(z)^{-1}L_5(z)$. Using the proof of Theorem
6.20 in \cite{BrownMZVPMS}, we have 
\[
Z^{35}=L_3(z)^{-1}L_5(z)= 
\sum_{
\substack{
W=X_{i_nj_n} \cdots X_{i_1j_1}\\
 \in \{X_{12}, X_{23}\}^* } }
  \left(\iti_{p} \frac{\dt}{t-\ve_n}\w \cdots \w\frac{\dt}{t-\ve_1}
\right)W  
\]
with $\ve_k=0$ if $i_k=1$ (and $j_k=2)$) and $\ve_k=1$ otherwise (that is,
$i_k=2$ and $j_k=3$). Using the morphism $p_4 : \UB \lra \UF$ that send $X_{i4}$
to $0$, $X_{12}$ to $X_0$ and $X_{23}$ to $X_1$, we have:
\[
Z^{35}=L_3(z)^{-1}L_5(z)= 
\sum_{
\substack{
W=X_{i_nj_n} \cdots X_{i_1j_1}\\
 \in \{X_{12}, X_{23}\}^* } }
  \left(\iti_{p} \om_{p_4(W)}
\right)W.
\]

We recall that an element $b_4$ of the basis $B_4$ is either $1$ or a monomial
of the form
\begin{equation}\label{baseB4}
b_4=U_{245}V_{123} \qquad U_{245}\in \{X_{24},X_{34},X_{45}\}^*, \quad
V_{123}\in\{X_{12},X_{23}\}^*.
\end{equation}

So, in order to prove \eqref{compregonDij}, it is enough to prove that:
\begin{itemize}
\item All the
iterated integrals $\iti_{p_{35}}\Reg( b_4^*,D_{35})$ for $b_4 = U_{245}V_{123}$
with $U_{245}$ not empty vanish:
\begin{equation*}
 \left.
\begin{array}{c} b_4 =
U_{245}V_{123} \\
\text{ with } U_{245}\in \{X_{24},X_{34},X_{45}\}^*, \quad U_{245}\neq
\emptyset
\end{array}\right\} \quad \Rightarrow \quad
\iti_{p_{35}}\Reg( b_4^*,D_{35})=0.
\end{equation*}
\item All the
iterated integrals $\iti_{p_{35}}\Reg( b_4^*,D_{35})$ for $b_4 = V_{123}$ are
equal to 
\[
\iti_{p}\om_{p_4(V_{123})}=\iti_{p}\om_{p_4(b_4)}.
\]
That is:
\[
b_4 =V_{123}\in \{X_{12},X_{23}\}^*
\quad \Rightarrow 
\quad 
\iti_{p_{35}}\Reg( b_4^*,D_{35})=\iti_{p}\om_{p_4(b_4)} .
\]
\end{itemize}

Let $t$ denote the dihedral coordinate $t_1$ on $D_{35}$ which takes
values $0$ at $P_5$ and $1$ at $P_3$ (see Example \ref{exmrestrict}). Example
\ref{exmrestrict} shows that 
\[
\tilde{u_{25}}=t,\qquad  \tilde{u_{13}}=1-t.
\] 
Moreover, as 
\[
u_{24}+u_{13}u_{35}=1 \qquad \mx{and} \qquad u_{14}+u_{25}u_{35}=1,
\]
one has $u_{24}=u_{14}=1$ on $D_{35}$. 

As the differential forms $\om_{23}$ and $\om_{34}$ are defined by
\begin{align*}\om_{23}=&\dd\log(u_{31}u_{41})=\dd \log(u_{13})+\dd \log(u_{14})
  \qquad \mx{and} \\
\om_{34}=&\dd\log(u_{24}u_{41})=\dd \log(u_{24})+\dd \log(u_{14}),
\end{align*}
and since one has
\[
\Reg(\dd\log(u_{35}),D_{35})=\Reg(\dd\log(u_{24}),D_{35})=\Reg(\dd\log(u_{14}),D_{35})=0,
\]
one concludes that 
\begin{align*}
\Reg(\om_{12},D_{35})=&\Reg(\dd\log(u_{25}),D_{35})=\frac{\dt}{t},\\ 
\Reg(\om_{23},D_{35})=&\Reg(\dd\log(u_{13}),D_{35})=\frac{\dt}{t-1}
\end{align*}

and $\Reg(\om_{ij},D_{35})=0$ otherwise.

It is now enough to show that for $b_4$ in $B_4$
\begin{itemize}
\item  $b_4$ is a word in the letters $X_{12}$ and $X_{23}$ (that is  $b_4 \in
  \{X_{12},X_{23}\}^*$) if and only if 
\begin{align*}
b_4^*&=\om_{b_4} \quad  \text{with } b_4\in  \{X_{12},X_{23}\}^* \\
 (&=[\om_{i_nj_n}|\cdots|\om_{i_1j_1}] \quad \text{with }
X_{i_kj_k} \in  \{X_{12},X_{23}\})
\end{align*}
\item $b_4$ contains some $X_{ij}$ with $i=4$ or $j=4$ if and only if
\begin{align*}
b_4^*&=\sum \lambda_{W'}\om_{W'} \quad \mx{with} \quad 
\lambda_{W'}\neq 0 \Rightarrow W' \notin  \{X_{12},X_{23}\}^* 
\end{align*}
that is, if and only if $b_4^*$ is a linear combination of bar symbols $\sum
\lambda_{W'}\om_{W'}$ ($\lambda_{W'}\neq 0$) with $W'$ containing 
at least one of the letters $X_{34},X_{45},X_{24}$. 
\end{itemize}
Using equations \eqref{baseB4}  and \eqref{baseB4*} that describe
respectively $b_4$ and $b_4^*$, one sees that Equation \eqref{compregonDij} (and thus the
proposition) follows directly from the relation defining $\UB$.
\end{proof}
From the previous proposition, we immediately deduce the following corollary.
\begin{coro}\label{corohomo}
For any path $\gamma$ in the standard cell homotopically equivalent to $p_{ji}$
$j\equiv i-2\mod 5$ ($1\leqs i,j \leqs 5$), we have  
\[
\forall \om \in V(\m_{0,5})\qquad \iti_{\gamma}\om=\iti_{p_{ji}}\Reg(\om,D_{ji}).
\]
\end{coro}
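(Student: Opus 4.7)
The plan is to match two distinct expressions for the comparison cocycle $L_j(z)^{-1} L_i(z)$. On the one hand, the general comparison formula stated just before Proposition \ref{REGIII} yields, for any path $\gamma$ in the standard cell of $\m_{0,5}(\R)$ going from $P_i$ to $P_j$,
\[
L_j(z)^{-1} L_i(z) = \sum_{b \in B} \bigl(\iti_{\gamma} b^*\bigr)\, b.
\]
On the other hand, Proposition \ref{REGIII} gives the same cocycle in regularized form along the boundary path $p_{ji}$:
\[
L_j(z)^{-1} L_i(z) = \sum_{b \in B} \bigl(\iti_{p_{ji}} \Reg(b^*, D_{ji})\bigr)\, b.
\]

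Since $B$ is a basis of $\UB$ in the sense of Definition \ref{defnbasis}, the decomposition of an element of $\UB$ along $B$ is unique (Remark \ref{rembasis}), so equating coefficients in the two series above forces
\[
\forall b \in B \qquad \iti_{\gamma} b^* = \iti_{p_{ji}} \Reg(b^*, D_{ji}).
\]
Because $(b^*)_{b\in B}$ is a basis of $V(\m_{0,5})$ dual to $B$, and because both maps $\om \mapsto \iti_{\gamma} \om$ and $\om \mapsto \iti_{p_{ji}} \Reg(\om, D_{ji})$ are linear in $\om$, extending by linearity to an arbitrary $\om \in V(\m_{0,5})$ yields the stated equality. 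There is no real obstacle: all of the content has already been established in Proposition \ref{REGIII}, and the corollary reduces to extracting coefficients in a basis and invoking linearity.
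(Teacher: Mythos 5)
Your proof is correct and takes essentially the same approach as the paper, which states the corollary as an immediate consequence of Proposition \ref{REGIII} without further argument. Your write-up simply makes the omitted verification explicit: equating the two expressions for the comparison cocycle $L_j(z)^{-1}L_i(z)$, extracting coefficients via the uniqueness of the decomposition in the basis $B$, and extending by linearity over the dual basis of $V(\m_{0,5})$.
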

Let $\gamma=p_{35} \circ p_{52} \circ p_{24} \circ p_{41} \circ
p_{13}$ denote the  composed path beginning and ending at $P_3$ and extending
the map $\Reg(\om, \gamma)$  to paths that are piecewise in some of the
divisor $D_{ij}$.
\begin{thm}\label{IIIMZVgeom}
The relation \eqref{III} is equivalent to the family of relations
\[
\forall b_4 \in B_4,\, b_4\neq 1 \qquad \iti_{\gamma}\Reg(b_{4}^*,\gamma)=0
\]
which is exactly the family \eqref{IIIMZV}.
\end{thm}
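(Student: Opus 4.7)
The plan is to mimic the strategy used for Theorems \ref{IMZVgeom} and \ref{IIMZVgeom}, transposed to $\m_{0,5}$ and the pentagon loop $\gamma$. First I would use the uniqueness of the normalized solution $L_3$ of $\dd L = \Om L$ at $P_3$: applying Proposition \ref{REGIII} five times along the successive pairs $(P_3,P_5)$, $(P_5,P_2)$, $(P_2,P_4)$, $(P_4,P_1)$, $(P_1,P_3)$, and comparing $L_3(z)$ with its analytic continuation along $\gamma$, one gets an identity of the form
\[
L_3(z)=L_3(z)\prod_{k=1}^{5}\left(\sum_{b_4\in B_4}\bigl(\iti_{p_{j_k i_k}}\Reg(b_4^*,D_{j_k i_k})\bigr)b_4\right).
\]
The five factors, by the explicit form of the normalized solutions at tangential base points and the identification $\UB\simeq k\llan X_{24},X_{34},X_{45}\rran\rtimes k\llan X_{12},X_{23}\rran$, must coincide with the five $\Phi$-factors of the pentagon relation \eqref{III}. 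Uniqueness of $L_3$ then shows that \eqref{III} is equivalent to the product being $1$ in $\UB$, i.e.\ to
\[
\forall b_4\in B_4,\ b_4\neq 1,\qquad \iti_{\gamma}\Reg(b_4^*,\gamma)=0,
\]
once one adopts the convention that $\Reg(\om,\gamma)$ denotes the piecewise regularization along the successive divisors crossed by $\gamma$.

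Next, to match this with the family \eqref{IIIMZV}, I would apply the iterated-integral composition formula \eqref{pathcompo} (generalized to a concatenation of five paths) to expand $\iti_{\gamma}\Reg(b_4^*,\gamma)$: for $b_4^*=\sum_{W\in\axb}l_{b_4,W}\om_W$, this produces
\[
\iti_{\gamma}\Reg(b_4^*,\gamma)=\sum_{W\in\axb}l_{b_4,W}\sum_{U_1\cdots U_5=W}\prod_{k=1}^{5}\iti_{p_{j_k i_k}}\Reg(\om_{U_k},D_{j_k i_k}).
\]
The combinatorial heart of the argument is then to compute each $\Reg(\om_{U_k},D_{j_k i_k})$: using the dihedral coordinate description of the boundary divisors (cf.\ Example \ref{exmrestrict}) together with the relations \eqref{relcoord}, I would show that the restriction of the $1$-forms $\om_{ij}$ to each $D_{j_k i_k}$ realizes exactly the projection $\pg$ of Definition \ref{pgdef}; concretely, on $D_{35}$ only $\om_{12}$ and $\om_{23}$ survive (giving $\pa$), and the cyclic symmetry of the pentagon together with the dihedral structure gives $\pb$, $\pc$, $\pd$, $\pe$ on $D_{52}$, $D_{24}$, $D_{41}$, $D_{13}$ respectively. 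Each segment $p_{j_k i_k}$, viewed inside the divisor $D_{j_k i_k}\simeq\m_{0,4}^{\delta}$, is identified with the straight path $p$ of Theorem \ref{IMZVgeom}, so
\[
\iti_{p_{j_k i_k}}\Reg(\om_{U_k},D_{j_k i_k})=(-1)^{\de(\pg_k(U_k))}\zsha(\pg_k(U_k)),
\]
where $\pg_k$ denotes the corresponding projection. Summing over $W$ and over decompositions $U_1\cdots U_5=W$ reproduces the formula of Theorem \ref{IIIMZVcom}, so the geometric family of vanishings is literally \eqref{IIIMZV}.

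I expect the main obstacle to lie in the bookkeeping of the second step: verifying precisely, divisor by divisor, that the regularization map $\Reg(-,D_{j_k i_k})$ transports the bar symbol $\om_{U_k}$ on $\m_{0,5}$ into the bar symbol $\om_{\pg_k(U_k)}$ on $\m_{0,4}$, with the right sign and the right pairing of indices. The dihedral symmetry of $\m_{0,5}$ reduces the analysis to a single divisor (as already done for $D_{35}$ in the proof of Proposition \ref{REGIII}), but one must still carefully track that the cyclic rotation exchanging the five triangulations $P_i$ corresponds exactly to the cyclic rotation of the projections $\pa,\pb,\pc,\pd,\pe$ on the side of $\UF$. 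Once this is in place, the equivalence between \eqref{III} and the family $\iti_{\gamma}\Reg(b_4^*,\gamma)=0$, as well as its coincidence with \eqref{IIIMZV}, follows directly from Proposition \ref{C5W} and Theorem \ref{IIIMZVcom}.
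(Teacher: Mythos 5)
Your proposal follows essentially the same route as the paper: comparison of the five normalized solutions $L_i$ via Proposition \ref{REGIII} to get $L_3(z)=L_3(z)Z^{35}Z^{52}Z^{24}Z^{41}Z^{13}$, identification of the five factors $Z^{ji}$ with the five $\Phi$-factors of \eqref{III} (which the paper delegates to Brown's Theorem 6.20), vanishing of the regularized iterated integrals along the homotopically trivial loop $\gamma$, and then the divisor-by-divisor computation showing $\Reg(\om_{kl},D_{j_ki_k})=\om_{\pg_k(X_{kl})}$ so that the path-composition formula reproduces the coefficients $C_{5,W}$ of Proposition \ref{C5W} (this is exactly the content of the paper's Proposition \ref{propitiC5W}). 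The correspondence you give between the divisors $D_{35},D_{52},D_{24},D_{41},D_{13}$ and the projections $\pa,\pb,\pc,\pd,\pe$ matches the paper's, so the argument is correct.
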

\begin{proof}For $i$ in $\{1,2,3,4,5\}$ and $j=i-2 \mod 5$, we define $Z^{ji}$
  by the formula
\[
Z^{ji}=\left( 
\sum_{b_4\in B_4}
  \left(\iti_{p_{ji}} \on{Reg}(b_4^*,D_{ji})\right)
b_4\right)\!.
\]
By Proposition \ref{REGIII}, one has 
\[
\forall z \in \wh{\m_{0,5}} \qquad L_i(z)=L_j(z)Z^{ji}.
\]

Comparison between the $5$ normalized solutions $L_i$ at the $5$ tangential 
base points $P_i$ gives  
\begin{equation}\label{5cycleZij}
\forall z \in \wh{\m_{0,5}}\qquad  
L_{3}(z)=L_{3}(z)Z^{35}Z^{52}Z^{24}Z^{41}Z^{13}.
\end{equation}

In the proof of Theorem 6.20
\cite{BrownMZVPMS} and the example which follows it, Brown proves that the
product of the $Z^{ji}$ is equal to the L.H.S (that is the product of the
$\pkz$) of \eqref{III}. So, 
Equation \eqref{III} can be written as
\[
Z^{35}Z^{52}Z^{24}Z^{41}Z^{13}=1.
\] 
It can also be proved directly using Proposition \ref{REGIII}. 

Equation \eqref{5cycleZij} is given by the analytic
continuation of the solution $L_3$ along any path in the standard cell beginning
and ending at $P_3$ and going through $P_1$, $P_4$, $P_2$ and
$P_5$ (in that order). Such a path is homotopically equivalent to $\gamma$ (and
to $0$) and
the product of the $Z^{ji}$ can be written as 
\[
Z^{35}Z^{52}Z^{24}Z^{41}Z^{13}=\sum_{b_4\in
  B_4}\left(\iti_{\gamma}b_4^*\right)b_4. 
\]

As $\gamma$ is  homotopically equivalent
to $0$, each of the homotopy invariant regularized iterated integrals
above are $0$ (except for $b_4=1$). Thus, the product 
\[
Z^{35}Z^{52}Z^{24}Z^{41}Z^{13}
\]
is equal to $1$. We deduce from the previous discussion that the family of
relations 
\[
\forall b_4 \in B_4,\, b_4\neq 1 \qquad \iti_{\gamma}\Reg(b_{4}^*,\gamma)=0
\]
implies relation \eqref{III}. Moreover, one deduces from the equation 
\[
Z^{35}Z^{52}Z^{24}Z^{41}Z^{13}=\sum_{b_4\in
  B_4}\left(\iti_{\gamma}b_4^*\right)b_4. 
\]
that relation \eqref{III} (which says that the product of the $Z^{ji}$
is $1$) implies 
\[
\forall b_4 \in B_4,\, b_4\neq 1 \qquad \iti_{\gamma}\Reg(b_{4}^*,\gamma)=0.
\]
The first part of the theorem is then proved.

Using the expression of $b_4^*$ in terms of $\om_{W}$, the end of the
theorem follows from Proposition \ref{propitiC5W} below.
\end{proof}
From the previous theorem, one deduces the following corollary.
\begin{coro}
For any basis $B$ of $\UB$, the pentagon relation
$\eqref{III}$ is equivalent to 
\[
\forall b \in B \qquad \iti_{\gamma}\Reg(b^*,\gamma)=0
\]
where $\gamma$, as previously, is the path $p_{35} \circ p_{52} \circ
p_{24} \circ p_{41} \circ p_{13}$.
\end{coro}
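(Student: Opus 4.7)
The plan is to repeat the argument of Theorem \ref{IIIMZVgeom} essentially verbatim, with the specific basis $B_4$ replaced by an arbitrary basis $B$ of $\UB$. The crucial observation is that Proposition \ref{REGIII} was already stated and proved for any basis $B$, so the expansion
\[
Z^{ji}=\sum_{b\in B}\left(\iti_{p_{ji}}\Reg(b^*,D_{ji})\right)b
\]
is available for each of the five consecutive pairs $(i,j)$ with $j\equiv i-2 \mod 5$.

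From here I would combine the five factors by the iterated-integral composition formula (extended to piecewise paths through adjacent boundary components, as in the paragraph preceding Theorem \ref{IIIMZVgeom}) to obtain
\[
Z^{35}Z^{52}Z^{24}Z^{41}Z^{13}=\sum_{b\in B}\left(\iti_{\gamma}\Reg(b^*,\gamma)\right)b.
\]
By Brown's Theorem 6.20 (already invoked inside the proof of Theorem \ref{IIIMZVgeom}), this product is exactly the left-hand side of \eqref{III}. Since $1 \in B$ by Definition \ref{defnbasis} and Remark \ref{rembasis}, and since the elements of $B$ are linearly independent, the equality of the product with $1$ in $\UB$ is equivalent to the constant coefficient being $1$ and all other coefficients vanishing, i.e.\ $\iti_\gamma \Reg(b^*,\gamma)=0$ for every $b \in B$ with $b\neq 1$.

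There is no real obstacle, since all the analytic content — regularization on boundary components, the composition of iterated integrals along piecewise paths, and the identification of $V(\m_{0,5})$ with the graded dual of $\UB$ — was handled in the preceding results and depends only on the duality between $\UB$ and $V(\m_{0,5})$, not on any specific choice of basis. An alternative and even shorter route is to observe that both expansions $\sum_{b\in B}(\iti_\gamma \Reg(b^*,\gamma))\,b$ and $\sum_{b_4\in B_4}(\iti_\gamma \Reg(b_4^*,\gamma))\,b_4$ coincide with the same product $Z^{35}Z^{52}Z^{24}Z^{41}Z^{13}$; linear independence within each (finite-dimensional) graded piece of $\UB$ then forces the family of vanishing statements indexed by $B$ to be equivalent to the one indexed by $B_4$, which is equivalent to \eqref{III} by Theorem \ref{IIIMZVgeom}.
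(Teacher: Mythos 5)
Your argument is correct and is essentially the paper's own (the paper states this corollary with no written proof beyond ``from the previous theorem, one deduces''): since Proposition \ref{REGIII} is stated for an arbitrary basis $B$, the proof of Theorem \ref{IIIMZVgeom} goes through verbatim, and your alternative route via the basis-independence of the expansion $\sum_{b}\bigl(\iti_{\gamma}\Reg(b^*,\gamma)\bigr)\,b$ of the product $Z^{35}Z^{52}Z^{24}Z^{41}Z^{13}$ is an equally valid phrasing of the same deduction. The only point worth flagging is that, exactly as in Theorem \ref{IIIMZVgeom}, one must exclude $b=1$ (whose coefficient is $1$, not $0$); you handle this correctly in your argument even though the corollary as stated omits the condition.
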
 
Following the proof of \ref{REGIII}, one proves Proposition \ref{propitiC5W},
which completes the proof of Theorem \ref{IIIMZVgeom}. 
\begin{prop}\label{propitiC5W}
For any bar symbol $\om_W$ dual to a word $W$ in the letters $X_{34}$,
$X_{45}$, $X_{24}$, $X_{12}$, $X_{23}$,
we have
\[
C_{5,W,KZ}=\iti_{\gamma} \Reg(\om_{W},\gamma)
\]
where $C_{5,W,KZ}$ is the coefficient $C_{5,W}$ defined in \eqref{C5Wdef} in the
particular case of the Drinfel'd associator $\pkz$.
\end{prop}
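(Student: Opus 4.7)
The plan is to parallel the argument used to establish Proposition \ref{REGIII} (which handled the comparison along a single divisor $D_{35}$), but now apply it to each of the five successive pieces of $\gamma = p_{35}\circ p_{52}\circ p_{24}\circ p_{41}\circ p_{13}$. First, I would extend the path-composition formula \eqref{pathcompo} from two factors to five: for any $\om_W$ with $W\in \axb$,
\[
\iti_{\gamma}\Reg(\om_W,\gamma) =
\sum_{\substack{U_1,\ldots,U_5\in \axb\\ U_1\cdots U_5=W}}
\prod_{k=1}^{5} \iti_{p_{j_ki_k}} \Reg(\om_{U_k}, D_{j_ki_k})
\]
where $(j_ki_k)$ are the five index pairs $(35),(52),(24),(41),(13)$ in order. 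This is immediate from writing $\gamma$ as a concatenation and using the definition of $\Reg(\om,\gamma)$ piecewise on each $D_{j_ki_k}$.

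Next, for each of the five divisors $D_{ji}$, I would identify explicitly, as in Example \ref{exmrestrict}, which of the five one-forms $\om_{12},\om_{23},\om_{34},\om_{45},\om_{24}$ restricts to $\dt/t$, which to $\dt/(t-1)$, and which to $0$; here $t$ is the dihedral coordinate on $D_{ji}\simeq \m_{0,4}^{\delta}$ taking the values $0$ at $P_i$ and $1$ at $P_j$. The proof of Proposition \ref{REGIII} already did this for $D_{35}$, giving $\Reg(\om_{12},D_{35})=\dt/t$, $\Reg(\om_{23},D_{35})=\dt/(t-1)$, and the three others zero, which matched precisely the morphism $\pa$ in Definition \ref{pgdef}. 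I would then argue (by the dihedral symmetry of the coordinates $u_{ij}$ and the relations \eqref{relcoord}, which are cyclic under $i\mapsto i+1,\ j\mapsto j+1$) that the analogous computation on $D_{52}$, $D_{24}$, $D_{41}$, $D_{13}$ yields exactly the maps $\pb,\pc,\pd,\pe$ respectively, after identifying $(\om_0,\om_1)$ on $D_{ji}$ with the appropriate pair of restricted forms.

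Given this, each factor reduces via the map $\Reg(-,D_{j_ki_k}) : V(\m_{0,5}) \ra V(\m_{0,4})$ to an iterated integral along (a path homotopic to) $p$ in $\m_{0,4}$:
\[
\iti_{p_{j_ki_k}} \Reg(\om_{U_k},D_{j_ki_k}) \;=\; \iti_{p}\, \om_{\pg_k(U_k)}
\;=\; (-1)^{\de(\pg_k(U_k))}\zsha(\pg_k(U_k)) \;=\; Z_{\pg_k(U_k),KZ},
\]
where the middle equality uses Theorem \ref{IMZVgeom} (or rather the identification of $\iti_p \om_W$ with the coefficient of $W$ in $\pkz$), and $\pg_k$ runs over $\pa,\pb,\pc,\pd,\pe$. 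Substituting into the sum over $5$-fold decompositions gives exactly the expression for $C_{5,W,KZ}$ from Proposition \ref{C5W} applied to the Drinfel'd associator, with the convention $Z_{0,KZ}=0$ and $Z_{\emptyset,KZ}=1$ accounting for the factors where $\pg_k(U_k)=0$ or $U_k=\emptyset$.

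The main obstacle is the bookkeeping of the second step: verifying case by case that the restrictions of the $\om_{ij}$ to each of the four remaining divisors $D_{52},D_{24},D_{41},D_{13}$ yield precisely the pairs $(X_0,X_1)\mapsto(\om_?,\om_?)$ prescribed by $\pb,\pc,\pd,\pe$. Although this follows conceptually from the dihedral symmetry of $\m_{0,5}^{\delta}$ (and from the fact that the tangential base points $P_i$ are interchanged cyclically), establishing it rigorously requires expressing each coordinate $u_{kl}$ in dihedral coordinates on the relevant $D_{ji}$, exactly as Brown does in Lemma 2.6 of \cite{BrownMZVPMS} and as illustrated in Example \ref{exmrestrict}. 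Once these five restriction tables are in hand, the rest is purely formal.
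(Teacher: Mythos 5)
Your proposal follows essentially the same route as the paper: the paper's (very terse) proof likewise decomposes $\gamma$ into the five pieces $p_{35},p_{52},p_{24},p_{41},p_{13}$, reduces to showing $\iti_{I_i}\Reg(\om_U,I_i)=(-1)^{\degg(U)}\zsha(\pg(U))$ for each piece, and rests on the identity $\Reg(\om_{kl},I_i)=\om_{\pg(X_{kl})}$, which is exactly your restriction-table step. Your write-up is more explicit about the bookkeeping, but the argument is the same.
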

\begin{proof}
To show the proposition, it is enough, using the decomposition of
$\gamma=p_{35} \circ p_{52} \circ p_{24} \circ p_{41} \circ
p_{13}$, to show that for any $U$ in
$\AXij$ and any $i$,
one has 
\[
(-1)^{\degg(U)}\zsha(\pg(U))=\iti_{I_i}\Reg(\om_{U},I_i)
\] 
where $I_5=p_{13}$, $I_4=p_{41}$,
$I_3=p_{24}$, $I_2=p_{52}$ and $I_1=p_{35}$. 

As
$\Reg(\om_{kl}, I_i)= \om_{\pg(X_{kl})}$, the proposition follows.
\end{proof}
\section{Appendix : relations in low degrees}
\subsection{Remarks}
From the following tables, one can see that coefficients of words in $X_{12}$ and 
$X_{23}$ yield the family of relations \eqref{rel1coefeq} (which is equivalent to
\eqref{rel1}). This can be proved 
directly from \eqref{rel3coefeqB4} (which is equivalent to \eqref{rel3}). In order
to do so, one 
observes that if $b_4$ in the 
basis $B_4$ is a word in $X_{12}$ and $X_{23}$, then $l_{b_4,W}\neq 0$ if and
only if $W=b_4$. In the case of the Drinfel'd associator $\pkz$, only the term
\begin{multline*}
\sum_{
U_1\cdots U_5=b_4}
(-1)^{\dea(U_1)+\deb(U_2) +\dec(U_3)+ \ded(U_4)+ \dee(U_5)} \\
\zsha(\pa(U_1))
\zsha(\pb(U_2))
\zsha(\pc(U_3)
\zsha(\pd(U_4))
\zsha(\pe(U_5))
\end{multline*}
is non zero, and the $U_i$ are words in $X_{12}$ and
$X_{23}$.  
Then, the fact that $\pb(X_{12})=\pb(X_{23})=0$ tells us that
$\pb(U_2)=0$ if $U_2 \neq  \emptyset$. As $\pd(X_{12})=\pe(X_{12})=0$,
$\pd(X_{23})=X_0$ and $\pe(X_{23})=X_1$, we deduce
that $\pd(U_4)$ is $0$ or a power of $X_0$ and $\pe(U_5)$ is $0$ or a power of
$X_1$ (again with $U_4,U_5 \neq \emptyset$). We conclude using the fact that
for $k\geqs 1$, 
\[
\zsha(0)=\zsha(X_0^k)=\zsha(X_1^k)=0.
\]

Using the explicit relations between the coefficients of the associator
\eqref{rel3coefeqB4}, the 
above arguments show the well known  implication
``\eqref{rel3} implies \eqref{rel1}'' proved by Furusho in \cite{FurushoMZVSDA}. 

In \cite{FurushopentaH}, Furusho also proved that \eqref{rel3} implies
\eqref{rel2}. This implication does not appear clearly looking at the
coefficients and
comparing \eqref{rel3coefeqB4} and \eqref{rel2coefeq}. In the case of $\pkz$,
the first reason is that no $\pi$ 
can arise from \eqref{IIIMZV}. In order to see ``\eqref{rel3} implies
\eqref{rel2}'' on the coefficients, one should first replace $(2\pi i)^2$ by
$-24\zsha(X_0X_1)$ 
in \eqref{IIMZV}. The second reason is that the proof of Furusho suggests that
the linear combinations involved are much more complicated than the ones involved
for \eqref{rel3} implies \eqref{rel1} (which is deduced from \eqref{rel3} by sending
$X_{i,4}$ to $0$).

Another set of well-known relations between multiple zeta values are the double
shuffle relations. As the representation of the multiple zeta values with
iterated integrals leads to the quadratic relations
\[
 \zsha(V)\zsha(W)=\sum_{U\in \on{sh}(V,W)}\zsha(U),
\] 
writing the multiple zeta values as series $\zeta(\mb k)=\sum
\frac{1}{n_1^{k_1}\cdots n_p^{k_p}}$ leads to another 
regularization $\zeta^*$ and another set of
quadratic relations (\cite{DMRRac})
 \[
 \zeta^*(\mb k)\zeta^*(\mb l)=\sum_{\mb m \in \on{st}(\mb k,\mb l)}\zeta^*(\mb m)
\] 
where $\on{st}(\mb k,\mb l)$ is a family of tuples of integers defined from $\mb k$ and $\mb
l$ by a combinatorial process. The two regularizations are linked by an
explicit formula, and the set of relations induced by the two set of quadratic
relations is known as \emph{double shuffle relations} (see for example
\cite{DMRRac}).

More recently, in \cite{DSPfuru0}, Furusho proved that \eqref{rel3} implies the
double shuffle relations. Seeing this fact directly on the coefficients is not easy
because one has to find the ``right linear combination''. Although one can give the first
example in weight $3$ (see below), already in degree $4$ one has to look at $211$
relations ... Even looking only at the relations coming from multiplicative
generators of $V(\m_{0,5})$ is difficult. 
 However, Theorem \ref{rel3coef} tells us that no information is lost between 
 relation \eqref{rel3} and the family of relations given by \eqref{rel3coefeqB4}.
Thus, using Furusho's theorem, this family of relations implies the double
shuffle relations.  

Using a more suitable basis to write the relations, one that would give ``nice''
multiplicative generators for $V(\m_{0,5})$, or one coming from a ``simple'' basis
of $V(\m_{0,5})$, may help to progress in the direction of the not known
implication
\begin{center}
``Double shuffle'' implies \eqref{rel3}.
\end{center}
However, this is not certain. A global approach (interpreting the series shuffle
relations as a group-like property as in \cite{DMRRac} or in
\cite{DSPfuru0}) or a geometric approach could be better. 

\begin{exm}In weight $2$, double shuffle relations do not give extra relations
  between multiple zeta values. They tell us the values of the second
  regularization of $\zeta^*(1,1)$: $\zeta^*(1,1)=\zeta(2)/2$, which is different
  from the \emph{shuffle} regularization $\zsha(1,1)=\zsha(Y,Y)=0$.

In weight $3$, the double shuffle relations lead to $\zeta(3,1)=\zeta(2)$, 
which can be written as 
\[
\zsha(X_0X_0X_1)=\zsha(X_0X_1X_1).
\]
This equality is a direct consequence of the duality relation; however, to
recover it from Table \ref{tabledeg3}, one needs to use
$3$ relations.
Indeed, using the coefficients of monomials $X_{45}X_{24}X_{24}$, $X_{24}X_{45}X_{45}$,
$X_{34}X_{45}X_{45}$, one finds
\[
\zsha(X_0X_1X_1)=\zsha(X_1X_0X_0)=\zsha(X_1X_1X_0)=\zsha(X_0X_0X_1).
\] 
\end{exm}

\subsection{Degree $1$, $2$ and $3$}
Here one can find the explicit relations given by the pentagon equation
\eqref{III} in low degree. Writing the product 
\begin{multline*}
\pkz(X_{12},X_{23})
\pkz(X_{34},X_{45})
\pkz(X_{51},X_{12})
\pkz(X_{23},X_{34})\\
\pkz(X_{45},X_{51})=\sum_{b_4} C_{b_4} b_4
\end{multline*} 
in the basis $B_4$, the following tables give the relation
$C_{b_4}=0$ in terms of regularized multiple zeta values.

Let $B_4^{\deg=i}$ denote the family of elements in $B_4$ with degree equal to
$i$. For any $S \subset B_4$, one defines $S^*$ to be the set $\{b^*\, |\, b \in
S\}$. Let $N$ be an integer, $N\geqs 1$. A sequence $\{S_1, \ldots, S_N\}$ with
$S_i \subset B_4^{\deg=i}$ is called 
\emph{a set of multiplicative generators up to degree $N$} if for every $i=1, \ldots, N$
and every element $\om$ of degree $i$ in $V(\m_{0,5})$, $\om$ is a linear
combination of shuffles of elements in $\{1\}\cup S_1^*\cup \cdots S_i^*$. Let $\gamma'$
be a path in the standard cell homotopically equivalent to $\gamma=p_{35} \circ p_{52} \circ
p_{24} \circ p_{41} \circ p_{13}$, and let $f_1$ and $f_2$ be two elements in
$V(\m_{0,5})$. Then it is a property of iterated integrals (\cite{IIDFLSChen})
that
\[
\left(\int_{\gamma'}f_1\right)\left(\int_{\gamma'}f_2\right)
=\left(\int_{\gamma'}f_1\sha f_2\right).
\]
Now, using Corollary \ref{corohomo}, one deduces that
\[
\left(\int_{\gamma}\Reg(f_1,\gamma)\right)\left(\int_{\gamma}\Reg(f_2,\gamma)\right)
=\left(\int_{\gamma}\Reg(f_1\sha f_2,\gamma)\right).
\]
In particular the family of relations 
\[
\forall b_4 \in B_4,\, b_4\neq 1 \qquad \iti_{\gamma}\Reg(b_{4}^*,\gamma)=0
\]
up to degree $N$ is induced by 
\[
\forall i=1, \ldots N, \quad \forall s \in S_i, 
\qquad \iti_{\gamma}\Reg(s^*,\gamma)=0
\]
for any set of multiplicative generators $\{S_1, \ldots, S_N\}$ up to degree $N$. More
precisely, let an element $b_4$ in $B_4$ be of degree less than or equal to $N$. The
corresponding relation between 
multiple zeta values given at Equation \eqref{IIIMZV} is exactly (Cf. Theorem
\ref{IIIMZVgeom}) 
\[
\iti_{\gamma}\Reg(b_{4}^*,\gamma)=0.
\]
Now, we write $b_4^*$ in terms of multiplicative generators 
\[
b_4^*=\sum_{k=1}^M\lambda_k s_{i_k}^* \sha s_{j_k}^*
\]
with $s_{i_k}^*$, $s_{j_k}^*$ in $\{1\}\cup S_1^*\cup \cdots S_N^*$.
Using the previous discussion, one has
\begin{multline*}
\iti_{\gamma}\Reg(b_{4}^*,\gamma)=
\sum_{k=1}^M\lambda_k \iti_{\gamma}
\Reg(s_{i_k}^* \sha s_{j_k}^*,\gamma) \\
=\sum_{k=1}^M\lambda_k 
\left(\iti_{\gamma}\Reg(s_{i_k}^*,\gamma)\right)
\left( \iti_{\gamma}\Reg( s_{j_k}^*,\gamma)\right).
\end{multline*}
Thus, the relation corresponding to $b_4$ is a consequence of the shuffle
relations for the MZV and of the relations corresponding to the $s_{i_k}$ and 
the $s_{j_k}$. 

In degree $1$ the basis is given by the letters $X_{34}$,  $X_{45}$,
$X_{24}$,  $X_{12}$ and  $X_{23}$. The corresponding relations (equivalent to
\eqref{III}) are given in Table \ref{tabledeg1}.

\newcommand{\sui}{=0 \\} 
\newcommand{\zetasha}{\zsha}
\newcommand{\tetecoloneun}{Monomials}
\newcommand{\tetecolonedeux}{Relations}
\newcommand{\findecoeff}{}
In degree $2$ the basis $B_4$ is given by $19$ monomials, but we have only $4$
multiplicative generators and the
corresponding relations are given in Table \ref{tabledeg2}. In degree
$3$ there are $10$ multiplicative generators and the corresponding relations
are given in Table \ref{tabledeg3}.

The code used to produce the relations is given (and commented) in the next
section.

\begin{exm}\label{exma} The monomial $b=X_{23}X_{12}$ is an element of the basis $B_4$ but is
  not part of the chosen weight $2$ multiplicative generators of Table
  \ref{tabledeg2}. Its dual 
  element in $V(\m_{0,5})$ is given by 
\[
b^*=[\om_{23}|\om_{12}]=[\om_{23}]\sha[\om_{12}]-[\om_{12}|\om_{23}].
\]

As previously, let $\gamma$ denote the path $p_{35} \circ p_{52} \circ
p_{24} \circ p_{41} \circ p_{13}$. Computing the iterated integral 
$\int_{\gamma} \Reg([\om_{23}|\om_{12}], \gamma)$, one finds
\begin{equation}
0=\int_{\gamma}\Reg( [\om_{23}|\om_{12}],\gamma)=-\zsha(X_1X_0)-\zsha(X_0X_1)+\zsha(X_1)^2. \label{exma0}
\end{equation}
In the other hand, the relations given by the iterated integrals of $[\om_{23}]$,
$[\om_{12}]$ and $[\om_{12}|\om_{23}]$ are (see Tables \ref{tabledeg1} and
\ref{tabledeg2}) 
\begin{equation}
0=\int_{\gamma} \Reg( [\om_{23}], \gamma)=2\left(\zsha(X_0)-\zsha(X_1)\right), \label{exma1} 
\end{equation} 
\begin{equation}
0=\int_{\gamma}\Reg( [\om_{23}],\gamma)=\zsha(X_0)-\zsha(X_1) \label{exma2}
\end{equation}
and
\begin{multline}
0=\int_{\gamma}\Reg( [\om_{12}|\om_{23}],\gamma)=2\zsha(X_0)^2-2 \zsha(X_1)
\zsha(X_0)\\+\zsha(X_1)^2- \zsha(X_0X_1)- \zsha(X_1X_0). \label{exma3}
\end{multline} 
Multiplying Equations \eqref{exma1} and \eqref{exma2} and subtracting Equation
\eqref{exma3}, one finds
\[
0=-2\zsha(X_0)\zsha(X_1)+\zsha(X_1)^2+\zsha(X_0X_1)+\zsha(X_1X_0).
\]
Using the shuffle relation on the product $\zsha(X_0)\zsha(X_1)$, one gets
\[
-\zsha(X_1X_0)-\zsha(X_0X_1)+\zsha(X_1)^2=0,
\]
which is exactly the relation given by the iterated integral $\int_{\gamma}
\Reg([\om_{23}|\om_{12}],\gamma)$ at Equation \eqref{exma0}. Here, we used the
shuffle relation on the term 
\[
-2\zsha(X_0)\zsha(X_1)
\] because this term  
corresponds to the following integrals
\[
 \int_{p_{35}}\Reg([\om_{23}] \sha [\om_{12}],p_{35})=
\left(\int_{p_{35}}\Reg([\om_{23}],p_{35})\right)
\left( \int_{p_{35}}\Reg([\om_{12}],p_{35})\right) 
\]
and 
\[
\int_{p_{24}}\Reg([\om_{23}] \sha [\om_{12}],p_{24})= 
\left(\int_{p_{24}}\Reg([\om_{23}],p_{24})\right)
\left( \int_{p_{24}}\Reg([\om_{12}],p_{24})\right).
\] 
\end{exm}

\begin{exm}
In weight $3$, let us consider the monomial $b=X_{24}X_{23}X_{12}$, which is an
element of the basis $B_4$, without being one of the chosen multiplicative
generators of Table \ref{tabledeg3}. Its dual element $b^*$ is given by
\[
b^*=[\om_{24}|\om_{23}|\om_{12}]+[\om_{23}|\om_{24}|\om_{12}]
+[\om_{23}|\om_{12}|\om_{24}]=[\om_{24}]\sha [\om_{23}|\om_{12}].
\]
The element $[\om_{23}|\om_{12}]$ in $V(\m_{0,5})$ is dual to the monomial
$X_{23}X_{12}$ which is not an element of the chosen weight $2$ multiplicative
generators (see Table \ref{tabledeg2}). However, we explained in the previous
example (Example \ref{exma}) how to derive the relation corresponding to
$X_{23}X_{12}$ from the relations corresponding to  $ X_{23}X_{12}$, $X_{23}$
and $X_{12}$.

As previously, let $\gamma$ denotes the path $p_{35} \circ p_{52} \circ
p_{24} \circ p_{41} \circ p_{13}$. The complete relation given by the iterated
integral $\int_{\gamma}\Reg(b^*,\gamma)$ is   
\begin{multline}\label{exam2}
-\zsha(X_1)^3 + 2 \zsha(X_1) \zsha(X_0X_ 1) - \zsha(X_0) \zsha(X_1X_ 0) \\
+ 2 \zsha(X_1) \zsha(X_1X_0) - 2 \zsha(X_0X_ 0X_1) - \zsha(X_0X_1X_0)=0.
\end{multline}
The relations given by the iterated integral of $[\om_{24}] $ and
$[\om_{23}|\om_{12}]$ are  respectively
\begin{equation} \label{exmb1}
\zsha(X_0)-\zsha(X_1)=0
\end{equation}
and 
\begin{equation}\label{exmb2}
-\zsha(X_1X_0)-\zsha(X_0X_1)+\zsha(X_1)^2=0.
\end{equation}
Multiplying those two equations one finds
\begin{multline}
-\zsha(X_1X_0)\zsha(X_0)-\zsha(X_0X_1)\zsha(X_0)+\zsha(X_1)^2\zsha(X_0) +\\
\zsha(X_1X_0)\zsha(X_1)+\zsha(X_0X_1)\zsha(X_1)-\zsha(X_1)^3=0.
\end{multline}
Now, using shuffle relations 
\[
-\zsha(X_0X_1)\zsha(X_0)=- 2 \zsha(X_0X_ 0X_1) - \zsha(X_0X_1X_0)
\]
and 
\[
\zsha(X_1)^2\zsha(X_0)=\zsha(X_1X_0)\zsha(X_1)+\zsha(X_0X_1)\zsha(X_1),
\]
one recovers the relation corresponding to $X_{24}X_{23}X_{12}$ given in
Equation \eqref{exam2}. As in the previous example, using the shuffle relations
between multiple zeta values for some products corresponds to the shuffle relation
between some products of iterated integrals.

One should also remark that it is possible to recover directly from Table
\ref{tabledeg3} the relation 
\[
- 2 \zsha(X_0X_ 0X_1) - \zsha(X_0X_1X_0)=0
\]
which is equivalent to Equation \eqref{exam2} as $\zsha(X_0)=\zsha(X_1)=0$. In
order to do so, one uses the relations given by the monomials $X_{34}X_{34}X_{45}$ and
$X_{24}X_{34}X_{45}$. 
\end{exm}
\begin{table}[h]
\begin{tabularx}{0.87\textwidth}{>{$\ds}c<{$}c>{$\ds}c<{$}c>{$\ds}l<{=0$}c}
\toprule
\multicolumn{1}{l}{\tetecoloneun}
&\hphantom{test}&\multicolumn{1}{p{2.5cm}}{Dual elements in $V(\m_{0,5})$}&
\hphantom{test}&\multicolumn{1}{l}{\tetecolonedeux } & \\
\toprule
 X_{1 2} &  &[\om_{1 2}] & &
\zetasha\left(X_0\right)-\zetasha\left(X_1\right) &
\findecoeff  \\ \midrule
 X_{2 3} &  &[\om_{2 3}]  & &
 2\left(\zetasha\left(X_0\right)-\zetasha\left(X_1\right)\right) &
\findecoeff  \\ \midrule
 X_{2 4} &  &[\om_{2 4}]  & &
\zetasha\left(X_0\right)-\zetasha\left(X_1\right) &
\findecoeff  \\ \midrule
 X_{3 4} &  & [\om_{3 4}]  & & 2
\left(\zetasha\left(X_0\right)-\zetasha\left(X_1\right)\right) &
\findecoeff  \\ \midrule
 X_{4 5} &  &[\om_{4 5}]  & &
\zetasha\left(X_0\right)-\zetasha\left(X_1\right) &
\findecoeff\\ 
\bottomrule \\
\end{tabularx}

\caption{Explicit set of relations equivalent to \eqref{III} in degree
  $1$} 
\label{tabledeg1}
\end{table}
\begin{table}[h]
\begin{tabularx}{\linewidth}{>{$\ds}c<{$}c>{$\ds}l<{$}c>{$\ds}l<{$}c}
\toprule
\multicolumn{1}{l}{\tetecoloneun}
& &\multicolumn{1}{l}{Dual elements in $V(\m_{0,5})$}&
&\multicolumn{1}{l}{\tetecolonedeux } & \\
b_4 \in B_4 & &b_4^*=\sum l_{b_4,W}\om_W & & & \\
\toprule
\toprule
 X_{2 4} X_{4 5} &  &-[\om_{1  2} |\om_{2  4}] + 
 [\om_{ 2  4}|\om_{4  5}] &  &
\zetasha\left(X_0\right)
\zetasha\left(X_1\right)-\zetasha\left(X_1\right){}^2=0 &
\findecoeff  \\ \midrule
X_{2 4} X_{3 4} &  & -[\om_{1  2}|\om_{2  4}] + 
 [\om_{ 2  3}|\om_{ 2  4}] 
&  &
-\zetasha\left(X_0\right){}^2+\zetasha\left(X_1\right)
\zetasha\left(X_0\right)-& \findecoeff  \\ 
&  &\hphantom{te}~ - [\om_{2  3}|\om_{3  4}] + 
[\om_{ 2  4}|\om_{3  4}]&  &\multicolumn{1}{r}{$\ds 2
\zetasha\left(X_1\right){}^2+ \zetasha\left(X_0X_0\right)+ 
$}\\
&  & &  &\multicolumn{1}{r}{$\ds \zetasha\left(X_0X_1\right)+ 
\zetasha\left(X_1X_0\right)+ 
$}\\ 
&  & &  &\multicolumn{1}{r}{$\ds \zetasha\left(X_1X_1\right)=0 $}\\
 \midrule
%
X_{3 4} X_{4 5} &  &  [\om_{ 3  4}|\om_{4  5}]&  & 
2\zetasha\left(X_0\right){}^2-\zetasha\left(X_1\right)
\zetasha\left(X_0\right)-& \findecoeff  \\ 
&  & &  &\multicolumn{1}{r}{$\ds \zetasha\left(X_0X_1\right)-\zetasha
\left(X_1X_0\right) =0 $}\\ \midrule
 X_{1 2} X_{2 3} &  &  [\om_{ 1  2}|\om_{2  3}]&  & 
2\zetasha\left(X_0\right){}^2-2 \zetasha\left(X_1\right)
\zetasha\left(X_0\right)+
&
\findecoeff  \\ 
&  & &  &\multicolumn{1}{r}{$
\ds \zetasha\left(X_1\right){}^2- \zetasha\left(X_0X_1\right)-$}\\
&  & &  &\multicolumn{1}{r}{$
\ds \zetasha\left(X_1X_0\right)  =0$}\\ \midrule
\bottomrule\\
\end{tabularx}

\caption{Explicit set of relations equivalent to \eqref{III} in degree
  $2$} 
\label{tabledeg2}
\end{table}
\begin{table}[ht]
\begin{tabularx}{\linewidth}{>{$\ds}c<{$}c>{$\ds}l<{=0$}c}
\toprule
\multicolumn{1}{l}{\tetecoloneun}
&\hphantom{test}&\multicolumn{1}{l}{\tetecolonedeux } & \\
\toprule
 {X_{34} X_{24} X_{24}} &  &{
 - \zetasha\left(X_0 X_0 X_1\right)- \zetasha\left(X_0 X_1 X_0 
\right)- \zetasha\left(X_1 X_0 X_0\right)} & \findecoeff  \\ \midrule
 {X_{12} X_{23} X_{23} }&  &{
  \zetasha\left(X_0 X_1 X_1\right)- \zetasha\left(X_1 X_0 X_0
 \right)}& \findecoeff  \\ \midrule
 {X_{34} X_{45} X_{45}} &  &
 { \zetasha\left(X_0 X_1 X_1\right)- \zetasha\left(X_1 X_0 X_0
 \right)}& \findecoeff  \\ \midrule
 {X_{45} X_{24} X_{24}} &  &{
  \zetasha\left(X_0 X_1 X_1\right)- \zetasha\left(X_1 X_0 X_0
 \right)}& \findecoeff  \\ \midrule
 { X_{12} X_{12} X_{23} }&  &
  { \zetasha\left(X_1 X_1 X_0\right)- \zetasha\left(X_0 X_0 X_1
 \right)}& \findecoeff  \\ \midrule
 {X_{34} X_{34} X_{45}} &  &{
  \zetasha\left(X_1 X_1 X_0\right)- \zetasha\left(X_0 X_0 X_1
 \right)}& \findecoeff  \\ \midrule
 {X_{24} X_{45} X_{45}} &  &{
  \zetasha\left(X_1 X_1 X_0\right)- \zetasha\left(X_1 X_0 X_0
 \right)}& \findecoeff  \\ \midrule
{X_{24} X_{34} X_{34}} &  &\multicolumn{1}{l}{$\ds
 { \zetasha\left(X_0 X_0 X_1\right)+ \zetasha\left(X_0 X_1 X_0
 \right)- \zetasha\left(X_0 X_1 X_1\right)} $}& \\
 & & \multicolumn{1}{r}{$\ds{ + \zetasha\left(X_1 X_0 
 X_0\right)+ \zetasha\left(X_1 X_1 X_0\right)=0}$} &
\findecoeff  \\ \midrule
 {X_{24} X_{45} X_{34} }&  &{
  \zetasha\left(X_1 X_0 X_0\right)+ \zetasha\left(X_1 X_0 X_1
 \right)+ \zetasha\left(X_1 X_1 X_0\right)} & \findecoeff  \\ \midrule
 {X_{24} X_{34} X_{45}} &  &{
  \zetasha\left(X_0 X_1 X_0\right)+2
  \zetasha\left(X_1 X_1 X_0\right) }& \findecoeff  \\
\bottomrule\\
\end{tabularx}


\caption{Explicit set of relations equivalent to \eqref{III} in degree
  $3$ where we already have used the relations $\protect\zsha(X_0^k)=\protect\zsha(X_1^k)=0$.
} 
\label{tabledeg3}
\end{table}
\begin{table}[ht]
\begin{tabularx}{\linewidth}{>{$\ds}c<{$}c>{$\ds}l<{$}}
\toprule
\multicolumn{1}{l}{\tetecoloneun}&\hphantom{test}&
\multicolumn{1}{l}{Dual elements in $V(\m_{0,5})$ } \\
b_4 \in B_4 & &b_4^*=\sum l_{b_4,W}\om_W \\
\toprule
X_{34}X_{24}X_{24} & &[\om_{12}|\om_{24}|\om_{24}] +[\om_{23}|\om_{12}|\om_{24}]
- [\om_{23}|\om_{23}|\om_{24}] + [\om_{23}|\om_{23}|\om_{34}] \\
 & & \multicolumn{1}{r}{$- [\om_{23}|\om_{24}|\om_{24}] 
+ [\om_{23}|\om_{34}|\om_{24}] + [\om_{34}|\om_{24}|\om_{24}]$} \\
\midrule
X_{12}X_{23}X_{23} & & [\om_{12}|\om_{23}|\om_{23}] \\
\midrule
 X_{34}X_{45}X_{45} & &[\om_{34}|\om_{45}|\om_{45}] \\
\midrule
 X_{45}X_{24}X_{24} & &[\om_{12}|\om_{24}|\om_{24}]+[\om_{45}|\om_{24}|\om_{24}]
\\
\midrule
 X_{12}X_{12}X_{23}& &[\om_{12}|\om_{12}|\om_{23}]\\
\midrule
 X_{34}X_{34}X_{45}& &[\om_{34}|\om_{34}|\om_{45}] \\
\midrule
 X_{24}X_{45}X_{45}& &[\om_{12}|\om_{12}|\om_{24}]-[\om_{12}|\om_{24}|\om_{45}]
 +[\om_{24}|\om_{45}|\om_{45}]\\
\midrule
 X_{24}X_{34}X_{34}& &[\om_{12}|\om_{12}|\om_{24}]-[\om_{12}|\om_{23}|\om_{24}]
+[\om_{12}|\om_{23}|\om_{34}]-[\om_{12}|\om_{24}|\om_{34}]\\
& &\multicolumn{1}{r}{$
-[\om_{23}|\om_{12}|\om_{24}]+[\om_{23}|\om_{23}|\om_{24}]
-[\om_{23}|\om_{23}|\om_{34}]+[\om_{23}|\om_{24}|\om_{34}]$}\\
& &\multicolumn{1}{r}{$
-[\om_{23}|\om_{34}|\om_{34}]+[\om_{24}|\om_{34}|\om_{34}]$}\\
\midrule
X_{24}X_{45}X_{34} & &[\om_{12}|\om_{12}|\om_{24}]-[\om_{12}|\om_{23}|\om_{24}]
+[\om_{12}|\om_{23}|\om_{34}]-[\om_{12}|\om_{24}|\om_{34}]\\
& & \multicolumn{1}{r}{$+[\om_{24}|\om_{45}|\om_{34}]$}\\
\midrule
X_{24}X_{34}X_{45}& &[\om_{12}|\om_{12}|\om_{24}]-[\om_{12}|\om_{24}|\om_{45}]
-[\om_{23}|\om_{12}|\om_{24}]+[\om_{23}|\om_{24}|\om_{45}] \\
& &\multicolumn{1}{r}{$
-[\om_{23}|\om_{34}|\om_{45}]+[\om_{24}|\om_{34}|\om_{45}]$}\\
\bottomrule\\
\end{tabularx}

\caption{Correspondence between ten multiplicative generators of weight $3$ in
  $\UB$ and 
  their dual elements in $V(\m_{0,5})$} 
\label{tabledeg3dual}
\end{table}

%
\clearpage
\section{Appendix : algorithm}\label{annexealgo}
\subsection{Comments}The above computations were done using the software
Mathematica because its replacement rules and pattern recognition are very
efficient dealing with words. In this section, the algorithms used to produce the tables from the
previous sections are commented.

The naive algorithms described below were originally intended to
provide help in guessing the family 
of relations \eqref{IIIMZV} given by the pentagon relation. Concentrating our
attention on understanding \eqref{IIIMZV}, proving it and explaining the connection with 
the bar construction on $\m_{0,5}$, the author did not make a particular effort to
improve the algorithms (and their results).

\subsection{Law, relations, and basis} Using Mathematica, we need to define a
new \emph{NonCommutativeMultiply} function which behaves like the desired
multiplicative law for a polynomial algebra with
non-commutative variables. This is done using Mathematica's elementary
operations such as pattern recognition and replacement rules. All the
non-commutative products used in the algorithms below are understood as this new
\emph{NonCommutativeMultiply} function.

In order to write words in $\{X_{12}, X_{23},X_{34},X_{45},X_{51}\}^*$ in the
basis $B_4$, we need to use the relations in $\UB$ and thus to implement the
functions $REl51$ and $Relcom$.
\begin{itemize}
\item The function $Rel51$ writes the letter $X_{51}$ in terms of $X_{23}$, $X_{24}$, $X_{34}$:
\begin{align*}
Rel51 : \quad & X_{51} \longmapsto X_{23}+X_{24}+X_{34}
\end{align*}
\item The function $Relcom$ uses the commutation relations to write a product
  $X_{ij}X_{kl}$ with $X_{12}$ or $X_{23}$ on the right side. It does nothing to
  the product $X_{ij}X_{kl}$ if it is a word in the letters $X_{12}$, $X_{23}$ or if it
  is a word in the letters $X_{34}$, $X_{45}$ and $X_{24}$. Beginning with a
  word in $\axb$ and iterating applications of the function $Relcom$, one obtains
  its decomposition in the basis $B_4$.
\begin{align*}
Relcom :\quad & X_{12}X_{kl} \longmapsto X_{k l}X_{1 2} \quad \mx{for }k=3 \mx{ and
} l=4,\mx{ or } k=4, \mx{ and }l=5 \\
 &X_{23}X_{45} \longmapsto X_{45}X_{23}\\
&X_{1 2}X_{2 4}\longmapsto (X_{2 4}+X_{3 4}+X_{4 5})X_{2 4} -  
         X_{2 4}  (X_{2 4} + X_{3 4} + X_{4 5}) \\
&\hphantom{X_{1 2}X_{2 4}\longmapsto (X_{2 4}+X_{3 4}+X_{4 5})X_{2 4} -  
         X_{2 4}  (X_{2 4}+X}+ X_{2 4}  X_{12} \\
&X_{2 3}  X_{2 4} \longmapsto
          X_{2 4}  X_{3 4} - X_{3 4}  X_{2 4} + X_{2 4}  X_{2 3} \\
&X_{2 3}  X_{3 4} \longmapsto
     X_{3 4}  X_{2 4} - X_{2 4}  X_{3 4} + X_{3 4}  X_{2 3}
\end{align*}
\end{itemize}
Computing up to a fixed weight $n$, we consider a basis restricted to
weight $n$ and less, and we define functions $BX0X1$ and $B_4$ which give respectively 
the list of the corresponding monomials.
\begin{itemize}
\item $BX0X1(n):=\mx{List of words } W \in \ax \mx{ with } |W|\leqs n$.
\item $B_4(n):=\mx{List of words } W=W_1W_2$  with $ W_1\in \axq$, $ W_2\in
  \axud$ and $|W|\leqs n$.
\end{itemize}
Then, for any given $A$ in $\UB$ given as
\[A=\sum_{\substack{W\in \{X_{51},X_{34},X_{45},X_{12},X_{23}\}^*, \\ |W|\leqs n}} a_W W\]
one can write $A$ in the basis $B_4$ by using the function $DecB_4$ below:
\begin{itemize}
\item $DecB_4 :=$
\begin{itemize}
\item $A_1:=Rel51(A)$ and expand $A_1$ as $\sum_{W\in \axb \, |W|\leqs n} b_W W$.
\item Do $A_1:=Relcom(A_1)$ until $A_1=\sum_{W\in B_4 \, |W|\leqs n} c_W W$.
\end{itemize}
\end{itemize}
This function is defined using the build-in function \emph{Expand} and
\emph{Collect} together with the previously defined functions. 
 For later use, we need a function $Deg(A,n)$ that truncates $A$
at weight $n$. 
\subsection{Exponential, associator}
Working up to a fixed weight $n$, we now construct a function
that takes two variables $A$ and $B$ and an integer $n$ as inputs and 
gives as output a general polynomial $\Phi_n(A,B)$ of degree $n$ with formal coefficients
\[
\Phi_n(A,B)=1+\sum_{\substack{W \in \ax, \, \{A,B\}^*\neq \emptyset \\|W|\leqs n}}
(-1)^{\de(W)}Z_{\bar W}W,
\]
where $\bar W$ is obtained from $W$ by sending $A$ to $X_0$ and $B$ to $X_1$.

We also define a non-commutative exponential up to degree $n$
\[
Exp_n(A)=\sum_{0\leqs k\leqs n } \frac{A^k}{k !}.
\]
\subsection{Development of the associator relations}
We detail here how we develop the hexagonal and pentagonal relations. 

In order to develop the hexagonal relation
\[
e^{p*X_0}\Phi_n(X_{\infty},X_0)e^{p*X_{\infty}}
\Phi_n(X_1,X_{\infty})e^{p*X_1}\Phi_n(X_0,X_1)
\]
truncated in degree $n$ and expand in the basis given by the words in $X_0$ and
$X_1$. We proceed as follows:
\begin{enumerate}
\item We compute the successive products keeping only
the terms of weight less or equal to $n$. That is, we compute
\begin{align*} P_1&=Deg(e^{pX_0}\Phi_n(X_{\infty},X_0), n), \\
   P_2&=Deg(P_1e^{pX_{\infty}},n), \\ 
\ldots  & \\
P_6&=Deg(P_5\Phi_n(X_0,X_1),n)
\end{align*}
\item Then, we apply $X_{\infty} \longmapsto -X_0-X_1$ and $p \longmapsto i\pi$ to $P_6$.
\item Finally, we expand the expression
and collect the terms of the sum with respect to the list $BX0X1(n)$ and obtain an
expression
\[
\sum_{W \in \ax, \, |W|\leqs n}a_W W.
\]
\end{enumerate}
The coefficients $a_W$ are expressed as a sum of  products of a rational
coefficient, a power of $i\pi$ and a product of $Z_{U}$ for $U$
in $\ax$. Formally replacing $Z_U$ by
$\zsha(U)$, the set of relations \eqref{IIMZV} is given by 
\[
a_W=0 \qquad (W\neq \emptyset).
\]

Similarly, in order to find the set of relations \eqref{IIIMZV} arising from the
$5$-cycle equation \eqref{III},  we expand the product
\[
Penta=\Phi_n(X_{12},X_{23}) \Phi_n(X_{34},X_{45}) \Phi_n(X_{51},X_{12})
\Phi_n(X_{23},X_{34}) \Phi_n(X_{45},X_{51}),
\]
computing the successive products and keeping only the part of
weight less or equal to $n$ at each step. 

Then, we develop the corresponding expression with the variables $X_{12}$, 
$X_{23}$, $X_{34}$, $X_{45}$, $X_{51}$
in the basis $B_4$, applying the function $DecB_4$ to the expression $Penta$, to
obtain an expression of the form
\[
\sum_{b \in B_4, \, |b|\leqs n}a'_b b.
\]
The coefficients $a'_b$ are a sum of products of $Z_U$ for $U$
in $\ax$. One can formally replace $Z_U$ by $\zsha(U)$ and obtain the set of
relations \eqref{IIIMZV} setting $a'_b=0$ for $b$ not equal to $1$.

\subsection{Using for \eqref{III} the equivalent set of relations given in
  \eqref{IIIMZV}} 
We describe here how to obtain the family of relations \eqref{IIIMZV} up to
degree $n$, that is:
\begin{align*}
\intertext{For any $ b \in B_4$ with $ |b|\leqs n$, $b \neq 1$} \\
\sum_{W \in \axb} l_{b,W}C_{5,W}&=0,
\end{align*}
by first generating the coefficients $C_{5,W}$ and then the coefficient
$l_{b,W}$.
 
In order to compute the coefficients $C_{5,W}$ for any word $W$ in
$\axb$,  we first construct a function $Decw$ that takes a word as input and gives as output
all the possibilities to cut it into five sub-words.
\[
Decw(W):=\mx{List of decomposition }(U_1,\ldots, U_5) \mx{ such that } U_1 \cdots U_5=W.
\]
The function $Decw$ is built inductively by first giving the list of all
decompositions $U_1U_2=W$, then iterating the process on each $U_1$ and so forth.

Then, we implement functions corresponding to the $\pg$ (Definition \ref{pgdef})
by programming the 
behavior on the letters as follow
\begin{align*}
rho(i,X_{12}):= &X_0 \mx{ if } i=1,\, X_1 \mx{ if } i=3 \mx{ and }0  \mx{
  otherwise, } \\
rho(i_,X_{23}):= &X_0 \mx{ if } i=2,3,\, X_1 \mx{ if } i=1,5 \mx{ and }0  \mx{
  otherwise, }\\
 rho(i_,X_{45}):= &X_0 \mx{ if } i=5,\, X_1 \mx{ if } i=2 \mx{ and }0  \mx{
  otherwise, }\\
 rho(i_,X_{34}):= &X_0 \mx{ if } i=2,3\, X_1 \mx{ if } i=4,5 \mx{ and }0  \mx{
  otherwise, }\\
 rho(i_,X_{24}):= &X_0 \mx{ if } i=3,\, X_1 \mx{ if } i=5 \mx{ and }0  \mx{
  otherwise, }\\
\end{align*}
which extends to words. The function $Zrho$ takes as input $i$ and a
word $U_i$ in $\axb$  and gives the coefficient
\[
(-1)^{\de(\pg(U_i))}\zsha({\pg(U_i)}).
\]
\begin{itemize}
\item $Zrho(i, U_i):=$
\begin{itemize}
\item  Do $V=rho(i,U_i)$ and $s=\de(V)$
\item output : $(-1)^{s}\zsha(V)$
\end{itemize}
\end{itemize}

Now, from a decomposition  
\[
 U_1\cdots U_5=W
\]
we can recover the coefficient
\begin{multline*}
(-1)^{\dea(U_1)+\deb(U_2) +\dec(U_3)+ \ded(U_4)+ \dee(U_5)} \\
\zsha(\pa(U_1))
\zsha(\pb(U_2))
\zsha(\pc(U_3)
\zsha(\pd(U_4))
\zsha(\pe(U_5)),
\end{multline*}
that is 
\[
Z(U_1,U_2,U_3,U_4,U_5):= \prod_{i=1}^5 Zrho(i,U_i).
\]

Using functions $Decw$ and $Z$, we now compute the sum over the whole set of
decompositions and obtain a function that 
gives the coefficient $C_{5,W}$:
\[
C5(W):=\sum_{(U_1,\ldots, U_5) \in Decw(W)}Z(U_1,U_2,U_3,U_4,U_5).
\] 


We now compute the $l_{b,W}$ coefficients up to some weight by the following algorithm:
\begin{itemize}
\item Begin with $L:= \mx{List of words }W \in
  \axb$, $|W|\leqs n$ .
\item $L1:=$ for each element in $L$ apply $DecB_4$
\item $L2:=$ for each element in $L1$ replace $\sum_{b \in B_4} l_{b, W} b$ by
  the list of the corresponding $l_{b,W}$.
\item output : $L2$.
\end{itemize}
One can then compute for any $b \in B_4$ with $|b|\leqs n$, $b\neq 1$ 
\[
\sum_{W \in \axb} l_{b,W}C_{5,W}
\]
which is the L.H.S. of  \eqref{IIIMZV}.
\begin{rem}
One could imitate the algorithm that gives $C_{5,W}$ in order to recover the
pentagon relation using the bar construction side of the story. The decomposition
function $Decw $ could be directly reused to cut a bar symbol
$\om_W$ in five pieces. The $rho$ function corresponds to the
implementation of the regularization $\on{Reg}$ on the $u_{ij}$. In order to
recover the pentagon relation from 
\[
\forall b^* \in B^* \qquad \iti_{\gamma}\Reg(b^*,\gamma)=0,
\] 
$B^*$ being a basis of $V(\m_{0,5}$), one will have to implement linearity and
the correspondence between formal bar symbols and their iterated integrals. The
latter should be similar to the function $Zrho$ but one may need to
be careful with possible signs.
\end{rem}
\subsection{Remarks}
The author, having recently discovered the software Sagemath, thinks that it may be
easier to do the computations with Sagemath. This is because Sagemath seems
to work well with non-commutative formal power series and it has large libraries to deal
with words. 

In \cite{mapplebigotte}, M. Bigotte and N.E. Oussous have described a Maple
package to work with non-commutative power series. However, it was not yet
possible to have access to this package when this work began.



\bibliographystyle{amsalpha}
\nocite{}
\bibliography{pentabib}


\end{document}